\documentclass{amsart}
\usepackage{verbatim,amssymb,amsmath,amscd,latexsym,amsbsy,mathrsfs} 
\usepackage{graphicx}
\usepackage[all]{xy}
 \input{xy}
 \xyoption{all}
\usepackage{color}
\usepackage{xparse}
\usepackage{mathtools}
\usepackage[colorlinks = true,
            linkcolor = blue,
            urlcolor  = green,
            citecolor = green,
            anchorcolor = blue]{hyperref}   

\newtheorem{thm}{Theorem}[section]
\newtheorem{cor}[thm]{Corollary}
\newtheorem{df}[thm]{Definition}
\newtheorem{prop}[thm]{Proposition}

\newtheorem{lemma}[thm]{Lemma}
\newtheorem{rmk}[thm]{Remark}

\newtheorem{ex}[thm]{Example}

\DeclareMathOperator*{\rank}{rank}

\DeclareMathOperator*{\DR}{DR }

\DeclareMathOperator*{\parc}{par-c} 
 
\DeclareMathOperator* {\Comp}{Comp}

\newcommand{\sslash}{\mathbin{/\mkern-6mu/}}

\newcommand{\NN}{\mathbb{N}}

\newcommand{\ZZ}{\mathbb{Z}}

\newcommand {\C} {{\mathbb C}}
\newcommand {\R} {{\mathbb R}}
\newcommand {\Z} {{\mathbb Z}}
\newcommand {\Q} {{\mathbb Q}}

\newcommand {\E} {{\mathcal E}}
\newcommand {\dt} {{\bullet}}

\newcommand {\OO} {{\mathcal O}}

\newcommand{\pullback}[1]{{#1}^*}
\newcommand{\pushforward}[1]{{#1}_*}
\newcommand{\res}{\text{Res}}
\newcommand{\Gr}{\text{Gr}}
\newcommand{\parabolic}{(\pushforward{\pi}\tilde V\otimes
  O_Y(\floor{-\alpha N}\tilde D))^G}

\DeclarePairedDelimiter\floor{\lfloor}{\rfloor}

\begin{document}
\title{Vanishing theorems for parabolic Higgs bundles}
 \author{ Donu Arapura}
  \author{Feng Hao}
 \author{ Hongshan Li}
\address{Department of Mathematics\\
  Purdue University\\
  West Lafayette, IN 47907\\
  U.S.A.}

\begin{abstract} 
The main result is a Kodaira  vanishing theorem for semistable
parabolic Higgs bundles with trivial parabolic  Chern classes. This
implies a general semipositivity theorem. This also implies
a Kodaira-Saito vanishing theorem for complex variations of Hodge structure.
\end{abstract}
\maketitle

\section*{Introduction}

Let $X$ be a complex smooth projective variety  and
$D\subset X$ a reduced divisor  with simple normal crossings. In an earlier paper
\cite{arapura}, the first author  reproved (and slightly extended) Saito's Kodaira vanishing theorem
for (complex) polarized variations of Hodge structures on $X-D$ with unipotent
monodromy around $D$, by deducing it from a more general vanishing
theorem for Higgs bundles. In this paper, our goal is to extend
this further to complex polarized variations of Hodge structure without any unipotency
condition.  By work of Mochizuki and Simpson, such a variation determines a parabolic Higgs bundle,
which  consists of a vector bundle $E$ on $X$,  a map $\theta:E\to
\Omega_X^1(\log D)\otimes E$ such that $\theta\wedge \theta=0$ and 
an $\R$-indexed filtration on $E^*\subset E(*D)$ satisfying appropriate conditions.
The first condition tells us that we can form a  ``de Rham'' complex
$$\DR(E,\theta):=E\stackrel{\theta}{\to}\Omega_X^1(\log D)\otimes E\stackrel{\theta}{\to}\Omega_X^2(\log D)\otimes E\stackrel{\theta}{\to}\ldots$$
For parabolic Higgs bundles coming from complex variations, this is just
the Kodaira-Spencer complex. Also in this case, sections of
$E|_{X-D}$ lie in $E^\alpha$ if their norms, with respect to the Hodge
metric, are $O(|f(x)|^{\alpha-\epsilon})$, where $f$ is a local
equation for $D$. The filtration is also related to the 
monodromy about $D$; in particular, it would be trivial  in the
unipotent case, but not otherwise.   One reason for keeping track of
this filtration is that  it enters into natural modifications of Chern
classes et cetera, where the usual formulas need to be corrected along $D$.
These notions will be reviewed in the paper.

 Our main result  (corollary \ref{cor:main}) is as follows:  Given a
slope semistable parabolic Higgs bundle $(E, E^*,\theta)$ with trivial parabolic Chern
classes, and an ample line bundle $L$,
$$\mathbb{H}^i(X, \DR(E,\theta)\otimes L)=0$$
for $i>\dim X$. More generally, theorem~\ref{stablevan} gives an extension to this in the
spirit of the Kawamata-Viehweg and Le Potier vanishing theorems.
In theorem~\ref{thm:semipos}, we show that    if  there is a decomposition $E=E_+\oplus E_-$ 
  such that $\theta(E)\subseteq \Omega_X^1(\log D)\otimes E_-$, then
  $E_+$ is nef. This is deduced using the vanishing theorem.
The assumptions of these results, and therefore their conclusions,  hold for Higgs bundles
arising from complex variations of Hodge structure.   In particular,
we recover the well known semipositivity results of Fujita, Kawamata
and many others.

A special case of the main theorem where the filtration is trivial and
$\theta$ is nilpotent was proved in \cite{arapura}. (This was proved
using characteristic $p$ methods, but a trick used here leads
in princple to a characteristic $0$ proof, when $L$ is an ample line
bundle.  See remark \ref{rmk:reductiontoVHS}.)
The proof of the  more general theorem is by  reducing it to this special case.
The reduction is done in stages, and the sketch is probably clearer if 
we describe the process  in reverse from general to special.
Using an approximation argument, we reduce to the case where the
weights, which are the numbers where  the filtration $E^*$ jumps, are rational.  Yokogawa \cite{yokogawa1} shows that there is a
moduli space of  parabolic Higgs bundles with  fixed rational weights
and vanishing parabolic Chern classes which are semistable in the
appropriate sense. Then using the natural $\C^*$-action on this space and upper
semicontinuity of cohomology, we reduce to the case where $\theta$ is nilpotent.
Again using the rationality of the weights, a result of  Biswas
\cite{biswas} shows there exists a branched covering $\pi:Y\to X$ and a
nilpotent semistable Higgs bundle $(\mathcal{E}, \vartheta)$ on $Y$,
with trivial filtration, such
that $\mathbb{H}^i(X, \DR(E,\theta)\otimes L)$ is a summand of
$\mathbb{H}^i(Y, \DR (\mathcal{E}, \vartheta)\otimes \pi^*L)$.
This  is zero by the special case.

\section{Parabolic bundles}

Let $X$ be a complex smooth projective variety with a reduced simple
normal crossing divisor $D=\sum_{i=1}^n D_i$. Let $j:U=X-D\to X$
denote the inclusion of the complement.  We fix this notation
throughout the paper.  We use the following definition, which is
equivalent to the one given by Maruyama and Yokogawa \cite{my}, although
different notationally.

\begin{df}
  A parabolic sheaf on $(X, D)$ is a torsion free
  $\mathcal{O}_X$-module $E$, together with a decreasing $\R$-indexed
  filtration by coherent subsheaves such that
  \begin{enumerate}
  \item[P1.] $E^0=E$.
  \item[P2.] $E^{\alpha+1} = E^\alpha(-D)$.
  \item[P3.] $E^{\alpha-c} = E^{\alpha}$ for any $0<c\ll 1$.
  \item[P4.] The subset of $\alpha$ such that $Gr^\alpha E\not= 0$ is
    discrete in $\R$. Here
    $Gr^\alpha E:= E^{\alpha}/E^{\alpha+\epsilon}$ for
    $0<\epsilon\ll 1$.
  \end{enumerate}
\end{df}
We refer to the filtration as a {\em parabolic structure}.  The numbers
$\alpha$ such that $Gr^\alpha E\not= 0$ are called {\em weights}. A weight
is normalized if it lies in $[0,1)$. The axioms imply that the ordered
set of positive normalized weights
$0< \alpha_1<\alpha_2<...<\alpha_\ell<1$ together with the reindexed
filtration
$$E=F^0(E)\supsetneq F^1(E)= E^{\alpha_1}\supsetneq  F^2(E)=
E^{\alpha_2}\ldots \supsetneq F^{\ell+1}(E)=E(-D)$$ determines the
whole parabolic structure. We refer to the last filtration as a
{\em quasi-parabolic structure}. Thus a parabolic structure consists of a
quasi-parabolic structure together with a choice of normalized
weights.  In certain situations, we will need to perturb the weights.

\begin{df}
  Given $\epsilon>0$, we say that two parabolic sheaves $E^*$ and
  ${E'}^*$ are $\epsilon$-close if the underlying sheaves with
  quasi-parabolic structures are isomorphic, and the normalized
  weights satisfy $|\alpha_i-\alpha_i'|<\epsilon$.
\end{df}

\begin{df}\label{df:paravect} A parabolic bundle on $(X,D)$ consists of a
  vector bundle $E$ on $X$ with a parabolic structure, such that as a
  filtered bundle, $E$ is Zariski locally a sum of rank one
  bundles. (See the discussion after example \ref{ex:paraline} for
  further explanation).
\end{df}

Many authors  use a weaker definition.
However, we have followed Iyer and Simpson \cite{is} in adopting what they call
a locally abelian parabolic bundle as our definition.   Certain
notions and constructions  given later  (weight vectors, Biswas' correspondence) become more straightforward
with this definition.  We have, however, retained the original scalar
indexing from \cite{my}, which is more convenient for our purposes.

We describe a few basic examples.

\begin{ex}\label{ex:trivial}
  Any vector bundle $E$ can be given a parabolic structure with a
  single weight $0$ and $E^{i}= E(-iD)$. We refer to this
  as the trivial parabolic structure.
\end{ex}

\begin{ex}\label{ex:paraline}
  Given any line bundle $L$ and coefficients $\beta_i \in [0,1)$ for
  each component $D_i$ of $D$, we have the following parabolic line
  bundle
  \begin{equation}
    \label{eq:L}
    L^{\alpha}:= L(\sum -\lfloor 1+ \alpha-\beta_i\rfloor D_i)  
  \end{equation}
  \noindent where $\lfloor\cdot \rfloor$ is the floor function.
\end{ex}

We will see shortly that the weights are exactly the $\beta_i$. We can
assemble these into a vector $(\beta_1,\ldots, \beta_n)\in \R^n$ that
we call the {\em normalized weight vector} for $L^*$.  We can make this
independent of the labeling by viewing it as an element of
$\mathrm{Hom}_{Set}(\Comp(D), \R)$, where $\Comp(D)$ is the set of irreducible
components of $D$.  We can recover the normalized weight vector from
the parabolic structure alone: the $i$th component of the weight
vector is $\beta_i$ if and only if $Gr^{\beta_i}L$ is nonzero at the
generic point of $D_i$.  It follows easily that any parabolic line
bundle is isomorphic to the one above for some unique normalized
weight vector.  Our definition says that Zariski locally a parabolic
bundle is a direct sum of parabolic line bundles.  It would equivalent
to formulate this in the analytic topology. The proof is implicit in
the argument given the first paragraph of \cite[p 361]{is}.

We will determine the weights and quasi-parabolic structure for the
above example. To simplify the notation, reindex the $D_i$, so that
$0\le \beta_1\le \ldots \le \beta_n <1$.  The following is
straightforward.

\begin{lemma}\label{lemma:paraline}
  The set of normalized weights is exactly the set $\{\beta_i\}$. If
  we list the weights union $0$ in increasing order
  $0=\beta_{r_0} <\beta_{r_1}\ldots <\beta_{r_n}$, then
 $$F^i(L)= L(-D_1-\ldots- D_{r_i})$$ 
\end{lemma}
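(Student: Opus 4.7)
The plan is to unwind the defining formula \eqref{eq:L} and read off both the weights and the quasi-parabolic filtration directly. The key step is to study, for each $i$, the integer-valued step function $\alpha \mapsto \lfloor 1 + \alpha - \beta_i \rfloor$; on the range $\alpha \in [0,1)$ this vanishes while $\alpha$ is below $\beta_i$ and equals $1$ once $\alpha$ has reached $\beta_i$, the precise jump point being fixed by the left-continuity condition P3 and the discreteness condition P4. Since $L^\alpha$ is defined termwise from these floor functions, $L^\alpha$ is piecewise constant in $\alpha$, and the set of $\alpha$ for which $Gr^\alpha L^\ast \neq 0$ is exactly $\{\beta_i\}$. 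This gives the first assertion that the normalized weights are precisely the $\beta_i$.

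For the second assertion, I would evaluate the formula at $\alpha = \beta_{r_i}$. Since the sequence $\beta_1 \le \beta_2 \le \cdots \le \beta_n$ is sorted in increasing order, the inequality $\beta_j \le \beta_{r_i}$ is equivalent to $j \le r_i$, and precisely for those $j$ the floor term $\lfloor 1 + \beta_{r_i} - \beta_j \rfloor$ equals $1$ (while all remaining floor terms vanish, since $\beta_j - \beta_{r_i} > 0$ and $\le 1$). Summing these contributions yields
$$L^{\beta_{r_i}} = L\bigl(-D_1 - \cdots - D_{r_i}\bigr),$$
and under the normalization $F^i(L) = L^{\beta_{r_i}}$ this is the claimed formula.

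I do not anticipate any conceptual obstacle; the argument is just a careful unpacking of the definition. The only real bookkeeping lies at the boundary: aligning the side on which each floor function jumps with the left-continuity convention P3, and in particular handling the possibility $\beta_i = 0$ consistently with P1 (which amounts to identifying the label $\beta_{r_0} = 0$ with the starting term of the filtration even when no genuine jump occurs at $\alpha=0$). These are routine, and once they are fixed the two assertions fall out of the same calculation.
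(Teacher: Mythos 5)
The paper offers no written proof (it declares the lemma ``straightforward''), so the only question is whether your unwinding actually closes. It does not: the boundary bookkeeping you defer to the end is the entire content of the lemma, and your two computations silently use opposite conventions for the side on which $\lfloor 1+\alpha-\beta_i\rfloor$ jumps. Taken literally, $\lfloor 1+\alpha-\beta_i\rfloor$ already equals $1$ at $\alpha=\beta_i$ (since $\lfloor 1\rfloor =1$), and this literal reading is exactly what your verification of the second assertion uses (the term $j=r_i$ contributes $-D_{r_i}$ to $L^{\beta_{r_i}}$). But under that reading $L^{\beta_i}=L^{\beta_i+\epsilon}$ while $L^{\beta_i-c}\neq L^{\beta_i}$, so P3 fails at $\alpha=\beta_i$ and
$Gr^{\beta_i}L=L^{\beta_i}/L^{\beta_i+\epsilon}=0$: the filtration drops without producing a nonzero graded piece in the sense the paper defines $Gr^\alpha$, and the first assertion is false. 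If instead you ``fix the jump point by P3,'' as you propose, you are in effect replacing the exponent by $\lceil \alpha-\beta_i\rceil$ (which agrees with $\lfloor 1+\alpha-\beta_i\rfloor$ except when $\alpha-\beta_i\in\Z$), putting the drop just after $\beta_i$; then P1--P4 hold, $Gr^{\beta_i}L\neq 0$, and the first assertion follows --- but now
$L^{\beta_{r_i}}=L\bigl(-\sum_{j:\,\beta_j<\beta_{r_i}}D_j\bigr)=L(-D_1-\cdots-D_{r_{i-1}})$, not $L(-D_1-\cdots-D_{r_i})$, so your computation of the second assertion no longer yields the stated formula.

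The upshot is that no single consistent reading of \eqref{eq:L} gives both halves exactly as you compute them; one half (or the indexing $F^i(L)=L^{\beta_{r_i}}$) must shift by one step. A complete write-up should fix the convention once --- the one compatible with P1--P4 and with $Gr^\alpha=L^\alpha/L^{\alpha+\epsilon}$ is $L^\alpha=L(-\sum_i\lceil\alpha-\beta_i\rceil D_i)$ --- verify the first assertion as you do, and then either establish the reindexed formula $F^{i+1}(L)=L(-D_1-\cdots-D_{r_i})$ (equivalently, that $L^t=L(-D_1-\cdots-D_{r_i})$ for $t$ in the open interval between consecutive weights, which is what the parabolic-degree computation later in the paper actually uses) or explain why the paper's indexing is to be read that way. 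Calling this ``routine'' conceals a genuine sign-of-the-jump discrepancy that your argument does not resolve.
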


We want to extend the notion of normalized weight vectors to parabolic
bundles.  Given a Zariski open $U\subseteq X$, we have a restriction
$\mathrm{Hom}(\Comp(D), \R)\to \mathrm{Hom}(\Comp(U\cap D), \R)$.  Suppose that we are
given a Zariski open cover $\{U_i\}$ of $X$ such that each $E|_{U_i}$
is a sum of parabolic line bundles. We say that $\beta$ is a
{\em normalized weight vector} of $E$ if for each $i$, $\beta|_{U_i}$ is a
normalized weight vector of a line bundle summand of $E|_{U_i}$. This
notion is easily seen to be independent of the cover.

\begin{ex}\label{ex:paraDel} 
  Suppose that $(V_o,\nabla_o)$ is a vector bundle with an integrable
  connection with regular singularities over $U$. By Deligne
  \cite{deligne}, for each $V^\alpha$ there exists a unique extension
$$\nabla^\alpha:V^\alpha\to \Omega_{X}^1(\log D)\otimes V^\alpha$$
with the eigenvalues of the residue
$Res_{D_i}(\nabla^{\alpha})\in \textnormal{End}(V^{\alpha}\otimes
\mathcal{O}_{D_i})$ having real parts in $[ \alpha,1+\alpha)$, for
each irreducible component $D_i$ of $D$. This again forms a parabolic
bundle, that we refer to as the Deligne parabolic bundle.  If the
monodromy of $\nabla_o$ around each component of $D$ is unipotent,
then $V^*$ is trivial.
\end{ex}

\begin{df}
  A parabolic Higgs sheaf or bundle on $(X,D)$ is a parabolic sheaf or
  bundle $E^*$ together with a holomorphic map
$$\theta:E\to \Omega_X^1(\log D)\otimes E$$
such that
$$\theta\wedge \theta=0$$
and
$$\theta(E^\alpha)\subseteq \Omega_X^1(\log D)\otimes E^\alpha$$\\
\end{df}

Natural examples come from variations of Hodge structure. These will
be discussed in more detail in section \ref{sect:nonabelian}.

\section{Biswas' correspondence}\label{sect:biswas}
We will assume in this section that the weights are rational with
denominator dividing a fixed positive integer $N$.  Recall that
Kawamata \cite[theorem 17]{kawamata} has constructed a smooth
projective variety $Y$, and a finite map $\pi:Y\to X$, such
that $\tilde{D}:=(\pi^*D)_{red}$ is a simple normal crossing
divisor, and $\pi^*D_i = k_iN ( \tilde D_i)$ for some integer $k_i>0$,
where $\tilde D_i=(\pi^*D_i)_{red}$.  By construction $\pi:Y\to X$ is
given by a tower of cyclic covers, so it is Galois. 
Let $G$ denote the Galois group.
A {\em $G$-equivariant vector bundle} on $Y$, is a bundle
$p:V\to Y$ (viewed geometrically rather than as a sheaf) on which $G$
acts compatibly with $p$.

We list some basic classes of examples.

\begin{ex}\label{ex:pullbackVB}
  For the above Galois covering $\pi: Y\to X$ with Galois group $G$
  and any vector bundle $V$ over $X$, $\pi^*V$ can be made into a
  $G$-equivariant bundle, so that the projections $p$
$$
\xymatrix{
  \pi^*V\ar[r]\ar[d]^{p} & V\ar[d]^{p} \\
  Y\ar[r]^{\pi} & X }
$$
are compatible with the $G$-action. Fix any point $y\in Y$, then the
action of isotropy subgroup of the point $y$ on the fiber
$(\pi^*V)_{y}$ is trivial.
\end{ex}

\begin{ex}\label{ex:equiLB}
  With the same notation as the above, the line bundle
  $\OO_Y(\tilde D_i)$ has an equivariant structure compatible with the
  one on $\pi^*\OO_X(D_i)$ under the isomorphism
  $\OO_Y(\tilde D_i)^{\otimes k_iN}\cong \pi^* \OO_X(D_i)$.
\end{ex}

There is a Higgs version of $G$-equivariant bundle given by Biswas
\cite{biswas2}.   A {\em $G$-equivariant Higgs bundle} is a pair $(V, \theta)$, with a
  $G$-equivariant bundle $V$ on $Y$ and an equivariant morphism
  $\theta: V\to \Omega_Y^1(\log D)\otimes V$, such that
  $\theta\wedge\theta=0$.
Then we have the following results given by Biswas.

\begin{thm}[Biswas {\cite{biswas}} {\cite[theorem
    5.5]{biswas2}}]\label{thm:biswas} With the notation as above, we
  have the following two equivalences of categories:

  \begin{enumerate}
  \item An equivalence $E^*\mapsto \mathcal{E}$ between the category
    of parabolic bundles on $X$ with weights in $\frac{1}{N}\Z$ and
    $G$-equivariant bundles on $Y$.

  \item An equivalence
    $(E^{*}, \theta)\mapsto (\mathcal{E}, \vartheta)$ between the
    category of parabolic Higgs bundles on $X$ with weights in
    $\frac{1}{N}\Z$ and $G$-equivariant Higgs bundles on $Y$.

  \end{enumerate}

\end{thm}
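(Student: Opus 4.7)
The plan is to construct explicit quasi-inverse functors between the two categories and verify the axioms piece by piece, using the local structure of locally abelian parabolic bundles as the main technical input.

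First I would define the functor sending a $G$-equivariant bundle $\mathcal{E}$ on $Y$ to a parabolic bundle on $X$ by the formula
$$E^\alpha := \pi_*\bigl(\mathcal{E}\otimes \mathcal{O}_Y(-\textstyle\sum_i \lfloor \alpha k_i N\rfloor \tilde D_i)\bigr)^G.$$
Axioms P1--P4 then reduce to routine verifications: P1 is tautological, P2 uses the identification $\pi^*\mathcal{O}_X(-D_i)=\mathcal{O}_Y(-k_iN\tilde D_i)$ together with the projection formula, and the floor function forces P3 and P4 with jumps only at rationals whose denominator divides $N$. To see that the underlying sheaf is a locally abelian parabolic bundle, I would reduce to a local model on a polydisk on which $\pi$ is a product of cyclic covers given by coordinatewise power maps, decompose $\mathcal{E}$ into isotypical components for the local cyclic stabilizer at a generic point of each $\tilde D_i$, and identify each isotypical summand with $\pi^*$ of a parabolic line bundle of the form in example~\ref{ex:paraline}.

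Second I would build the quasi-inverse. Given a parabolic bundle $E^*$ with weights in $\frac{1}{N}\Z$, the hypothesis gives a local splitting into parabolic line bundles; a local line bundle summand $L$ with weight vector $(b_1/N,\ldots,b_n/N)$ is sent to the $G$-equivariant line bundle
$$\mathcal{L} := \pi^*L\otimes \mathcal{O}_Y\bigl(\textstyle\sum_i b_ik_i\tilde D_i\bigr),$$
with equivariant structure inherited from example~\ref{ex:equiLB}. The main obstacle is showing that these local assignments patch into a global $G$-equivariant bundle. This is exactly where the locally abelian hypothesis is essential: two local splittings of the same quasi-parabolic bundle differ by an automorphism preserving the quasi-parabolic filtration, and such an automorphism transports, under the local recipe, to a $G$-equivariant automorphism on $Y$. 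Hence the locally constructed equivariant bundles glue canonically. Once gluing is established, the two compositions are quasi-inverse: this can be checked locally, where everything reduces to a single parabolic line bundle and lemma~\ref{lemma:paraline} makes both directions explicit.

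Finally, the Higgs version follows by transporting $\theta$ across the natural isomorphism $\pi^*\Omega_X^1(\log D)\cong \Omega_Y^1(\log \tilde D)$, which holds because $\pi$ is ramified only along $D$ and the local generator $dz_i/z_i$ maps to $k_iN\,dw_i/w_i$, still a generator of the target. Equivariance of the resulting $\vartheta:\mathcal{E}\to \Omega_Y^1(\log \tilde D)\otimes \mathcal{E}$ is automatic from the construction, the compatibility with the parabolic filtration on $E^*$ corresponds precisely to equivariance on $Y$, and the identity $\theta\wedge\theta=0$ transports to $\vartheta\wedge\vartheta=0$. Throughout, the serious step is the global gluing in the parabolic-to-equivariant direction; everything else is a combination of the projection formula, explicit local computation on polydiscs, and Galois descent.
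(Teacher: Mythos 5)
You should first know that the paper does not prove this theorem: it is quoted from Biswas \cite{biswas}, \cite[theorem 5.5]{biswas2}, and the text only recalls the equivariant-to-parabolic construction \eqref{eq:Ealpha}. Your outline does reproduce the architecture of Biswas' argument (invariant pushforward with a floor twist one way; local pullback-and-twist of line bundle summands, glued via the locally abelian hypothesis, the other way; transport of $\theta$ through $\pi^*\Omega^1_X(\log D)\cong\Omega^1_Y(\log\tilde D)$), but two points need repair. The first is concrete: your formula $E^\alpha=\pi_*(\mathcal{E}\otimes\mathcal{O}_Y(-\sum_i\lfloor\alpha k_iN\rfloor\tilde D_i))^G$ is not \eqref{eq:Ealpha}. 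Since $\lfloor -x\rfloor=-\lceil x\rceil\neq-\lfloor x\rfloor$ for non-integral $x$, your twist is right-continuous in $\alpha$, so axiom P3 fails exactly at the jumps (one gets $E^{\alpha-c}\supsetneq E^{\alpha}$ when $\alpha k_iN\in\Z$); the correct coefficient is $\lfloor-\alpha k_iN\rfloor$.

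The second and more serious issue is that the step you yourself flag as ``the serious step'' --- gluing the locally defined equivariant bundles --- is asserted rather than carried out. What must be checked is that the local recipe $L\mapsto\pi^*L(\sum_i b_ik_i\tilde D_i)$ is functorial on \emph{all} morphisms of parabolic bundles between direct sums of parabolic line bundles, not merely that change-of-splitting automorphisms are sent ``somewhere equivariant.'' Concretely, a component $L\to M$ of such a morphism whose source weight $b_i/N$ exceeds the target weight $c_i/N$ along $D_i$ must vanish along $D_i$ because it preserves the parabolic filtration, and that vanishing is exactly what allows $\pi^*L(b_ik_i\tilde D_i)\to\pi^*M(c_ik_i\tilde D_i)$ to extend across $\tilde D_i$; without this computation the transition maps on overlaps are undefined and the cocycle condition is not established. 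The same Hom computation, together with the identification of $G$-invariant homomorphisms on $Y$ with filtration-preserving homomorphisms on $X$, is what yields full faithfulness --- which your sketch never addresses, although it is half of the assertion ``equivalence of categories.'' Supplying this local Hom calculation would simultaneously fix the gluing and the faithfulness, and would bring your argument in line with the proof in \cite{biswas} and \cite{is}.
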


We recall the construction in one direction for (1). Given an
$G$-equivariant bundle $\mathcal{E}$ on $Y$, we obtain a parabolic
bundle
\begin{equation}\label{eq:Ealpha}
  E^\alpha = (\pi_*(\mathcal{E}\otimes \OO_Y(\lfloor -\alpha\cdot \pi^*D\rfloor))^G
\end{equation}
where
$ \lfloor -\alpha\cdot \pi^*D\rfloor=\sum_i \lfloor
-\alpha_i k_iN\rfloor \tilde D_i$.

Suppose that $(V_o,\nabla_o)$ is a vector bundle with connection
satisfying the assumptions of example \ref{ex:paraDel}. In addition,
suppose that the eigenvalues of the monodromy around $D$ are $N$th
roots of unity. Then the weights of the Deligne parabolic bundle lie
in $\frac{1}{N}\Z$.  Furthermore
$(\tilde V_o,\Box_o):=(\pi^*V_o,\pi^*\nabla_o)$ has unipotent local
monodromies.  Let $(V,\nabla)$ and $(\tilde V,\Box )$ denote
Deligne's extensions of $V_o$ and $\tilde V_o$.  The functoriallity of
this construction \cite[proposition 5.4]{deligne}, shows that $\tilde V$ is
equivariant.

\begin{lemma}
  Biswas' construction applied to $\tilde V$ yields $V^*$.
\end{lemma}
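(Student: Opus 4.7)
The plan is to verify the identity
\[
V^\alpha \;=\; \bigl(\pi_*(\tilde V\otimes\OO_Y(\lfloor -\alpha\cdot\pi^*D\rfloor))\bigr)^G
\]
for every $\alpha\in\R$ by applying Deligne's uniqueness characterization of $V^\alpha$. Both sides are coherent sheaves on $X$ that restrict to $V_o$ on $U$ (for the right-hand side this uses Galois descent and triviality of the twist on $Y\setminus\tilde D$), so it is enough to show that the right-hand side is locally free and carries a logarithmic connection extending $\nabla_o$ whose residue at each $D_i$ has eigenvalues with real parts in $[\alpha,1+\alpha)$.

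First, I observe that $\tilde V$ carries a $G$-equivariant logarithmic connection $\Box$, and each equivariant line bundle $\OO_Y(\tilde D_i)$ of Example~\ref{ex:equiLB} carries a canonical equivariant logarithmic connection; hence $\tilde V\otimes\OO_Y(\lfloor -\alpha\cdot\pi^*D\rfloor)$ inherits one. Pushing forward and taking $G$-invariants yields a logarithmic connection on the right-hand side that restricts to $\nabla_o$ on $U$. Local freeness of the invariants is standard: near a point of $\tilde D$, the local Galois group acts as a product of cyclic groups by roots of unity on coordinates $s_i$ with $s_i^{k_iN}=t_i$, and invariants of a locally free sheaf under such an action are locally free.

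The residue calculation is the heart of the matter, and I would carry it out analytically locally near a point $p\in D$. After choosing commuting residues, $V_o$ decomposes into generalized eigenspaces $\bigoplus_j V_{o,j}$ with eigenvalues $\gamma_{i,j}\in[0,1)\cap\tfrac{1}{N}\Z$ at $D_i$. This decomposition is preserved by pullback and yields a $G$-equivariant splitting of $\tilde V=\bigoplus_j\tilde V_j$. On the $j$-th summand, the residue of $\pi^*\nabla^0$ at $\tilde D_i$ equals $k_iN\gamma_{i,j}\in\Z$, so Deligne's uniqueness applied upstairs forces
\[
\tilde V_j \;=\; \pi^*V^0_j\otimes\OO_Y\bigl(\textstyle\sum_i k_iN\gamma_{i,j}\,\tilde D_i\bigr),
\]
where $V^0_j$ denotes the corresponding summand of the Deligne extension $V^0$ of $V_o$.

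By the projection formula, the $j$-th summand of the right-hand side becomes $V^0_j\otimes(\pi_*\OO_Y(\sum_i a_i\tilde D_i))^G$ with $a_i=k_iN\gamma_{i,j}+\lfloor-\alpha k_iN\rfloor$. A direct computation in the local model gives $(\pi_*\OO_Y(\sum_i a_i\tilde D_i))^G=\OO_X(\sum_i\lfloor a_i/(k_iN)\rfloor D_i)$, and the elementary identity $\lfloor\lfloor xm\rfloor/m\rfloor=\lfloor x\rfloor$ for positive integer $m$ simplifies the exponent to $\lfloor\gamma_{i,j}-\alpha\rfloor=-\lceil\alpha-\gamma_{i,j}\rceil$. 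Thus the $j$-th summand equals $V^0_j\otimes\OO_X(-\sum_i\lceil\alpha-\gamma_{i,j}\rceil D_i)$, which is exactly the corresponding summand of $V^\alpha$; summing over $j$ recovers the identity. The main obstacle is the bookkeeping of floors, ceilings, and equivariant structures, but once verified locally, globalization is automatic since both sides are characterized by the same local residue condition.
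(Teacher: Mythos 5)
Your argument is correct, and it actually proves more than the paper does: the paper explicitly only \emph{outlines} the proof in the case $\dim X=1$ and $\operatorname{rank}V=1$, whereas you handle arbitrary dimension and rank. The core computation is nevertheless the same in both treatments: identify $\tilde V$ with a twist $\pi^*V^0\otimes\OO_Y(\sum_i k_iN\gamma_i\tilde D_i)$ of the pullback of the weight-zero Deligne lattice, then compute the $G$-invariant direct image of the further twist by $\OO_Y(\lfloor-\alpha\,\pi^*D\rfloor)$. Where the paper finishes by exhibiting the local frame $y^{j-\lfloor-\alpha N\rfloor}s$ and checking that the residue $(j-\lfloor-\alpha N\rfloor)/N$ lands in $[\alpha,1+\alpha)$, you finish by the projection formula, the identity $(\pi_*\OO_Y(\sum a_i\tilde D_i))^G=\OO_X(\sum\lfloor a_i/(k_iN)\rfloor D_i)$, and the floor identity $\lfloor\lfloor xm\rfloor/m\rfloor=\lfloor x\rfloor$, matching the result sheaf-theoretically with $V^0\otimes\OO_X(-\sum\lceil\alpha-\gamma_i\rceil D_i)=V^\alpha$; these are two equivalent ways of invoking Deligne's uniqueness, and your floor-function bookkeeping checks out. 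Two points deserve an explicit word of justification in a final write-up. First, the decomposition $V_o=\bigoplus_j V_{o,j}$ by joint generalized eigenspaces of the commuting local monodromies is only a \emph{local analytic} decomposition near a point of $D$, and you need the standard local structure theorem for regular singular connections to know that the Deligne lattices $V^\alpha$ and $\tilde V$ respect it; since both sides of your identity are globally defined subsheaves of $j_*V_o$, local agreement does suffice, as you say. Second, before applying the projection formula you should note that the equivariant structure on $\tilde V_j$ coming from functoriality of Deligne's extension agrees with the product of the canonical equivariant structures on $\pi^*V^0_j$ and on $\OO_Y(\sum_i k_iN\gamma_{i,j}\tilde D_i)$; this follows because the two structures agree over $Y-\tilde D$ and an isomorphism of the Deligne lattices is determined by its restriction there. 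Neither point is a gap, just bookkeeping to be recorded.
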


\begin{proof}
  Since we will not need this result, we will merely outline the proof
  when $\dim X=1$.  Working locally, we may assume that $V^*$ is
  a line bundle on $X$.  Suppose the residue of $\nabla$ at $D$ is
  given by $\beta$. Let
  \[
    \pi: Y \rightarrow X
  \]
  by the cyclic cover of degree $N$ branched over $D$ such that
  $\beta \in \frac{1}{N}\ZZ$.  Then, $\pi$ is locally given by
  $y^N = x$, where $x$ and $y$ are local coordinates defined on
  coordinate neighborhoods $U\subset X$ and $W\subset Y$.  We assume
  that $D$ and $\tilde D = (\pullback{\pi}D)_{\text{red}}$ are defined
  by $x=0$ and $y=0$. Let $G\cong \Z/N\Z$ be the Galois group of
  $\pi$, and let $\mu$ denote a generator.

  Let $e$ be a local frame of $V^0$ such that $\nabla^0$ is given by
  the connection matrix
  \[
    \beta\frac{dx}{x}
  \]
  Let $j \in \NN$ be an integer such that
  \[
    \frac{j}{N} = \beta
  \]
  Then, the connection matrix of $\pullback{\pi}V^0$ locally on $W$
  will be given by
  \[
    j\frac{dy}{y}
  \]
  with respect to the frame $s = \pullback{\pi}e$.

  Let $W^* = W - \tilde D$. We have an inclusion of bundles on
  $Y - \tilde D$
  \[
    \phi : \pullback{\pi}V_o \hookrightarrow \tilde V|_{Y-\tilde D}
  \]
  which extends to an isomorphism
  \[
    \phi: \pullback{\pi}V^0(j\tilde D) \rightarrow \tilde V
  \]
  Locally on $W$, it is given by
  \begin{align*}
    \phi: O_W\cdot s & \rightarrow O_W \cdot s \\
    y^{-j}s & \mapsto s
  \end{align*}  
  Now, $\pullback{\pi}V^*(j\tilde D)$ has a natural $G$-action, and the
  above isomorphism respect this action.  Locally on $W$, the action
  of $G$ on $\tilde V$ can be described as
  \[
    \mu\cdot s = \mu^{-j}\times s
  \]

  We claim that
  \[
    (\pushforward{\pi}\tilde V\otimes O_Y(\floor{-\alpha
      N}\tilde D))^G
  \]
  is the extension of $V_o$ whose residue lies in
  $[\alpha, 1 {+}\alpha)$.

  Since $(\pushforward{\pi}\pullback{\pi}V_o)^G = V_o$, We have the
  natural inclusion
  \[
    V_o \hookrightarrow (\pushforward{\pi}\tilde V)^G
  \]
  on $X - D$.

  Next, let $\nabla^{\alpha}$ be the connection on $\parabolic$, we
  compute
  $
    \res^{D}\nabla^{\alpha}
  $.
  Locally on $U$, $y^{j-\floor{-\alpha N}}s$ is a basis
  for $(\pushforward{\pi}\tilde V)^G$. So
  \[
    \res_{D}\nabla^{\alpha} = \frac{j - \floor{-\alpha
        N}}{N}
  \]
  which lies in $[\alpha, 1 {+} \alpha)$, as claimed. This
  proves the lemma.

\end{proof}

\section{Parabolic Chern classes}\label{sect:chern}

We  give a quick definiton of parabolic Chern classes.  Given the
polynomial
$$\prod_1^r (1+t(x_i+y_i))$$
we may write the coefficient of $t^k$ as a polynomial
$P_k(s_1,\ldots, s_r, y_1\ldots, y_r)$ in the elementary symmetric
polynomials $s_i= s_i(x_1,\ldots, x_r)$ and the remaining variables
$y_j$. Given a rank $r$ parabolic bundle $E^*$ with normalized weight
vectors
$\alpha^{(1)}=(\alpha^{(1)}_i),\ldots, \alpha^{(r)}
=(\alpha^{(r)}_i)$, we define the parabolic Chern class, in real cohomology, by
$${\parc}_k(E^*) =  P_k(c_1(E),\ldots c_r(E), \sum
\alpha^{(1)}_i[D_i],\ldots, \sum \alpha^{(r)}_i[D_i])
$$
We can unpack this formula with the help of the splitting
principle. For a parabolic line bundle $L^*$ with notation as in
example \ref{ex:paraline}, the weights of $L^*$ in the interval
$[0,1)$ are $(\beta_i)$. Then we see that the parabolic first Chern
class of $L^*$ is
\begin{equation}
  \label{eq:parc1}
  {\parc}_1(L^*)= c_1(L)+
  \sum_i\beta_i[D_i]. 
\end{equation}
Given a parabolic bundle $E^*$ of rank $r$, let $p:Fl(E)\to X$ denote
the full flag bundle of $E$. The pullback $p^*E$ carries a filtration
$F^i\subset E$ by subbundles such that associated graded
$G_i = F^i/F^{i+1}$ are line bundles. The parabolic structure on $E$
can be pulled back to a parabolic structure on $p^*E$ along $p^*D$,
and each $G_i$ carries the induced parabolic structure.  One sees from
above that:
\begin{lemma}\label{lemma:split}
  The parabolic Chern classes satisfy
$$1+ \sum p^*{\parc}_i(E^*)= \prod_i (1+\textnormal{par-}c_1(G_i^*))$$
\end{lemma}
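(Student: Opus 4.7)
The plan is to apply the classical splitting principle for Chern classes, combined with an analysis of how the parabolic structure on $p^*E^*$ descends to each graded line bundle $G_i$.

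First I would view $P_k$ as the $k$th elementary symmetric polynomial in the ``parabolic Chern roots'' $\xi_i := x_i + y_i$, which gives the formal generating-function identity
$$1 + \sum_k \parc_k(E^*)\,t^k \;=\; \prod_{i=1}^r (1 + t\,\xi_i).$$
By the classical splitting principle, $p^*$ realizes the formal Chern roots $x_i$ as the actual classes $c_1(G_i)$ in $H^*(Fl(E))$. Therefore $p^*\xi_i = c_1(G_i) + y_i$, and by formula~\eqref{eq:parc1} this equals $\parc_1(G_i^*)$ precisely when the normalized weight vector of $G_i^*$ is $\alpha^{(i)}$. Once this matching is established, the lemma follows by specializing the identity above to $t=1$ and comparing with $\prod_i(1 + \parc_1(G_i^*))$.

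The main content of the proof is thus to show: after suitable relabeling, each $G_i$ carries an induced parabolic line bundle structure with normalized weight vector $\alpha^{(i)}$; equivalently, the multiset of weight vectors $\{w(G_1^*), \ldots, w(G_r^*)\}$ coincides with $\{\alpha^{(1)}, \ldots, \alpha^{(r)}\}$. I would verify this using Definition~\ref{df:paravect}: on a Zariski open cover $\{U_\lambda\}$ of $X$, the bundle $E^*|_{U_\lambda}$ decomposes as a direct sum of parabolic line bundles $L_{\lambda,j}^*$ whose weight vectors realize the multiset of $\alpha^{(j)}|_{U_\lambda}$. Pulling back to $p^{-1}(U_\lambda)$, $p^*E^*$ is again a direct sum of parabolic line bundles, though the tautological flag $F^\bullet$ need not be adapted to this splitting. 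A linear-algebra calculation on the filtered fibers at the generic points of each component of $p^*D_j$ shows that the graded pieces of any complete flag in a direct sum of parabolic line bundles inherit parabolic line bundle structures whose weight vectors form the same multiset as the summands; the rank-two model where a ``diagonal'' subbundle of $L_1^*\oplus L_2^*$ inherits the smaller weight while the quotient inherits the larger is representative of the general case.

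The main obstacle is this linear-algebra step, asserting that non-adapted flags in direct sums of parabolic line bundles preserve the multiset of weight vectors across graded pieces. The locally abelian hypothesis in Definition~\ref{df:paravect} is essential here, since subquotients of arbitrary parabolic bundles need not inherit clean parabolic line bundle structures; local abelianness is what ensures the induced structures on the $G_i$ are parabolic line bundles at all, globally over $Fl(E)$.
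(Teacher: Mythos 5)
The paper itself offers no argument for this lemma beyond ``one sees from above,'' so you are right that the real content lies in identifying the induced parabolic structures on the $G_i$. However, your load-bearing step --- that the graded pieces of the tautological flag inherit parabolic line bundle structures whose \emph{weight vectors} form the same multiset as the normalized weight vectors of $E^*$ --- fails as soon as $D$ has more than one component. What your linear-algebra computation actually establishes (correctly, by the standard bigraded dimension count for a pair of filtrations on a vector space) is the statement one component at a time: along each $D_j$, the multiset of weights of $G_1,\dots,G_r$ agrees with that of $E$. But at the generic point of $p^{-1}(D_j)$ the tautological flag is in general position relative to the parabolic filtration of $E|_{D_j}$, so $G_i$ receives the $i$-th \emph{smallest} weight along $D_j$, and this happens for every $j$ simultaneously: the weight vectors of the $G_i^*$ are the coordinatewise-sorted ones. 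Concretely, take $E=\mathcal{O}_X\oplus\mathcal{O}_X$ on a surface with $D=D_1+D_2$, the first summand with weight vector $(\beta,0)$ and the second with $(0,\beta)$. The normalized weight vectors of $E^*$ are $(\beta,0)$ and $(0,\beta)$, whereas $G_1^*$ and $G_2^*$ acquire $(0,0)$ and $(\beta,\beta)$, a different multiset.

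This is not a removable technicality, because the product is sensitive to how Chern roots are paired with weight vectors: in the example above, the degree-two part of $\prod_i(1+{\parc}_1(G_i^*))$ is $-\beta\, h\, p^*([D_1]+[D_2])$, where $h$ is the relative hyperplane class, while lemma \ref{lemma:Chern} forces $p^*{\parc}_2(E^*)=\beta^2\, p^*([D_1]\cdot[D_2])$, and these classes differ in $H^4(Fl(E),\R)$. The root of the difficulty is that $\prod_i(1+t(x_i+y_i))$ is \emph{not} a function of the elementary symmetric polynomials $s_j(x)$ and the $y_j$ separately (already $e_2$ involves $\sum_i x_iy_i$), so both the lemma and the definition of ${\parc}_k$ require a specified pairing of Chern roots with weight vectors; neither the multiset statement you aim for, nor a generic-position analysis of the tautological flag, supplies the correct pairing. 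A proof that closes the gap must work with the pairs directly, e.g.\ via the local decomposition $E\cong\bigoplus_j L_j^*$ of definition \ref{df:paravect} and the roots $c_1(L_j)+\sum_k\beta_{jk}[D_k]$ (the Iyer--Simpson parabolic Chern character), or by passing to Biswas' covering where the ordinary splitting principle applies to $\mathcal{E}$. As written, your argument does not do this --- though in fairness the paper's one-line justification does not either.
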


\begin{lemma}[Biswas]\label{lemma:Chern}
  Given any parabolic vector bundle $E^*$ with weights in
  $\frac{1}{N}\Z$, let $\pi:Y\to X$ and $\E$ be the $G$-equivariant
  bundle corresponding to $E^*$ as in theorem \ref{thm:biswas}.  Then
$$\pi^*{\parc}_i(E^*)=c_i(\E)$$

\end{lemma}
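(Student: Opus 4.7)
The natural strategy is the splitting principle plus a direct first-Chern-class computation for parabolic line bundles. First I would set up flag bundles on both sides. Let $p:Fl(E)\to X$ be the full flag bundle, carrying $p^*E^*$ as a parabolic bundle whose successive line-bundle quotients $G_i^*$ are described in lemma~\ref{lemma:split}. On the $Y$-side, form the fiber product $\widetilde{Fl}:=Y\times_X Fl(E)$ with projections $\tilde p:\widetilde{Fl}\to Y$ and $\tilde\pi:\widetilde{Fl}\to Fl(E)$; since the flag construction commutes with base change, $\widetilde{Fl}\cong Fl(\mathcal{E})$ as $G$-varieties, so $\tilde p^*\mathcal{E}$ has a $G$-equivariant filtration by subbundles with line-bundle graded pieces $\mathcal{H}_i$.

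The next, and technically most delicate, step is to check that Biswas' correspondence applied on $Fl(E)$ with the cover $\tilde\pi$ sends $p^*(G_i^*)$ to $\mathcal{H}_i$. For this I would argue that Biswas' functor commutes with pullback along $p$: formula~\eqref{eq:Ealpha} is compatible with flat base change, and the diagram with $\pi$ and $\tilde\pi$ is cartesian, so the parabolic bundle $p^*E^*$ corresponds to $\tilde p^*\mathcal{E}$. Because the correspondence in theorem~\ref{thm:biswas} is an equivalence of categories, it is exact, and hence the graded pieces of corresponding filtrations correspond. Granting this, and granting the identity $c_1(\mathcal{H}_i)=\tilde\pi^*\mathrm{par}\text{-}c_1(G_i^*)$ in the line-bundle case, the ordinary splitting formula $\tilde p^*c(\mathcal{E})=\prod_i(1+c_1(\mathcal{H}_i))$ combined with lemma~\ref{lemma:split} (pulled back by $\tilde\pi$) and the identity $\tilde\pi^*p^*=\tilde p^*\pi^*$ yields $\tilde p^*c_k(\mathcal{E})=\tilde p^*\pi^*\mathrm{par}\text{-}c_k(E^*)$. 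Injectivity of $\tilde p^*$ on cohomology finishes the reduction.

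It remains to verify the lemma for a parabolic line bundle $L^*$ of the form \eqref{eq:L} with normalized weight vector $(\beta_i)\in\tfrac{1}{N}\Z^n$. Using \eqref{eq:Ealpha} together with the equivariant identification $\mathcal{O}_Y(\tilde D_i)^{\otimes k_iN}\cong\pi^*\mathcal{O}_X(D_i)$ from example~\ref{ex:equiLB}, I would identify the corresponding equivariant line bundle as $\mathcal{L}\cong\pi^*L\otimes\mathcal{O}_Y\!\bigl(\sum_i\beta_i N k_i\,\tilde D_i\bigr)$. Taking first Chern classes and using $\pi^*[D_i]=k_iN[\tilde D_i]$ gives
$$c_1(\mathcal{L})=\pi^*c_1(L)+\sum_i\beta_i\,\pi^*[D_i]=\pi^*\!\left(c_1(L)+\sum_i\beta_i[D_i]\right)=\pi^*\mathrm{par}\text{-}c_1(L^*),$$
where the last equality is \eqref{eq:parc1}. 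The main obstacle is the base-change compatibility in the first paragraph; once that is in hand, everything else is a mechanical application of the splitting principle and the explicit line-bundle computation.
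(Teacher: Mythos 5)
Your proposal is correct and follows essentially the same route as the paper: reduce to the line-bundle case via the flag bundle, lemma~\ref{lemma:split}, and injectivity of pullback on cohomology, then compute $c_1(\mathcal{L})=\pi^*c_1(L)+\sum_i\beta_ik_iN[\tilde D_i]$ (the paper simply cites Biswas's formula (3.11) here, whereas you derive it from \eqref{eq:Ealpha} and example~\ref{ex:equiLB}, and you also make explicit the base-change compatibility of the correspondence that the paper leaves implicit). One small slip that does not affect the argument: $\widetilde{Fl}=Y\times_X Fl(E)$ is $Fl(\pi^*E)$, not $Fl(\mathcal{E})$ (these bundles differ along $\tilde D$), but all you actually need is that $\tilde p^*\mathcal{E}$ acquires a $G$-equivariant filtration with line-bundle quotients corresponding to the $p^*G_i^*$, which your base-change argument already supplies.
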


\begin{proof}
  We can use lemma \ref{lemma:split} and the injectivity of the map
  $H^*(X, \R)\to H^*(Fl(E), \R)$ to reduce this to the case where
  $i=1$ and $E^*=L^*$ is a line bundle. Let us use $\mathcal{L}$
  instead of $\E$.  Then
  $$\pi^*
  {\parc}_1(L^*)=c_1(\pi^*L)+\sum_i\beta_i[\pi^*D_i]=c_1(\pi^*L)+\sum_ik_i\beta_iN[\tilde{D_i}].$$
  By Biswas \cite[(3.11)]{biswas},
  $$c_1(\mathcal{L})=c_1(\pi^*L)+\sum_i\beta_ik_iN[\tilde{D_i}]=\pi^*
  \textnormal{par-}c_1(L^*).$$
\end{proof}

\begin{lemma}\label{lemma:QapproxCh}
  Given $\epsilon>0$ and a parabolic bundle $E^*$ with trivial
  parabolic Chern classes, there exists a parabolic bundle ${E'}^*$
  with trivial parabolic Chern classes and rational weights which is
  $\epsilon$-close to $E^*$.
\end{lemma}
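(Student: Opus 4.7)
The parabolic Chern classes $\parc_k(E^*)$ are polynomial functions of the normalized weights $\alpha=(\alpha_1,\ldots,\alpha_\ell)$ with coefficients in $H^*(X,\Q)$: the defining formula $\parc_k(E^*) = P_k(c_1(E),\ldots,c_r(E),\sum_i\alpha^{(1)}_i[D_i],\ldots,\sum_i\alpha^{(r)}_i[D_i])$ involves only the rational classes $c_i(E)$, $[D_i]$ together with coefficients linear in the weight components $\alpha^{(j)}_i$, and the $\alpha^{(j)}_i$ are themselves determined by $(\alpha_1,\ldots,\alpha_\ell)$ and the fixed quasi-parabolic structure. Fixing a $\Q$-basis of $H^*(X,\Q)$, the condition $\parc_k(E^*)=0$ for all $k$ translates into a finite system of polynomial equations with rational coefficients in $\alpha$, cutting out a real algebraic variety $V\subset\R^\ell$ defined over $\Q$ and containing the given weight $\alpha$. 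The lemma thus reduces to the assertion that $V\cap\Q^\ell$ is dense in $V$ near $\alpha$.

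My plan is to analyze $V$ via the splitting principle. On the flag bundle $p:Fl(E)\to X$ the pullback $p^*E^*$ splits as a sum of parabolic line bundles $G_j^*$ with parabolic first Chern classes $\xi_j=c_1(G_j)+\sum_i\alpha^{(j)}_i[p^*D_i]$ that depend linearly on $\alpha$. By Lemma \ref{lemma:split}, $\parc_k(E^*)=0$ for all $k$ is equivalent to $\prod_{j=1}^r(1+\xi_j)=1$ in $H^*(Fl(E),\R)$, i.e.\ to the vanishing of all elementary symmetric functions $e_k(\xi_1,\ldots,\xi_r)$ for $k=1,\ldots,r$. So $V$ is the preimage, under the $\Q$-linear map $\R^\ell\to H^2(Fl(E),\R)^{\oplus r}$ sending $\alpha$ to $(\xi_j)$, of a specific symmetric-function locus. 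In particular the degree-one condition $\parc_1(E^*)=0$ already defines a rational affine subspace of $\R^\ell$ on which rational points are automatically dense; this gives the argument's base case.

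The remaining step is to extend rational density through the higher-degree strata cut out by $e_k=0$ for $k\ge 2$. I would induct on the number of distinct weights $\ell$. The base case $\ell=1$ is immediate: either the single weight is forced by $\parc_1=0$ to be the ratio of two rational cohomology classes and is hence already rational, or else it is unconstrained by $\parc_1=0$ and the higher $\parc_k$ can be handled by direct perturbation. For the inductive step, I would fix the last weight, apply the inductive hypothesis on a suitable slice with the other $\ell-1$ weights varying, and then exploit the flexibility in the rational kernel of the $\Q$-linear map $\alpha\mapsto(\xi_j)$ to rationally approximate the remaining weight while preserving all the $e_k=0$ constraints. The main obstacle I anticipate is this density step, since arbitrary real $\Q$-algebraic varieties need not have dense rational points in their real locus; the resolution must exploit the explicit factorization of $V$ as the preimage of a symmetric-function locus under a $\Q$-linear map with sufficiently large rational kernel.
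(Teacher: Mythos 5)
You have set the problem up correctly and located the difficulty correctly, but the proposal does not contain a proof: everything past the line-bundle/degree-one case is a plan rather than an argument. The induction on the number of weights would not go through as described --- once you fix one weight, the slice of $V$ on which the remaining weights vary is still cut out by the nonlinear equations $e_k(\xi)=0$ for $k\ge 2$, and that slice is not itself the weight locus of a parabolic bundle with fewer weights, so the inductive hypothesis has nothing to apply to; and ``exploiting the flexibility in the rational kernel'' of $\alpha\mapsto(\xi_j)$ is precisely the point that requires proof, since, as you yourself note, real points of a $\Q$-variety need not be approximable by rational ones. The idea you are missing is the one the paper uses to make the density step trivial: it asserts that the hypothesis $\parc_i(E^*)=0$ for all $i$ forces each individual class $\xi_k=\parc_1(G_k^*)$ to vanish in $H^2(Fl(E),\R)$, not merely their elementary symmetric functions. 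Granting this, the given weights lie in the intersection of the rational \emph{affine} subspaces $\{\parc_1(G_k^*)=0\}$, rational points are dense there, and any rational perturbation $\alpha'$ inside this intersection still gives $\parc_i({E'}^*)=0$ by lemma \ref{lemma:split} together with the injectivity of $p^*$. In short, the paper replaces the nonlinear system $e_k(\xi)=0$ by the linear system $\xi_k=0$ \emph{before} perturbing, which is exactly the move that defuses the obstacle you flagged.

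You should be aware, though, that your unease is justified: the implication ``$e_k(\xi)=0$ for all $k$ implies $\xi_k=0$ for all $k$'' is not a formal consequence of lemma \ref{lemma:split}. Vanishing of the elementary symmetric functions of classes in $H^2(Fl(E),\R)$ does not force the classes themselves to vanish; already for $E=\OO_X^{\oplus 2}$ with the trivial parabolic structure one has $\parc_i(E^*)=0$ for all $i$ while $\parc_1(G_1^*)=c_1(G_1)\neq 0$ on $Fl(E)=X\times\PP^1$. (There the weights are already rational, so the lemma is not threatened, but the intermediate assertion fails as a general implication.) What a complete argument must establish is that the conditions imposed on the classes $A_j=\sum_i\alpha_i^{(j)}[D_i]$ by $\parc_i(E^*)=0$ cut out a rational affine subspace; expanding $e_k(x+p^*A)$ in a basis of $H^*(Fl(E),\R)$ as a free $H^*(X,\R)$-module, one checks in rank $2$ that the nonlinear constraints reduce to conditions on $E$ alone, leaving only linear conditions on the $A_j$. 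Your proposal supplies neither this linearization nor any substitute for it, so it has a genuine gap at exactly the step where the paper's own proof is most terse.
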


\begin{proof}
  We first treat the case where $E^*$ is a line bundle with normalized
  weight
  vector $\alpha= (\alpha_i)$.
    By the assumption, we have
$$\textnormal{par-}c_1(E^*)=c_1(E)+\sum_{j=1}^n\alpha_j\cdot[D_j]=0\ \in  H^2(X,\R)$$
Since $c_1(E)$ and $[D_j]$ are in $H^2(X, \Q)$, the above equation
defines a rational affine subspace of $\R^n$. The rational vectors are
dense in this subspace.  Therefore we can choose a rational vector
$\alpha'=(\alpha'_i)$ with $|{\alpha'}_j-\alpha_j|<\epsilon$ for all
$i$ and
$$c_1(E)+\sum_{j=1}^n\alpha'_j\cdot[D_j]=0\ \in  H^2(X,\R).$$ 

Now we do the general case. By the assumption that $\parc_i(E^*)=0$ in
$H^{2i}(X, \R)$, we have $p^*\parc_i(E^*)=0$ in $H^{2i}(Fl(E), \R)$.
By lemma \ref{lemma:split}, we know that
$\textnormal{par-}c_1(G_k^*)=0$ in $H^{2}(Fl(E), \R)$ for all $k$.
After identifying $\Comp(D)=\Comp(p^*D)$, we may identify weight
vectors of $E^*$ and $p^*E^*$.  It is easy to see that the normalized
weight vectors of $E^*$ are precisely the weight vectors of the
various $G^*_k$.  By the first paragraph, we can find $G'^*_k$,
$\epsilon$-close to $G^*_k$, having rational normalized weights, and
$\textnormal{par-}c_1(G'^*_k)=0$.  Let ${E'}^*$ be $E^*$ as a
quasi-parabolic bundle, but with the normalized weight vectors of
$G'^*_k$.

\end{proof}

\section{Stability and Semistability}

In this section, we will recall the definitions of (semi)stability for
parabolic and equivariant Higgs sheaves.  There are in fact two
different notions: $\mu$-, or slope, (semi)stability and $p$- , or
Hilbert polynomial, (semi)stability.  The $\mu$-(semi)stability
condition behaves well with respect to Biswas' correspondence, while
$p$-(semi)stability is more convenient for the construction of moduli
spaces.

We fix a very ample line bundle $H$ on $X$.  For a parabolic sheaf
$E^*$, we have the following numerical invariants defined by Maruyama
and Yokogawa \cite{my}.  The parabolic Hilbert polynomial of $E^*$ is
\begin{equation}
  \label{eq:Hilbpoly}
  \textnormal{par-}P_{E^*}(m):=\int_0^1P_{E^t}(m)dt
\end{equation}
where $P_{E^t}(m)$ is the Hilbert polynomial of $E^t$ with respect to
$H$. The normalized parabolic Hilbert polynomial of $E^*$ is
$\textnormal{par-}p_{E^*}(m):=\textnormal{par-}P_{E^*}(m)/\rank(E)$.
The parabolic degree of $E^*$ is defined to be
\begin{equation}
  \label{eq:pardeg}
  \textnormal{par-deg}(E^*):=\int_0^1\deg(E^t)dt+\rank(E)\cdot \deg
  (D)
\end{equation}
where $deg(E^t)$ is the usual degree of $E^t$, with respect to $H$.
The parabolic $H$-slope of $E^*$ is
$\textnormal{par-}\mu_H(E^*):=\textnormal{par-deg}(E^*)/\rank(E)$.

For a parabolic Higgs sheaf $(E^*, \theta)$, the above invariants are
defined to be that of its underlying parabolic sheaf $E^*$. We have
the following proposition.

\begin{prop}
  For any parabolic bundle $E^*$, we have
  $$\textnormal{par-deg}(E^*)=\textnormal{par-}c_1(E^*)\cdot H^{d-1}$$
  (We recall that $d=\dim X$.)
\end{prop}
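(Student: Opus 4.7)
The plan is to reduce to the parabolic line bundle case, where both sides of the identity admit a direct calculation, and then assemble the general formula using Definition \ref{df:paravect} together with the fact that the normalized weight vectors of $E^*$ are intrinsic invariants of the parabolic structure.

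For a parabolic line bundle $L^*$ as in Example \ref{ex:paraline} with normalized weight vector $(\beta_i)$, the defining formula $L^{\alpha}=L\bigl(-\sum_i\lfloor 1+\alpha-\beta_i\rfloor D_i\bigr)$ specializes for $t\in[0,1)$ to $L^t = L\bigl(-\sum_{i:\beta_i\le t} D_i\bigr)$. A direct integration gives $\int_0^1\deg(L^t)\,dt = \deg(L)-\sum_i(1-\beta_i)\deg(D_i)$, and adding $\textnormal{rank}(L)\cdot\deg(D)=\sum_i\deg(D_i)$ yields $\deg(L)+\sum_i\beta_i\deg(D_i)$, which matches $\textnormal{par-}c_1(L^*)\cdot H^{d-1}$ by \eqref{eq:parc1}.

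For general $E^*$ of rank $r$, I would choose a Zariski open cover on which $E^*$ splits as $\bigoplus_{k=1}^r L_k^*$ with normalized weight vectors $\beta^{(1)},\dots,\beta^{(r)}$. Although this decomposition is only local, the coherent subsheaf $E^t\subset E$ is globally defined and agrees locally with $\bigoplus_k L_k^t$; hence $E/E^t$ is supported on $D$ with length $\#\{k:\beta^{(k)}_i\le t\}$ at the generic point of $D_i$. This gives
\[
c_1(E^t)=c_1(E)-\sum_i\#\{k:\beta^{(k)}_i\le t\}\,[D_i],
\]
and integrating over $[0,1]$, intersecting with $H^{d-1}$, and adding $r\deg(D)$ produces
\[
\textnormal{par-deg}(E^*)=\deg(E)+\sum_i\Bigl(\sum_k\beta^{(k)}_i\Bigr)\deg(D_i),
\]
which equals $\textnormal{par-}c_1(E^*)\cdot H^{d-1}$ by the definition of the parabolic first Chern class given in Section \ref{sect:chern}.

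The only delicate point is global: the splitting of $E^*$ into line bundle summands is only local, so one must verify that the length of $E/E^t$ at the generic point of $D_i$ is intrinsic to $E^*$. This follows because the multiset of normalized weight vectors is itself intrinsic, as noted in the discussion after Example \ref{ex:paraline}; the generic length at $D_i$ depends only on this multiset and on $t$. Beyond this bookkeeping observation, the argument is a routine integration of floor functions.
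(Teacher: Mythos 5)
Your proof is correct, but it reduces to the rank-one case by a different route than the paper. The paper's proof reduces immediately to a parabolic line bundle ``by taking top exterior powers'' --- i.e.\ it uses (implicitly) that both $\textnormal{par-deg}$ and $\textnormal{par-}c_1$ of $E^*$ coincide with those of the determinant parabolic line bundle --- and then computes the integral \eqref{eq:pardeg} as a finite sum $\sum_i \deg(F^i(L))(\beta_{i+1}-\beta_i)$ over the jumps described in lemma \ref{lemma:paraline}. You instead keep the rank-$r$ bundle and exploit the locally abelian structure of definition \ref{df:paravect} directly: since $E^t$ is global and locally splits as $\bigoplus_k L_k^t$, the torsion quotient $E/E^t$ has generic length $\#\{k:\beta^{(k)}_i\le t\}$ along $D_i$, which gives $c_1(E^t)$ and hence $\deg(E^t)$ for every $t$, and the integration is then immediate. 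Your line-bundle computation $\int_0^1\deg(L^t)\,dt=\deg(L)-\sum_i(1-\beta_i)\deg(D_i)$ agrees with the paper's sum-over-jumps. What your version buys is that you never need to check compatibility of $\textnormal{par-deg}$ and $\textnormal{par-}c_1$ with determinants (a point the paper leaves unjustified), and you avoid the paper's small continuity argument for making the $\beta_i$ distinct; the cost is the extra bookkeeping about generic lengths of $E/E^t$, which, as you note, is harmless because $E/E^t$ is a globally defined sheaf and the multiset of normalized weight vectors is independent of the trivializing cover.
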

\begin{proof}
  By taking top exterior powers, we may reduce to the case where
  $E^*=L^*$ is a line bundle.  We may assume that $L^*$ is as
  described in example \ref{ex:paraline} with
  $0\le \beta_1\le \ldots \le \beta_n <\beta_{n+1}=1$. In fact, we may
  assume that the $\beta_i$ form a strictly increasing sequence,
  because both sides of the expected formula depend continuously on
  these parameters.  Then by lemma \ref{lemma:paraline}, the
  normalized weights are given by $\beta_1,\ldots, \beta_n$ and the
  filtration by $F^i(L)= L(-D_1-\ldots- D_{i})$.

  By \eqref{eq:pardeg}, the left hand side of the purported equation
  is
  \begin{equation*}
    \begin{aligned}
      \textnormal{par-deg}(L^*)=&\sum_{i=0}^{n}\deg(F^i(L))\cdot(\beta_{i+1}-\beta_{i})+\deg(D)\\
      =&\sum_{i=0}^{n}(c_1(L)-\sum_{j=1}^{i}c_1(D_j))\cdot H^{d-1}\cdot(\beta_{i+1}-\beta_{i})+\deg(D)\\
      =&\deg(L)+\sum_{i=0}^n\beta_i\deg(D_i)
    \end{aligned}
  \end{equation*}
  
  By \eqref{eq:parc1}, the right hand side is
  \begin{equation*}
    \begin{aligned}
      \textnormal{par-}c_1(L^*)\cdot H^{d-1}=&(c_1(L)+ \sum_{j=1}^n\beta_j\cdot c_1(D_j))\cdot H^{d-1}\\
      =&\deg(L)+\sum_{i=0}^n\beta_i\deg(D_i)
    \end{aligned}
  \end{equation*}

\end{proof}

We can define (semi)stability of parabolic and $G$-equivariant Higgs
bundles using the numerical invariants defined above.

\begin{df} [{\cite{biswas2}\cite{my}}]\label{df:stable}

1) A parabolic Higgs sheaf $(E^*, \theta)$ on $X$ is called
$\mu_H$-stable (resp. $\mu_H$-semistable), or simply slope stable
(resp. semistable), if for any coherent
saturated subsheaf $V$ of $E$, with $0<\textnormal{rank}
V<\textnormal{rank} E$ and $\theta(V)\subseteq V\otimes
\Omega^1_X(\log D)$, the condition 
$$\textnormal{par-}\mu_H(V^*)< \textnormal{par-}\mu_H(E^*) \ 
(resp.\  \textnormal{par-}\mu_H(V^*)\leq \textnormal{par-}\mu_H(E^*))$$ 
is satisfied, where $V^*$ carries the induced the parabolic structure
from $E^*$, i.e. $V_{\alpha}:=V\cap E^{\alpha}.$ Slope
stability or semistability, with respect to  $\pi^*H$, for a $G$-equivariant Higgs sheaf
$(\mathcal{E}, \vartheta)$ on $Y$ is defined similarly, where  in addition $\mathcal{V}$
is required to be a $G$-equivariant subsheaf.

2) A parabolic Higgs sheaf $(E^*, \theta)$ on $X$ is called
$p$-stable (resp. $p$-semistable) if for any coherent saturated subsheaf $V$ of
$E$, with $0<\textnormal{rank} V<\textnormal{rank} E$, and
$\theta(V)\subseteq V\otimes \Omega^1_X(\log D)$, the condition 
$$\textnormal{par-}p_{V^*}(m)<\textnormal{par-}p_{E^*}(m) \ 
(resp.\  \textnormal{par-}p_{V^*}(m)\leq\textnormal{par-}p_{E^*}(m))$$
 is satisfied for all sufficiently large integers $m$, where $V^*$ carries
the induced parabolic structure. 
\end{df}

\begin{lemma}\label{twostable}
  A $\mu_H$-stable parabolic Higgs bundle $(E^*, \theta)$ on $(X, D)$
  is $p$-stable.
\end{lemma}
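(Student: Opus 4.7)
The plan is to compare the normalized parabolic Hilbert polynomials $\textnormal{par-}p_{V^*}(m)$ and $\textnormal{par-}p_{E^*}(m)$ coefficient by coefficient in $m$, and deduce the polynomial inequality from the strict inequality of slopes. The key observation is that $\theta$-invariance of $V$ and the condition $0<\rank V<\rank E$ are common to both stability conditions; and a coherent saturated $V\subseteq E$ inherits a parabolic structure via $V^\alpha := V\cap E^\alpha$, which is a step function in $\alpha$ with finitely many jumps at a subset of the jumps of $E^*$. Thus the integral in \eqref{eq:Hilbpoly} reduces to a finite weighted sum
$$\textnormal{par-}P_{E^*}(m) \;=\; \sum_{i=0}^{\ell}(\alpha_{i+1}-\alpha_i)\,P_{F^i(E)}(m),$$
where $0=\alpha_0<\alpha_1<\ldots<\alpha_\ell<\alpha_{\ell+1}=1$ and $F^i(E)$ is the $i$-th step of the quasi-parabolic filtration, and analogously for $V^*$.

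Next I would apply Hirzebruch--Riemann--Roch to each term. For any torsion free sheaf $F$ of rank $r$ on $X$,
$$P_F(m) \;=\; \frac{r\,H^d}{d!}\,m^d + \frac{1}{(d-1)!}\bigl(\deg_H F + r\cdot \gamma\bigr) m^{d-1} + O(m^{d-2}),$$
where $\gamma$ depends only on $(X,H)$ (the contribution of $\mathrm{td}_1(X)$). Summing over $i$, using $\sum_i(\alpha_{i+1}-\alpha_i)=1$ and $\int_0^1\deg(E^t)\,dt=\sum_i(\alpha_{i+1}-\alpha_i)\deg F^i(E)$, and dividing by $\rank E$, the definition \eqref{eq:pardeg} of parabolic degree gives
$$\textnormal{par-}p_{E^*}(m) \;=\; \frac{H^d}{d!}\,m^d + \frac{1}{(d-1)!}\bigl(\textnormal{par-}\mu_H(E^*) - \deg D + \gamma\bigr)\,m^{d-1} + O(m^{d-2}).$$
The leading coefficient is independent of $E^*$, and the constants $\gamma$ and $\deg D$ depend only on the data $(X,H,D)$, hence are the same when this expansion is applied to $V^*$.

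Subtracting, I obtain
$$\textnormal{par-}p_{V^*}(m) - \textnormal{par-}p_{E^*}(m) \;=\; \frac{1}{(d-1)!}\bigl(\textnormal{par-}\mu_H(V^*)-\textnormal{par-}\mu_H(E^*)\bigr)\,m^{d-1} + O(m^{d-2}).$$
By $\mu_H$-stability of $(E^*,\theta)$, the coefficient of $m^{d-1}$ on the right is strictly negative for every admissible test subsheaf, so the polynomial inequality $\textnormal{par-}p_{V^*}(m) < \textnormal{par-}p_{E^*}(m)$ holds for all sufficiently large integers $m$. The main technical point, rather than any combinatorial obstacle, is verifying that the induced filtration $V\cap E^\alpha$ genuinely defines a parabolic structure on $V$ (axioms P1--P4 of Definition~1.1) and that $V$ being saturated in $E$ ensures each $V\cap F^i(E)$ is torsion free of rank $\rank V$, so that the Riemann--Roch expansion above applies term by term and the constants $\gamma,\deg D$ really do cancel in the difference.
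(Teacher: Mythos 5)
Your proposal is correct and follows essentially the same route as the paper's proof: expand the (normalized) parabolic Hilbert polynomial via Riemann--Roch, observe that the leading coefficient is rank-normalized and hence common to $V^*$ and $E^*$, and identify the $m^{d-1}$ coefficient with the parabolic slope up to constants depending only on $(X,H,D)$ that cancel in the difference. The only difference is cosmetic (you package the coefficients as $\deg_H F + r\gamma$ while the paper works directly with the $a_i$'s), so nothing further is needed.
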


\begin{proof}
  Denote $H$ by $\OO_X(1)$. For any torsion free sheaf $E$, the
  Hilbert polynomial $P_{E}(m)=\dim H^0(X, E\otimes\OO_X(m))$, for
  $m\gg 0$. In particular, $P_{E}$ can be uniquely written in the form
$$P_{E}(m)=\sum_{i=0}^da_i(E)\frac{m^i}{i!}.$$ 
The rank of $E$ is $\rank (E)=\frac{a_d(E)}{a_d(\OO_X)}$. By
the Hirzebruch-Riemann-Roch formula, we have
$\deg (E)=a_{d-1}(E)-\rank(E)\cdot a_{d-1}(\OO_X)$.

Now we consider the parabolic Higgs bundle $(E^*, \theta)$. For the
parabolic Hilbert polynomial, we have
$$\textnormal{par-}P_{E^*}(m)=\sum_{i=0}^d(\sum_{j=0}^{l}a_i(F^{j}(E))(\alpha_{j+1}-\alpha_{j}))\frac{m^i}{i!},$$ 
where $\alpha_0=0$ and $\alpha_{l+1}=1$. Then the normalized parabolic
Hilbert polynomial of $(E^*, \theta)$ is
\begin{equation}\label{eq:parpE}
  \begin{aligned}
    \textnormal{par-}p_{E^*}(m)=\frac{a_d(\OO_X)}{a_d(E)}\sum_{i=0}^d(\sum_{j=0}^{l}a_i(F^{j}(E))(\alpha_{j+1}-\alpha_{j}))\frac{m^i}{i!}.
  \end{aligned}
\end{equation}

For the parabolic degree, we have
$$\textnormal{par-deg}(E^*)=\sum_{j=1}^{l}a_{d-1}(F^{j}(E))(\alpha_{j+1}-\alpha_{j})+\rank(E)\cdot (\deg(D)-a_{d-1}(\OO_X)).$$
Then the parabolic $H$-slope is
\begin{equation}\label{eq:parmuE}
  \begin{aligned}
    \textnormal{par-}\mu_H(E^*)=\frac{a_d(\OO_X)}{a_d(E)}\sum_{j=1}^{l}a_{d-1}(F^{j}(E))(\alpha_{j+1}-\alpha_{j})+\deg(D)-a_{d-1}(\OO_X).
  \end{aligned}
\end{equation}

Since the parabolic Higgs bundle $(E^*, \theta)$ is $\mu_H$-stable,
for any coherent subsheaf $V$ of $E$, satisfying the conditions of
definition \ref{df:stable}, we have
$\textnormal{par-}\mu_H(V^*)< \textnormal{par-}\mu_H(E^*)$, i.e.,
\begin{equation}\label{eq:ineqpE}
  \begin{aligned}
    \frac{a_d(\OO_X)}{a_d(V)}\sum_{j=1}^{l}a_{d-1}(F^{j}(V))(\alpha_{j+1}-\alpha_{j})<\frac{a_d(\OO_X)}{a_d(E)}\sum_{j=1}^{l}a_{d-1}(F^{j}(E))(\alpha_{j+1}-\alpha_{j}).
  \end{aligned}
\end{equation}

In general, for any parabolic Higgs sheaf $(G^*, \theta)$, by the
above equation \eqref{eq:parpE}, the leading term of
$\textnormal{par-}p_{G^*}(m)$ is always $\frac{m^d}{d!}$, and the
$d-1$ degree term
is
$$\frac{a_d(\OO_X)}{a_d(G)}\sum_{j=1}^{l}a_{d-1}(F^{j}(G))(\alpha_{j+1}-\alpha_{j})\frac{m^{d-1}}{(d-1)!}.$$
Hence we
get $$\textnormal{par-}p_{V^*}(m)<\textnormal{par-}p_{E^*}(m),$$ for
all sufficiently large integers $m$, by the previous inequality
\eqref{eq:ineqpE}.
\end{proof}

The $\mu$-(semi)stability condition behaves well in Biswas's
correspondence. In fact, we have the following result by Biswas.
\begin{lemma}[{Biswas \cite[theorem 5.5]{biswas2}}]\label{lemma:stab}
  Under Biswas's correspondence in theorem \ref{thm:biswas},
  $\mu_H$-semistable (Higgs) bundles with weights in $\frac{1}{N}\Z$
  correspond to $\mu_{\pi^*H}$-semistable $G$-equivariant (Higgs)
  bundles.
\end{lemma}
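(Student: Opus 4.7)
The plan is to transport slopes across Biswas's correspondence and then match destabilizing subsheaves on each side. First I would compare parabolic slopes on $(X,D)$ with ordinary slopes on $Y$ for corresponding objects. Using Lemma~\ref{lemma:Chern} together with the projection formula (so that $\pi^*\beta \cdot (\pi^*H)^{d-1}=\pi^*(\beta\cdot H^{d-1})$ and $\deg_Y\pi^*(-)=(\deg\pi)\deg_X(-)$ on top classes),
$$\deg_{\pi^*H}(\mathcal{E})=c_1(\mathcal{E})\cdot(\pi^*H)^{d-1}=\pi^*\bigl(\textnormal{par-}c_1(E^*)\cdot H^{d-1}\bigr)=(\deg\pi)\cdot\textnormal{par-deg}_H(E^*).$$
Since $\rank\mathcal{E}=\rank E$ by Biswas's construction, this gives $\mu_{\pi^*H}(\mathcal{E})=(\deg\pi)\cdot\textnormal{par-}\mu_H(E^*)$, and the identical relation holds between any $G$-equivariant saturated (Higgs) subsheaf of $\mathcal{E}$ and its parabolic counterpart in $E^*$.

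Second, I would upgrade Theorem~\ref{thm:biswas} from an equivalence between (Higgs) bundles to a bijection between saturated $G$-equivariant (Higgs) subsheaves $\mathcal{V}\subseteq\mathcal{E}$ and saturated parabolic (Higgs) subsheaves $V^*\subseteq E^*$ carrying the induced parabolic structure $V^\alpha=V\cap E^\alpha$. In the forward direction, applying formula~\eqref{eq:Ealpha} to $\mathcal{V}$ produces a parabolic subsheaf of $E^*$; one checks that its filtration agrees with the induced one. The $\theta$-invariance is preserved because the correspondence sends $\vartheta$ to $\theta$ compatibly with inclusions.

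Once these two ingredients are in place, the proof of the lemma is immediate. Suppose $E^*$ is $\mu_H$-semistable and $\mathcal{V}\subseteq\mathcal{E}$ is a $G$-equivariant, $\vartheta$-invariant saturated subsheaf of intermediate rank. Its parabolic counterpart $V^*\subseteq E^*$ is $\theta$-invariant and saturated, so $\textnormal{par-}\mu_H(V^*)\le\textnormal{par-}\mu_H(E^*)$; multiplying by $\deg\pi$ yields $\mu_{\pi^*H}(\mathcal{V})\le\mu_{\pi^*H}(\mathcal{E})$. The converse runs the same inequality backwards starting from a parabolic destabilizing subsheaf and producing its equivariant counterpart.

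The main obstacle is the second step: verifying that the functor $\mathcal{V}\mapsto(\pi_*(\mathcal{V}\otimes\mathcal{O}_Y(\lfloor -\alpha\cdot\pi^*D\rfloor)))^G$ applied to a saturated equivariant subsheaf of $\mathcal{E}$ actually recovers the subsheaf $V\cap E^\alpha$ rather than some larger parabolic structure. This is a local question at the generic points of the components of $\pi^*D$, where both filtrations are dictated by the eigenspace decomposition of the local stabilizer action; the matching follows from exactness of $G$-invariants in characteristic zero and the flatness of $\pi$ over those generic points. With that compatibility verified, everything else is the slope computation of the first paragraph.
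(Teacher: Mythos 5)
The paper does not actually prove this lemma: it is quoted verbatim from Biswas \cite[theorem 5.5]{biswas2} and used as a black box, so there is no in-paper argument to compare against. Your outline is essentially a reconstruction of Biswas's own proof: transport degrees via $\pi^*\textnormal{par-}c_1(E^*)=c_1(\E)$ and the projection formula to get $\mu_{\pi^*H}(\E)=(\deg\pi)\cdot\textnormal{par-}\mu_H(E^*)$, then match $G$-equivariant saturated (Higgs) subsheaves of $\E$ with saturated parabolic (Higgs) subsheaves of $E^*$ carrying the induced structure. That is the right strategy, and you correctly identify where the real work lies.

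Two points deserve more care than your sketch gives them. First, lemma \ref{lemma:Chern} is stated for parabolic \emph{bundles} in the locally abelian sense of definition \ref{df:paravect}, whereas a saturated subsheaf $V\subseteq E$ with the induced filtration $V^\alpha=V\cap E^\alpha$ is in general only a parabolic sheaf; you cannot literally cite that lemma to get $\deg_{\pi^*H}(\mathcal{V})=(\deg\pi)\cdot\textnormal{par-deg}_H(V^*)$. The correct route is to note that slopes only depend on behaviour in codimension one, restrict to the complement of a codimension-two set where $V$ is a subbundle and $\pi$ is the explicit cyclic local model along each $\tilde D_i$, and do the isotypic-component computation there --- which is the content of \cite[lemma 2.7]{biswas} and of the proof of \cite[theorem 5.5]{biswas2}. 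Second, for the semistability equivalence you do not actually need the functor of \eqref{eq:Ealpha} to induce an exact bijection on subsheaves: in one direction the image of a destabilizing $V$ in $\E$ may need to be saturated (which only increases degree, so the inequality survives), and in the other direction the induced filtration $V\cap E^\alpha$ can only dominate the one transported from $\mathcal{V}$, again moving the parabolic degree the right way. Making these one-sided comparisons explicit closes the argument; as written, the assertion that the two filtrations ``agree'' is stronger than what you verify and stronger than what you need.
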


 Finally, we note that {\em a priori}  a $\mu_{\pi^*H}$-semistable $G$-equivariant Higgs
  bundle need  not be   $\mu_{\pi^*H}$-semistable in the usual
  sense. Fortunately, this is the case, and was observed by Biswas \cite[lemma 2.7]{biswas} in the
  vector bundle setting. The same reasoning applies here.

  \begin{lemma}\label{lemma:equivstable} For a $G$-equivariant
    Higgs bundle $(\mathcal{E}, \vartheta)$, if it is $\mu_{\pi^*H}$-semistable
    as a $G$-equivariant Higgs bundle, then the underlying Higgs
    bundle $(\mathcal{E}, \vartheta)$ is $\mu_{\pi^*H}$-semistable in the usual
    sense.
  \end{lemma}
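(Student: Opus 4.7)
My plan is to argue by contradiction, exploiting the uniqueness of the maximal destabilizing subsheaf in the Harder--Narasimhan filtration, exactly as Biswas does in \cite[lemma 2.7]{biswas} for ordinary vector bundles. The only thing to verify is that the argument adapts cleanly to the Higgs setting, which it does since slopes are defined on underlying sheaves and Higgs subsheaves form a reasonable class closed under sums and saturations.

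Suppose for contradiction that $(\mathcal{E},\vartheta)$ is \emph{not} $\mu_{\pi^*H}$-semistable as an ordinary Higgs bundle. Then there exists a saturated $\vartheta$-invariant subsheaf $\mathcal{F}\subset\mathcal{E}$ with $\mu_{\pi^*H}(\mathcal{F})>\mu_{\pi^*H}(\mathcal{E})$. The standard Harder--Narasimhan argument (applied in the abelian category of Higgs sheaves, using the fact that sums and saturations of $\vartheta$-invariant subsheaves are $\vartheta$-invariant) produces a unique maximal destabilizing Higgs subsheaf $\mathcal{F}_{\max}\subset\mathcal{E}$: the saturated $\vartheta$-invariant subsheaf with the largest slope, and, among those of largest slope, the one of largest rank.

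The key observation is that uniqueness forces $G$-equivariance. Indeed, for each $g\in G$, the $G$-equivariant structure on $\mathcal{E}$ provides an isomorphism $g\colon\mathcal{E}\to g^*\mathcal{E}$ intertwining $\vartheta$ with $g^*\vartheta$. Hence $g(\mathcal{F}_{\max})\subseteq g^*\mathcal{E}$ is again a saturated $g^*\vartheta$-invariant subsheaf with the same rank and slope (since $g$ is an automorphism of $Y$ preserving $\pi^*H$). Pulling back along the $G$-action on $Y$ and using uniqueness of the maximal destabilizer on the equivariant side, we conclude $g\cdot\mathcal{F}_{\max}=\mathcal{F}_{\max}$ for every $g\in G$. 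Thus $\mathcal{F}_{\max}$ is a $G$-equivariant Higgs subsheaf with $\mu_{\pi^*H}(\mathcal{F}_{\max})>\mu_{\pi^*H}(\mathcal{E})$, contradicting the $G$-equivariant $\mu_{\pi^*H}$-semistability of $(\mathcal{E},\vartheta)$.

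The only potential obstacle is the existence/uniqueness of the maximal destabilizing Higgs subsheaf; this is a standard generalization of the vector-bundle case since the category of Higgs sheaves on $Y$ is noetherian and abelian, so the usual proof (take any $\vartheta$-invariant subsheaf of maximal slope and then maximal rank, and note that the sum of two such is still $\vartheta$-invariant with the same slope) goes through without change. Everything else is essentially the content of Biswas' argument.
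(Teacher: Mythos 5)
Your argument is correct and is essentially the paper's own proof: the paper likewise invokes the canonical Harder--Narasimhan filtration for Higgs sheaves (citing Simpson), observes that canonicity forces it to be $G$-stable, and derives the same contradiction with equivariant semistability from its first graded piece. The only cosmetic difference is that you work with the maximal destabilizing subsheaf directly rather than the full filtration.
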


\begin{proof}

  We fix the polarization $\mathcal{H}=\pi^*H$ on $Y$, which is a
  $G$-equivariant line bundle. By Simpson \cite[lemma 3.1]{simp2}
  (and the  paragraph following the lemma), there exists a canonical Harder-Narasimhan
  filtration by Higgs subsheaves
  $$0\subsetneq (\mathcal{E}_1, \vartheta|_{\mathcal{E}_1})\subsetneq
  (\mathcal{E}_2, \vartheta|_{\mathcal{E}_2})\subsetneq \ldots
  \subsetneq (\mathcal{E}_k, \vartheta|_{\mathcal{E}_k})=(\mathcal{E},
  \vartheta),$$ with strictly decreasing slope with respect to
  $\mathcal{H}$. This must be stable under $G$. So if $(\mathcal{E},
  \vartheta)$ were not semistable in the usual sense,
we would  have
  $$\mu_{\mathcal{H}}((\mathcal{E}_1,
  \vartheta|_{\mathcal{E}_1}))>\mu_{\mathcal{H}}((\mathcal{E},
  \vartheta)),$$ 
contradicting semistability in the equivariant sense.

\end{proof}

\section{Review of Nonabelian Hodge theory}\label{sect:nonabelian}

Natural examples of  parabolic Higgs bundles come from variations of Hodge structures.
Suppose that $(V_o, \nabla_o )$ is a flat bundle  underlying a 
polarized variation of Hodge structure on $X-D$ \cite{griffiths, ws} with
unipotent monodromy around components of $D$. Then we can form the
Deligne canonical extension $V$ to $V_o$.  The bundle $V_o$ also
carries  a Hodge filtration $F_o^\dt$ satisfying Griffiths' transversality.
By a theorem of Schmid
\cite{ws}, the Hodge filtration extends to a  filtration $F^\dt$ of $V$.
 Let $E = \Gr_FV$, and $\theta = \Gr_F\nabla$. Then, as observed
 already in \cite{arapura}, $(E, \theta)$
 is a Higgs bundle with trivial parabolic structure and trivial Chern classes. 
If the monodromies are quasi-unipotent,
as in geometric examples, then we may use a  Galois $G$-cover $\pi: Y
\rightarrow X$, as in section~\ref{sect:biswas}, such that $\pullback{\pi}\nabla^o$ is
unipotent. The  Higgs bundle associated to $\pullback{\pi}\nabla^o$ is
naturally $G$-equivariant, and thus via theorem~\ref{thm:biswas}, we
get a parabolic Higgs bundle on $X$ with rational weights and trivial
parabolic Chern classes.
To more general complex variations of Hodge structures, we can also
associate a parabolic Higgs bundles with vanishing parabolic Chern
classes (but real weights), but this relies on nonabelian Hodge
theory.  We review these ideas now, since they will be needed later.
Let us start with a definition of a  complex polarized variation of Hodge
structures or a $\C$-PVHS from the $C^\infty$ point of view. Let
$A^{i,j}(H)$ denote the space of $C^{\infty}$ $(i,j)$-forms with
values in a bundle $H$.

\begin{df}
  A complex polarized variation of Hodge structures over $U=X-D$ is a $C^{\infty}$-vector bundle $H$ with a
  decomposition $H={\bigoplus}_p H^p$, a flat connection $\mathcal{D}$
  and a horizontal indefinite Hermitian form $k_H$. These are required to satisfy
  Griffiths' transversality
$$\mathcal{D}: H^p\longrightarrow A^{0,1}(H^{p+1})\oplus
A^{1,0}(H^{p})\oplus A^{0,1}(H^{p}) \oplus A^{1,0}(H^{p-1}),$$
the decomposition ${\bigoplus}_p H^p$ is orthogonal with respect
to $k_H$, and $k_H$ is positive (negative) definite on $H^p$
with $p$ is even (odd). 
\end{df}

To relate this to the more traditional perspective, decompose
$\mathcal{D}$ into operators of types $(1,0)$ and $(0,1)$ 
$$
\mathcal{D}=\mathcal{D}^{1,0}+\mathcal{D}^{0,1}  
$$
The operator $\mathcal{D}^{0,1}$ defines a complex structure on $H$,
and let $V_o$ denote the corresponding holomorphic bundle.
The operator $\nabla=\mathcal{D}^{1,0}$ induces a holomorphic
connection on $V_o$, and $F^pV_o = H^p\oplus H^{p+1}\oplus \ldots$  forms
a holomorphic subbundle such that $\nabla(F^pV_0)\subset
\Omega_U^1\otimes F^{p-1}V_o$. The graded holomorphic bundle 
 $E_o = \Gr_FV_o$ carries a Higgs field $\theta =\Gr_F\nabla$.

The Higgs bundle $(E_o,\theta)$ can be constructed from a different point of view which is more general.
First observe that after changing signs of  $k_H$ on odd $H^p$, we get a positive
  definite Hermitian form $K_H$.
Suppose more generally that we are given a $C^{\infty}$ flat bundle $(H,\mathcal{D})$ with a Hermitian
metric $K$ over $U$, we can  decompose
$\mathcal{D}=\mathcal{D}^{1,0}+\mathcal{D}^{0,1}$ as above.
Let $\delta'$ and $\delta''$ be operators of type $(1,0)$ and $(0,1)$ 
such that $\mathcal{D}^{1,0}+\delta''$ and $\delta'+\mathcal{D}^{0,1}$ 
are metric connections with respect to the metric $K$, i.e.,

$$(\mathcal{D}^{0,1}u, v)_K+(u, \delta'v)_K=\mathcal{D}^{0,1}(u,v)_K, $$ 
$$(\delta''u, v)_K+(u, \mathcal{D}^{1,0}v)_K=\delta''(u,v)_K, $$ 
for all local sections $u,v$ of $ H$.
Define 
\begin{equation*}
  \begin{aligned}
\bar{\partial} &:=\frac{1}{2}(\mathcal{D}^{0,1}+\delta'')\\      
%\partial &:=\frac{1}{2}(\mathcal{D}^{1,0}+\delta')\\    
\theta &:=\frac{1}{2}(\mathcal{D}^{1,0}-\delta')\\
%\bar{\theta} &:=\frac{1}{2}(\mathcal{D}^{0,1}-\delta'')\\ 
 \end{aligned}
\end{equation*}

\begin{df}
A triple $(H, \mathcal{D}, K)$  on $U$ is called a
\textit{harmonic bundle} if the pseudo-curvature $G_K
:=\bar{\partial}\theta=0$.
A harmonic bundle $(H, \mathcal{D}, K)$ is tame if the eigenvalues of
the associated Higgs field $\theta$ (which are multivalued $1$-forms) 
have poles of order at most 1, near the divisor $D$.
\end{df}

Given a harmonic bundle $(H, \mathcal{D}, K)$, $H$ equipped with
$\mathcal{D}^{0,1}$ becomes a holomorphic bundle $V_o$ over $U$ with a holomorphic
connection $\nabla$ induced from $\mathcal{D}^{1,0}$; $H$ equipped with 
$\bar{\partial}$ becomes a holomorphic bundle $E_o$ over $U$ and $\theta$
becomes a holomorphic Higgs field $\theta: E_o\rightarrow \Omega_U^1\otimes
E_o$.    If $(H, \mathcal{D}, K)$ is tame, then both $V_o$ and $E_o$
extend to  parabolic bundles over $X$ making the latter into a
parabolic Higgs bundle. 
 Roughly speaking, $E^{\alpha}\subset j_*E_o$ is
generated by sections $s$, with $|s(x)|_{K_H}=
O(|f(x)|^{\alpha-\epsilon})$, for all $\epsilon >0$, where $f$ is a
local equation for $D$. A similar description holds  for $V^\alpha$.
The connection $\mathcal{D}^{0,1}$ induces a
logarthmic connection on $V^\alpha$.

We have the following correspondence given by Simpson \cite[main theorem]{simp3}
for curves and Mochizuki \cite[theorem 1.4]{moch1} for higher
dimensional quasi-projective varieties.

\begin{thm}[Kobayashi-Hitchin Correspondence]\label{khcorr}
For the quasi-projective variety $X-D$ and ample line bundle $H$ over
$X$, we have a one to one correspondence between tame harmonic bundles 
$(H, \mathcal{D}, K)$ and $\mu_H$-polystable parabolic Higgs bundles 
$(E^*, \theta)$ with vanishing parabolic Chern classes, 
where ``$\mu_H$-polystable'' bundle means a direct sum of $\mu_H$-stable bundles.
\end{thm}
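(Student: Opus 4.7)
The plan is to establish the two directions of the correspondence separately, following the strategies of Simpson (in the curve case) and Mochizuki (in higher dimensions). Both directions require substantial analytical work; my sketch will necessarily be at a high level.

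For the forward direction, given a tame harmonic bundle $(H, \mathcal{D}, K)$ on $U = X - D$, I would first extract the holomorphic Higgs bundle $(E_o, \theta)$ on $U$ from the decomposition $\mathcal{D} = \mathcal{D}^{1,0} + \mathcal{D}^{0,1}$ together with the adjoint operators $\delta', \delta''$, exactly as in the discussion preceding the theorem. The tameness hypothesis, which controls the poles of the eigenvalues of $\theta$, lets me define the parabolic extension $E^\alpha \subset j_*E_o$ as the subsheaf of sections $s$ with $|s|_K = O(|f|^{\alpha - \epsilon})$ for every $\epsilon > 0$ and local defining function $f$ of $D$. That $(E^*, \theta)$ is a parabolic Higgs bundle then follows from the pluriharmonicity $\bar{\partial}\theta = 0$ and standard regularity estimates. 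For vanishing of the parabolic Chern classes, I would apply a Chern-Weil computation: the curvature of the harmonic metric gives representatives of the ordinary Chern classes on $U$, and the corrections arising from the parabolic weights precisely cancel the boundary contributions along $D$. Polystability is proved by showing that any Higgs subsheaf $(V, \theta|_V)$ admits an orthogonal projection from the harmonic metric, and then using a Chern-Weil estimate together with positivity of the second fundamental form to bound $\textnormal{par-}\mu_H(V^*) \leq \textnormal{par-}\mu_H(E^*)$, with equality forcing a direct-summand decomposition.

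For the reverse direction, given a $\mu_H$-polystable parabolic Higgs bundle $(E^*, \theta)$ with vanishing parabolic Chern classes, I would need to construct a Hermite-Einstein metric $K$ on $E|_U$ with the correct tameness. The strategy is to seek $K$ as a minimizer of a Donaldson-type functional, or equivalently as the large-time limit of a Donaldson heat flow, adapting the Uhlenbeck-Yau-Simpson approach to the parabolic setting. One starts from a smooth initial metric built from the prescribed parabolic weights using local model metrics near each component of $D$, and then evolves it. A priori estimates on the curvature and on $|\theta|_K$ must be obtained \emph{uniformly up to the divisor}, and the polystability hypothesis is used to rule out divergence of the flow via a Harder-Narasimhan-type argument.

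The main obstacle is the analytical work near the divisor $D$: one must control the singular behavior of the evolving metric so that the limit is genuinely tame, in particular so that the Higgs field of the resulting harmonic bundle has at worst logarithmic poles with the prescribed residues. In the compact Kähler case (no $D$), Simpson's arguments suffice; the parabolic extension by Mochizuki requires delicate cutoff and approximation arguments, together with a localization near each irreducible component of $D$ via model metrics adapted to the normalized weights, in order to pass from estimates on compact subsets to global ones. Once the metric is constructed, verifying that it recovers the original $(E^*, \theta)$ under the forward direction closes the correspondence.
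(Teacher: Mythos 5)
The paper does not prove this theorem: it is quoted as an external input, attributed to Simpson \cite{simp3} (the curve case) and Mochizuki \cite{moch1} (higher dimensions), so there is no internal argument of the paper to compare yours against. Your outline is an accurate table of contents for the proof that exists in the literature --- extract $(E_o,\theta)$ from the decomposition of $\mathcal{D}$ and the adjoint operators, define $E^\alpha$ by norm growth, prove vanishing of parabolic Chern classes by Chern--Weil with boundary corrections, prove polystability via the second fundamental form, and in the converse direction run a heat-flow/continuity method from a model metric adapted to the prescribed weights. That is indeed how Simpson and Mochizuki proceed.

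As a proof, however, it has genuine gaps: every analytically substantive step is deferred. Concretely, (i) the claim that the norm-growth filtration $E^\alpha\subset j_*E_o$ is a coherent, locally abelian parabolic bundle with discrete jumps is itself a hard theorem of Simpson and Mochizuki, not a consequence of ``standard regularity estimates''; (ii) the Chern--Weil identification of $\textnormal{par-deg}$ with a curvature integral, and the corresponding inequality for arbitrary saturated Higgs subsheaves, requires delicate control of the integrals near $D$ where the metric degenerates; (iii) in the converse direction, the uniform a priori estimates up to the divisor and the verification that the limiting metric is tame and adapted to the given parabolic structure (so that the round trip recovers $(E^*,\theta)$) occupy the bulk of Mochizuki's work and cannot be filled in a few lines. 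Since the paper treats the statement as a citation, the appropriate course is the same here: cite \cite{simp3} and \cite{moch1} rather than attempt to reprove the correspondence.
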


Mochizuki \cite[theorem 1.1]{moch2} gives a stronger statement, which
will not need.  
% According to the Griffiths' transversality denote the decomposition of
% the flat connection to be
% $\mathcal{D}=\bar{\theta}+\partial+\bar{\partial}+\theta$.
% $\bar{\partial}$ gives a holomorphic structure on each $H^p$, and
% $\theta$ give a holomorphic map
% $\theta: H^p\to H^{p-1}\otimes\Omega^1_{X}$.  Let $E^p$ be the
% associated holomorphic bundle of $(H^p, \bar{\partial})$, then we get
% an underlying Higgs bundle $(E={\bigoplus}_p E^p, \theta)$. Note that
% $\theta$ satisfy the Griffiths' transversality condition
% $\theta: E^p\rightarrow E^{p-1}\otimes\Omega^1_X$.  The horizontal
% Hermitian form $k_H$ provides positive definite hermitian metric $K_H$
% over $H$ by changing sign of $k_H$ on $H^p$, for odd $p$. The
% ``pseudo-curvature'' of the metric $K_H$ over the flat bundle
% $(H, \mathcal{D})$ is $\bar{\partial}\theta=0$, which is coming from
% the Griffiths' transversality. Thus $K_H$ is a harmonic metric.
%
% From now on, we denote $(H, \mathcal{D}, k_H)$ to be a $\C$-PVHS over
% the quasi-projective variety $X-D$, with the harmonic metric
% $K_H$. The asymptotic behavior of the metric $K_H$ near the divisor
% $D$ is mild. In fact, 
A key example of a tame harmonic is given by $\C$-PVHS.  This has
certain extra features as well.

\begin{prop} \label{vhsharmonic} Any $\C$-PVHS $(H, \mathcal{D}, k_H)$
  with  metric $K_H$ over $U$ is a tame harmonic bundle. The
  resulting parabolic   structure on $V$ agree  Deligne parabolic structure.
The filtration
$$F^pV^\alpha= V^\alpha\cap  j_* F^pV$$
gives a filtration by subbundles, and the associated graded $Gr_FV^\alpha$ can be
identified with $E^\alpha$.
\end{prop}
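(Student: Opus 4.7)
The plan is to establish the three assertions in turn, relying on Schmid's asymptotic analysis of the Hodge metric. First I would verify that $(H,\mathcal{D},K_H)$ is a harmonic bundle. Griffiths' transversality decomposes $\mathcal{D}^{1,0}=\nabla'+\theta'$ with $\nabla'$ preserving the Hodge grading $H=\bigoplus_p H^p$ and $\theta'\colon H^p\to A^{1,0}(H^{p-1})$, and similarly $\mathcal{D}^{0,1}=\bar\partial'+\bar\theta'$ with $\bar\theta'\colon H^p\to A^{0,1}(H^{p+1})$. Because $K_H$ differs from $k_H$ only by a sign on odd summands while the Hodge decomposition is $k_H$-orthogonal, a direct computation of the $K_H$-adjoints defining $\delta'$ and $\delta''$ yields $\delta'=-\bar\theta'$ and $\delta''=-\theta'$. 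Hence $\bar\partial=\tfrac{1}{2}(\mathcal{D}^{0,1}+\delta'')=\bar\partial'$ realizes the holomorphic structure of $E_o=\Gr_F V_o$, while $\theta=\tfrac{1}{2}(\mathcal{D}^{1,0}-\delta')=\theta'$ is the Griffiths Higgs field $\Gr_F\nabla$. The pseudo-curvature vanishing $\bar\partial\theta=0$ then says $\theta'$ is holomorphic for $\bar\partial'$, which comes from flatness of $\mathcal{D}$ by taking graded pieces of the curvature identity. Tameness is then immediate: since $\nabla$ has logarithmic poles on the Deligne extension, $\theta$ extends to a logarithmic map $E\to\Omega^1_X(\log D)\otimes E$ with bounded residues, so its eigenvalues have poles of order at most one.

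Next, to identify the norm-based parabolic structure on $V$ with the Deligne structure, I would invoke Schmid's norm estimates, extended to the several-variable setting by Cattani--Kaplan--Schmid, and to arbitrary complex monodromy by Mochizuki. These say that for a flat multivalued section $s$ of $V_o$ whose projection to a generalized monodromy eigenspace of eigenvalue $e^{-2\pi i\lambda}$ along $D_i$ is nontrivial, the Hodge norm satisfies $|s|_{K_H}\sim |f_i|^{\mathrm{Re}(\lambda)}(\log|f_i|)^{k}$ near $D_i$ for a local equation $f_i$ of $D_i$. Therefore the growth condition $|s|_{K_H}=O(|f_i|^{\alpha-\epsilon})$ for all $\epsilon>0$ selects exactly the sections whose monodromy exponents satisfy $\mathrm{Re}(\lambda)\geq\alpha$, which by definition characterizes the Deligne canonical extension $V^\alpha$ with residue eigenvalues of real part in $[\alpha,1+\alpha)$.

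Finally, Schmid's theorem that the Hodge filtration extends to a filtration of $V$ by holomorphic subbundles, together with the $K_H$-orthogonality of $H=\bigoplus H^p$, yields the filtration assertion. Orthogonality allows the norm estimate to be applied componentwise in the Hodge grading, so $F^pV^\alpha=V^\alpha\cap j_*F^pV$ is a holomorphic subbundle of $V^\alpha$, and its associated graded $\Gr^p_F V^\alpha$ is generated by Hodge-$p$ components of sections whose norm is $O(|f|^{\alpha-\epsilon})$, which is precisely the $\alpha$-level of the parabolic structure on the Hodge-$p$ summand of $E_o=\Gr_F V_o$; summing over $p$ identifies $\Gr_F V^\alpha$ with $E^\alpha$. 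The main obstacle is concentrated in the second step: the norm estimate matching the analytic parabolic structure with the algebraic Deligne one is deep, depending on Schmid's nilpotent and $SL_2$-orbit theorems and, in the non-unipotent setting, on Mochizuki's theory of tame harmonic bundles; essentially the full analytic content of the proposition lives there.
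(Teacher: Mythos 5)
The paper does not actually prove this proposition: it only points to \cite[section 7]{brun}, with the analytic input ultimately coming from Schmid and Mochizuki. So your sketch is not competing with an argument in the text but reconstructing the standard one from the literature, and in outline it is the right reconstruction: identify $(\bar\partial,\theta)$ with the holomorphic structure and Higgs field of $\Gr_FV_o$, deduce $G_K=0$ from flatness of $\mathcal{D}$, match the norm-growth filtration with Deligne's $V^\alpha$ via the Hodge norm estimates, and use orthogonality of the Hodge decomposition to pass to $\Gr_F$. You also correctly locate where the real work lies --- the norm estimates and the fact that $F^pV^\alpha$ is a subbundle of $V^\alpha$, which in the several-variable, non-unipotent complex setting requires Cattani--Kaplan--Schmid or Mochizuki rather than Schmid alone --- and you leave it to those references, which is consistent with the level of detail the paper itself adopts.

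One local error in the first step: the formulas $\delta'=-\bar\theta'$ and $\delta''=-\theta'$ cannot be right, since the paper takes $\delta'$ of type $(1,0)$ and $\delta''$ of type $(0,1)$, whereas $\bar\theta'$ is of type $(0,1)$ and $\theta'$ of type $(1,0)$; moreover substituting them into $\bar\partial=\tfrac12(\mathcal{D}^{0,1}+\delta'')$ yields $\tfrac12(\bar\partial'+\bar\theta'-\theta')$, not $\bar\partial'$. Writing $\mathcal{D}^{1,0}=\partial'+\theta'$ and $\mathcal{D}^{0,1}=\bar\partial'+\bar\theta'$ for the grading-preserving and grading-shifting parts, the correct identities --- which follow from the $k_H$-orthogonality of the $H^p$ and the adjointness of $\theta'$ and $\bar\theta'$ up to the sign flip relating $k_H$ to $K_H$ --- are $\delta'=\partial'-\theta'$ and $\delta''=\bar\partial'-\bar\theta'$. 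These do give $\bar\partial=\bar\partial'$ and $\theta=\theta'$ as you intended, and with that correction your sketch stands.
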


A proof can be found in  \cite[section 7]{brun}, although the result
is not explicitly stated in this form.  The proposition gives a
grading on $E$ by $Gr_F^p V^0$. 
Let 
\begin{equation}
  \label{eq:Fmax}
  F^{\max}V=Gr_F^p V^0 = F^pV^0
\end{equation}
where $p$ is the largest integer for which this is nonzero.
We will refer to this as the {\em smallest Hodge bundle} associated to
the variation.

\begin{cor}
  A $\C$-PVHS gives rise to a $\mu_H$-polystable parabolic Higgs
  bundle $(E^*, \theta)$ with vanishing parabolic Chern
  classes. Furthermore $\theta$ is nilpotent in the sense that it has zero
eigenvalues.  
\end{cor}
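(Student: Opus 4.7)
The plan is to assemble the two assertions directly from results already on the table. The proposal is to invoke Proposition \ref{vhsharmonic} to place any $\C$-PVHS $(H,\mathcal{D},k_H)$ in the tame harmonic bundle setting with metric $K_H$ (obtained by flipping signs on odd Hodge pieces), and to identify the associated parabolic Higgs bundle $(E^*,\theta)$ with the graded $(\Gr_F V^*,\Gr_F \nabla)$. Then Theorem \ref{khcorr} (Kobayashi--Hitchin, in the Simpson--Mochizuki form) applies verbatim: every tame harmonic bundle over $U=X-D$ corresponds to a $\mu_H$-polystable parabolic Higgs bundle with vanishing parabolic Chern classes. This gives the first claim.

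For the nilpotency of $\theta$, the plan is to use Griffiths' transversality, which is built into the definition of a $\C$-PVHS. Since $\mathcal{D}^{1,0}$ shifts $F^p$ to $F^{p-1}$, the induced map $\theta=\Gr_F\nabla$ on the graded bundle $E=\bigoplus_p \Gr_F^p V$ sends $\Gr_F^p V$ into $\Omega_X^1(\log D)\otimes \Gr_F^{p-1}V$. Consequently, at any point $x\in X$ and for any cotangent vector $\xi\in T_x^*X$, the endomorphism $\theta(\xi)\in \mathrm{End}(E_x)$ strictly decreases the $p$-grading, so it is nilpotent as a linear map on the finite-dimensional fiber $E_x$. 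Its characteristic polynomial is therefore $t^{\rank E}$, and all eigenvalues are zero. Varying $\xi$, this shows that the multivalued $1$-form eigenvalues of $\theta$ all vanish identically, which is precisely the stated nilpotency.

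There is essentially no obstacle here beyond correctly quoting Proposition \ref{vhsharmonic} and Theorem \ref{khcorr}; the only point that deserves a sentence of justification is that the Hermitian form $k_H$ of a $\C$-PVHS is not positive definite, but that passing to $K_H$ (positive on even $H^p$, the negative of $k_H$ on odd $H^p$) produces a genuine Hermitian metric for which the harmonic bundle axioms hold, so that the hypotheses of Theorem \ref{khcorr} are met. The identification $(E^*,\theta)=(\Gr_F V^*,\Gr_F\nabla)$ furnished by Proposition \ref{vhsharmonic} then matches this harmonic bundle with the Higgs bundle in the statement, completing the proof.
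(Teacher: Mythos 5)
Your proposal is correct and follows the same route as the paper: the polystability and vanishing of parabolic Chern classes come from Proposition \ref{vhsharmonic} together with Theorem \ref{khcorr}, and the nilpotency is exactly the paper's one-line observation that $\theta$ shifts the Hodge grading by $-1$, which you have simply spelled out fiberwise.
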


\begin{proof}
  The last statement follows from the fact that $\theta$ shifts the
  grading by $-1$.
\end{proof}

\bigskip

Next, we recall some facts about the moduli space of parabolic Higgs
sheaves and the Hitchin fibration from Yokogawa \cite{yokogawa1}.
Let $\Gamma$ denote the following data:
 a positive integer $m$, system of rational weights  $0<\alpha_1<\alpha_2<...<\alpha_l<1$ and polynomials 
$P, P_1, ..., P_l$. Consider the following contravariant functor:
$$\overline{\mathfrak{M}}(X, D,\Gamma): Sch/\mathbb{C}\longrightarrow
\mathcal{S}et$$
which assigns to any scheme $S$, the set of
isomorphism classes of flat families of  rank $m$ parabolic Higgs sheaves
$(E^*, \theta)$ over $(X\times S, D\times S)$ with the following
properties
\begin{enumerate}
\item For each closed point $s$ of $S$, 
$({E_s}^*, \theta_s):=(E^*, \theta)_s$  has weights $\alpha$ with
quasi-parabolic  structure $E_s\supsetneq F^1(E_s)\supsetneq \ldots\supsetneq F^l(E_s)\supsetneq E_s(-D\times\{s\})$.
 
\item  $({E_s}^*, \theta_s)$ is $p$-semistable.

\item The Hilbert polynomial of $E_s$ with respect to polarization $H$
  is $P$. The Hilbert polynomials of $E_s/{F^i(E_s)}$ are $P_i$.

\item  The parabolic Chern classes of $({E_s}^*, \theta_s)$ vanish. 

\end{enumerate}
Define an equivalence relation on $\overline{\mathfrak{M}}(X,
D,\Gamma)(S)$ by
 $(E^*, \theta)\sim (E'^*, \theta')$ if and only if there exists a
  line bundle $\mathcal{L}$ over $S$, such that $Gr^W_*(E^*, \theta)\cong
  Gr^{W'}_*(E'^*, \theta')\otimes\mathcal{L}$, where $W, W'$ are
  Jordan-H\"older filtrations (defined on \cite[p 457]{yokogawa1}).
Define
 $$\overline{\mathcal{M}}(X, D,\Gamma)(S) = \overline{\mathfrak{M}}(X, D,\Gamma)(S)/\sim$$
We denote by $\mathcal{M}(X, D,\Gamma)$ 
the subfunctor of $\overline{\mathcal{M}}(X, D,\Gamma)$ consisting  of all flat families of $p$-stable
parabolic Higgs bundles. Then we have the following theorem given by
Yokogawa \cite{yokogawa1}.

\begin{thm}[Yokogawa] \label{moduli} There exist quasiprojective  moduli spaces
  ${M}(X, D,\Gamma)\subset \overline{{M}}(X, D,\Gamma)$
coarsely representing  the functors
  $\mathcal{M}(X, D,\Gamma)$ and
  $\overline{\mathcal{M}}(X, D,\Gamma)$ respectively.
The closed points of  ${M}(X, D,\Gamma)$ are in one to one
  correspondence to the isomorphic classes of $p$-stable parabolic
  Higgs bundles $(E^*, \theta)$ of rank $m$ over $(X, D)$ with weights $\alpha$,
  Hilbert polynomials $\mathcal{P}$, and vanishing Chern classes.
\end{thm}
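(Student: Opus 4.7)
The plan is to follow the GIT construction of Simpson for moduli of Higgs sheaves, adapted to the parabolic setting as in Maruyama--Yokogawa, and to bring in the nilpotency condition $\theta\wedge\theta=0$ and the vanishing of parabolic Chern classes as closed conditions.

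First I would establish boundedness. Fixing the rank $m$, weights $\alpha$, Hilbert polynomials $P, P_1,\ldots,P_l$, and demanding $p$-semistability forces a uniform bound on the slopes of saturated subsheaves, so a Grothendieck--Kleiman-type argument shows that the family of $E$ underlying members of $\overline{\mathfrak{M}}(X,D,\Gamma)(\Spec \mathbb{C})$ is bounded. In particular, there is an integer $n\gg 0$ such that for every such $E$, the sheaf $E(n)$ is globally generated with vanishing higher cohomology, yielding a surjection $\mathcal{O}_X(-n)^{P(n)}\twoheadrightarrow E$ and embedding the data into a product of a Quot scheme $Q$ (for $E$), a flag scheme over $Q$ (for the quasi-parabolic filtration $F^\bullet(E)$ with the prescribed $P_i$), and a linear scheme (for $\theta\in \mathrm{Hom}(E,\Omega_X^1(\log D)\otimes E)$). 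The conditions $\theta\wedge\theta=0$ and $\theta(F^i(E))\subset \Omega_X^1(\log D)\otimes F^i(E)$ are closed, cutting out a locally closed subscheme $R\subset Q\times\mathrm{Flag}\times\mathrm{Hom}$. The natural $PGL(P(n))$-action on $R$ by change of basis of $\mathbb{C}^{P(n)}\cong H^0(E(n))$ has orbits precisely the isomorphism classes of the parabolic Higgs data.

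Next I would linearize the action. Following Yokogawa, choose a polarization on the flag factor weighted by the $\alpha_i$, and tensor with a suitable very ample bundle from the Quot side; this produces a $PGL$-linearized ample line bundle $\mathcal{L}$ whose Hilbert--Mumford numerical criterion reproduces precisely the parabolic $p$-(semi)stability inequality of Definition~\ref{df:stable}(2), once $n$ is chosen large enough so that Hilbert polynomial comparisons can be read off from dimensions of global sections. This is the main technical step; the key identity matches the $\alpha_{j+1}-\alpha_j$ weights in \eqref{eq:parpE} against the GIT weights of the flag linearization, while the Higgs field contributes only to the stability test (via the $\theta$-invariance condition on test subsheaves), not to the numerical data. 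The condition of vanishing parabolic Chern classes is a closed-and-open condition cut out on $R$ using Lemma~\ref{lemma:Chern}-type formulas, so restricting to the corresponding union of components is harmless.

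Finally I would apply Mumford's GIT to obtain $\overline{M}(X,D,\Gamma):=R^{ss}\sslash PGL(P(n))$ as a quasiprojective scheme, with stable locus $M(X,D,\Gamma):=R^s/PGL(P(n))$ open inside. The $S$-equivalence produced by GIT (identifying points whose orbit closures meet) corresponds exactly to the Jordan--H\"older equivalence defining $\overline{\mathcal{M}}(X,D,\Gamma)$, so the resulting schemes corepresent the respective functors. On the stable locus, orbits are already closed and stabilizers are scalar, yielding the geometric quotient and the asserted bijection between closed points of $M(X,D,\Gamma)$ and isomorphism classes of $p$-stable parabolic Higgs bundles of the prescribed type. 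The chief obstacle, as in Yokogawa's paper, is the careful matching of GIT numerical (semi)stability with parabolic $p$-(semi)stability; boundedness and closedness of $\theta\wedge\theta=0$ are comparatively routine, but this matching requires delicate bookkeeping of the Hilbert polynomials of all the graded pieces $F^i/F^{i+1}$.
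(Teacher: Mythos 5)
The paper offers no proof of Theorem \ref{moduli}: it is quoted from Yokogawa \cite{yokogawa1}, and the only construction details the authors record afterwards are the existence of the scheme $R^{ss}$ with its special linear group action, the inclusions $M(X,D,\Gamma)\subset\overline{M}(X,D,\Gamma)\subset R^{ss}\sslash G$, and the caveat of Remark \ref{rmk:Mpt}. Measured against that, your sketch is architecturally the right reconstruction of what Yokogawa actually does: boundedness from $p$-semistability with fixed numerical data, a parameter scheme built from a Quot scheme together with the flag data for the quasi-parabolic filtration, a linearization weighted by the $\alpha_i$ so that the Hilbert--Mumford criterion reproduces the parabolic Hilbert-polynomial inequalities, a GIT quotient in which $S$-equivalence matches the Jordan--H\"older equivalence $\sim$, and a geometric quotient with scalar stabilizers on the stable locus giving the bijection on closed points. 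The vanishing of parabolic Chern classes being open-and-closed in flat families is also handled correctly.

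The one place where your route diverges from Yokogawa's, and where your sketch has a genuine soft spot, is the treatment of the Higgs field. You place $\theta$ in a linear Hom-scheme over the Quot/flag data and assert that it ``contributes only to the stability test, not to the numerical data.'' With a trivially linearized Hom factor, the Hilbert--Mumford criterion a priori tests one-parameter degenerations coming from \emph{all} saturated subsheaves compatible with the framing, whereas $p$-(semi)stability in Definition \ref{df:stable}(2) only tests $\theta$-invariant ones; reconciling the two is exactly the content of the theorem, not a bookkeeping afterthought. Yokogawa, following Simpson \cite{simp1,simp2}, sidesteps this by encoding $(E^*,\theta)$ as a single (parabolic) sheaf of modules over a ring built from $\Omega^1_X(\log D)$ --- his $\tilde\Omega$-pairs --- so that subobjects in the GIT analysis are automatically $\theta$-invariant and the stability comparison reduces to the pure parabolic-sheaf case of Maruyama--Yokogawa \cite{my}. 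A Nitsure-style Hom-scheme parametrization such as yours can be pushed through, but the sentence in which you defer ``the careful matching'' is precisely where the argument would have to live; as written it is a named gap rather than a proof. Since the paper itself only cites the result, this does not affect anything downstream in the text.
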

 
\begin{rmk}\label{rmk:Mpt}
Yokogawa states the second  result for a space of stable sheaves slightly larger than our ${M}(X,
  D,\Gamma) $.  Points of $\overline{M}(X, D,\Gamma)$ are $\sim$-equivalence
  classes of $p$-semistable sheaves on $X$.  
\end{rmk}

We will need to recall a few details of the construction.
Yokogawa \cite[\S 2]{yokogawa1} shows there is a scheme $R^{ss}$ on which a special linear
group $G$ acts, such that there are inclusions
$${M}(X, D,\Gamma)\subset\overline{{M}}(X,
D,\Gamma)\subset{R}^{ss}\sslash G$$ 
where the last space is the GIT
quotient. Let $R^0=R^{0}(X, D,\Gamma)$ denote the preimage of
$\overline{{M}}(X, D,\Gamma)$ in $R^{ss}$.  Then also by construction,
$X\times R^0$ comes with a family of parabolic sheaves inducing the
quotient map $R^0\to \overline{M}(X, D,\Gamma)$. We will refer to
this as the semi-universal sheaf.

Yokogawa has also generalized the construction and properties of the
Hitchin map of Simpson \cite{simp1,simp2} in the non-log case.

\begin{thm} [Yokogawa] \label{hitchin}
There is a  Hitchin map 
$$\mathfrak{h}: \overline{{M}}(X, D,\Gamma)\longrightarrow \mathfrak{V}(X, m):=
\bigoplus_{i=0}^{m-1} H^0(X, S^i\Omega^1_{X}(\log D)).$$
given by sending $(E^*, \theta)$ to its characteristic
polynomial. This map is projective.
\end{thm}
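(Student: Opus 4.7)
The plan is to construct the morphism fiberwise, promote it to a map of schemes using the semi-universal family, and then verify projectivity by the valuative criterion.

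First I would construct the map on closed points. Given $(E^{*},\theta)$, the Higgs field $\theta\colon E\to \Omega_X^1(\log D)\otimes E$ is an $\OO_X$-linear map of coherent sheaves twisted by the bundle $\Omega_X^1(\log D)$, so the elementary symmetric functions of its ``eigenvalues'' are globally defined sections $\sigma_i(\theta)\in H^0(X,S^i\Omega_X^1(\log D))$. Concretely, one can compute them locally as $\tr\wedge^i\theta$ after choosing a local frame of $E$; the Higgs integrability $\theta\wedge\theta=0$ guarantees that the characteristic polynomial makes sense as an element of $\bigoplus S^i\Omega_X^1(\log D)$. Assembling these produces the point $\mathfrak{h}(E^{*},\theta)\in\mathfrak{V}(X,m)$.

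Next I would upgrade this to a morphism of schemes. On $X\times R^0$ we have the semi-universal parabolic Higgs sheaf $(\mathcal{E}^{*},\Theta)$; performing the characteristic-polynomial construction in families (using the projection formula and flat base change, since $X$ is projective, to push forward $S^i\Omega_X^1(\log D)\otimes\OO_{R^0}$) yields a morphism $R^0\to\mathfrak{V}(X,m)$. Its values depend only on the conjugacy class of $\theta$, so it is $G$-invariant; it is also unchanged by tensoring with a line bundle from the base (because $\theta$ is not altered) and, crucially, invariant under passing to the associated graded of a Jordan--H\"older filtration (as the characteristic polynomial of a block triangular matrix equals that of its diagonal blocks). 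Hence it factors through the $\sim$-quotient to give $\mathfrak{h}\colon\overline{M}(X,D,\Gamma)\to\mathfrak{V}(X,m)$.

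Finally I would prove projectivity. Since the target is affine and $\overline{M}(X,D,\Gamma)$ is quasi-projective, projectivity reduces to properness, which I would check via the valuative criterion. Given a DVR $A$ with fraction field $K$, a map $\Spec K\to\overline{M}(X,D,\Gamma)$ corresponding to a $p$-semistable family $(E_K^{*},\theta_K)$ on $X_K$, and an extension of the characteristic polynomial to $\Spec A$, one must (after a finite base change) produce a flat $p$-semistable extension $(E_A^{*},\theta_A)$ on $X_A$. One starts with any torsion-free coherent extension respecting $\theta$ and the parabolic filtration, and if the closed fiber fails to be $p$-semistable, one replaces $E_A$ by the Langton modification determined by the maximal destabilizing parabolic Higgs subsheaf of the special fiber.

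The hard part is showing this Langton procedure terminates. In the classical (non-Higgs, non-parabolic) setting, termination follows because successive modifications can only increase a bounded invariant. Here one must (a) check that both the Higgs condition $\theta_A(E_A)\subseteq\Omega^1_{X_A/A}(\log D_A)\otimes E_A$ and the parabolic structure propagate through the modification, and (b) produce a strictly monotone bounded invariant. The key additional input in the Higgs case is that the extension of the characteristic polynomial is \emph{fixed} on $\Spec A$, which bounds the size of $\theta$ on any subquotient and so prevents the destabilizing subsheaf from having arbitrarily large slope; this is what closes the termination argument and yields properness, hence projectivity of $\mathfrak{h}$.
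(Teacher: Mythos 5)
The paper does not actually prove this statement: it is quoted as a theorem of Yokogawa \cite{yokogawa1} (generalizing Simpson's Hitchin map to the parabolic, logarithmic setting), so there is no in-paper argument to compare yours against. Judged on its own, your outline follows the standard Simpson--Yokogawa strategy and the first two-thirds are sound: the integrability $\theta\wedge\theta=0$ is indeed what places the coefficients of the characteristic polynomial in $H^0(X,S^i\Omega^1_X(\log D))$; performing the construction on the semi-universal family over $R^0$ and checking invariance under the group action, under twisting by line bundles from the base, and under passage to the Jordan--H\"older graded object correctly descends the map to $\overline{M}(X,D,\Gamma)$; and since $\overline{M}(X,D,\Gamma)$ is quasi-projective and the target is affine, projectivity does reduce to properness.

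The gap is in the properness step, which is the entire content of the theorem. The semistable-reduction argument you invoke --- extend the family over the DVR, then run a Langton-type elementary-modification procedure on the special fiber --- is the right skeleton, but both halves are asserted rather than proved. First, the existence of an initial torsion-free, $\theta$-invariant extension over $\Spec A$ carrying a flat quasi-parabolic filtration with the prescribed Hilbert polynomials is itself nontrivial; this is where the hypothesis that the characteristic polynomial extends over $\Spec A$ is genuinely used (via boundedness of the relevant family of parabolic Higgs sheaves with fixed numerical and spectral data), and your sketch does not explain how to produce it. Second, your termination argument --- that the fixed characteristic polynomial ``bounds the size of $\theta$ on any subquotient and so prevents the destabilizing subsheaf from having arbitrarily large slope'' --- is not the mechanism of Langton's proof and does not obviously close the argument: in Langton's scheme one shows that the maximal destabilizing subsheaves of the successive special fibers eventually stabilize and would then descend to a destabilizing subobject of the generic fiber, a contradiction; making this work while preserving the Higgs field, the parabolic filtration, and the numerical data $\Gamma$ at every modification is precisely the technical heart of Yokogawa's paper. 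As written, your proposal is a correct roadmap but not a proof of the one assertion (properness) that carries real content.
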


\begin{rmk}\label{rmk:nilp}
Note that $\mathfrak{h}(E^*, \theta)=0$ if and only if $\theta$ is
nilpotent.
\end{rmk}

\section{Vanishing Theorem: nilpotent case }

In this section, we prove a vanishing theorem for the de Rham complex of
any $\mu_H$-semistable parabolic Higgs bundle $(E^*, \theta)$,
 with vanishing parabolic Chern classes and nilpotent Higgs field $\theta$.

\begin{lemma}\label{lemma:perturbwts}
  Let $(E^*,\theta)$ be a $\mu_H$-semistable parabolic Higgs bundle. Then there exist $\epsilon >0$
  such that any parabolic Higgs bundle $\epsilon$-close to
  $(E^*,\theta)$ is $\mu_H$-semistable.
\end{lemma}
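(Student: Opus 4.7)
The plan is to combine boundedness of potential destabilizers with the fact that the parabolic slope depends continuously (indeed affine-linearly) on the weights.

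For any coherent subsheaf $V\subseteq E$ carrying the induced parabolic structure $V^*$, the correction $|\text{par-}\mu_H(V^*)-\mu_H(V)|$ is bounded by a constant $C_0$ depending only on $\rank(E)$ and $\deg D$, since all normalized weights lie in $[0,1)$. Hence $\mu_H$-semistability of $(E^*,\theta)$ forces every $\theta$-invariant saturated $V$ to satisfy $\mu_H(V)\le \mu_H(E)+2C_0$. By Grothendieck's boundedness theorem, the set of such $V$ forms a bounded family with only finitely many Hilbert polynomials. Combined with the fact that the induced quasi-parabolic structure on $V$ is encoded by the integer sequence $(\rank(V\cap F^i(E)))_i$, which takes only finitely many values, one reduces to checking finitely many ``quasi-parabolic types'' $\tau$.

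For each type $\tau$, the difference
\[
f_\tau(\alpha) \;:=\; \text{par-}\mu_H(V^*) - \text{par-}\mu_H(E^*)
\]
is an affine-linear function of the normalized weight vector $\alpha=(\alpha_i)$, with coefficients bounded uniformly in terms of $\rank(E)$ and $\deg D$ (as visible from the formula $\text{par-deg}(F^*)=c_1(F)\cdot H^{d-1}+\sum_j \text{(weight data)}\cdot D_j\cdot H^{d-1}$ appearing earlier). Semistability at the original weights $\alpha^{(0)}$ gives $f_\tau(\alpha^{(0)})\le 0$ for each $\tau$. Since there are only finitely many such $f_\tau$, each Lipschitz in $\alpha$ with a uniform constant, the goal is to pick $\epsilon>0$ small enough that every $\epsilon$-close perturbation keeps each $f_\tau$ non-positive.

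The main obstacle is the equality case $f_\tau(\alpha^{(0)})=0$, i.e. when $(E^*,\theta)$ is strictly semistable rather than stable: a naive weight perturbation can a priori push such an $f_\tau$ strictly positive, because the rank vectors of $V^*$ and $E^*$ need not agree at each weight level. To handle this one appeals to the structure of the Jordan-H\"older filtration of $(E^*,\theta)$: the destabilizers attaining equality must arise (up to isomorphism) as the JH graded pieces, so the linear conditions defining their equal-slope locus in weight-space impose enough structure to control the sign of $f_\tau$ on admissible $\epsilon$-perturbations. Alternatively one may invoke the moduli-theoretic perspective via Theorem \ref{moduli}, where openness of semistability is built into the flat family of parabolic Higgs sheaves parameterized by the weights.
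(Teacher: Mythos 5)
Your first half is sound and in fact more careful than the paper's own argument: the paper merely observes that the coefficients $\rank V\cdot d_i(E)-\rank E\cdot d_i(V)$ are integers and concludes that a uniform $\epsilon$ exists, which implicitly requires exactly the boundedness and finiteness you supply (a positive value of an integer-coefficient linear form in the weights can be arbitrarily small unless the coefficients range over a finite set). One correction there: Grothendieck's lemma bounds the family of saturated subsheaves whose slope is bounded \emph{below} (equivalently, quotients with slope bounded above); the set of $V$ with $\mu_H(V)\le\mu_H(E)+2C_0$ is not a bounded family. The finite list you want is the family of saturated $\theta$-invariant $V$ with $\mu_H(V)\ge\mu_H(E)-2C_0$, since any $V$ of smaller ordinary slope can never become parabolically destabilizing for weights in $[0,1)$. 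With that fix, the reduction to finitely many affine-linear functions $f_\tau$ of the weight vector is correct and disposes of every type with $f_\tau(\alpha^{(0)})<0$.

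The genuine gap is exactly where you locate it, but neither of your two escape routes is an argument, and no argument can exist: in the strictly semistable case the statement is false as written. Take $X=\PP^2$, $H=\OO(1)$, $D=D_1+D_2$ with $D_1$ a conic and $D_2$ a line, and $E^*=L_1^*\oplus L_2^*$ with $L_1=\OO(-1)$ carrying weight $1/2$ along $D_1$, $L_2=\OO$ with the trivial structure, and $\theta=0$. Both summands have parabolic degree $0$, so $E^*$ is polystable with a single positive normalized weight $\alpha_1=1/2$. Perturbing $\alpha_1\mapsto 1/2+\delta$ gives $\textnormal{par-deg}(L_1^*)=2\delta$ against $\textnormal{par-}\mu_H(E^*)=\delta$, so $\delta>0$ makes $L_1$ destabilizing and $\delta<0$ makes $L_2$ destabilizing; no $\epsilon$ works. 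Your appeal to the Jordan--H\"older pieces cannot rescue this (here they are the two line bundles, which are automatically stable and impose no constraint on how the equal-slope locus sits in weight space), and theorem \ref{moduli} concerns flat families with \emph{fixed} weights, so ``openness of semistability'' there says nothing about varying the stability condition itself. To be fair, the paper's own proof dismisses the semistable case with the same one-line appeal to the Jordan--H\"older filtration. What is actually needed downstream (proposition \ref{prop:ration}) is only that \emph{some} rational perturbation preserving the vanishing of the parabolic Chern classes --- equivalently, keeping $\textnormal{par-deg}=0$ on each graded piece --- remains semistable, and an honest repair has to exploit that extra constraint rather than prove the lemma for all $\epsilon$-close perturbations.
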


\begin{proof}
 
Suppose $(E^*, \theta)$ is stable. Let us denote the normalized weights
by $\{\alpha_1,...,\alpha_r\}$, and the quasiparabolic
structure by $E=F^0(E)\supsetneq F^1(E)\supsetneq \ldots \supsetneq
F^r(E)\supsetneq E(-D)$. 
Denote the degree of each bundle $F^i(E)$ by $d_i(E)$ and $d_i(V) =
\deg V\cap F^i(E)$ for any subsheaf $V\subseteq E$.
Suppose $V$ satisfies the conditions in definition \ref{df:stable}, then
$$\frac{\sum^r_{i=0}d_i(V)(\alpha_{i+1}-\alpha_i)}{\rank V}< \frac{\sum^r_{i=0}d_i(E)(\alpha_{i+1}-\alpha_i)}{\rank E}$$
or equivalently $$\sum^r_{i=0}(\rank V\cdot d_i(E)-\rank E\cdot d_i(V))(\alpha_{i+1}-\alpha_i)>0. $$
Since for any $V,\  \rank V\cdot d_i(E)-\rank E\cdot d_i(V)$ are
integers, we can find $\epsilon > 0$ such that
$$\sum^r_{i=0}(\rank V\cdot d_i(E)-\rank E\cdot d_i(V))(\alpha'_{i+1}-\alpha'_i)>0 $$ 
for  $|\alpha'_i-\alpha_i|<\epsilon$.

If $(E^*, \theta)$ is semistable, we have  the Jordan-H{\"o}lder
filtration of $(E^*, \theta)$ $$0\subset W_1\subset
W_2\subset...\subset (E^*, \theta)$$ 
such that the quotients $W_i/W_{i-1}$ are stable. We 
apply the above argument to these subquotients.
\end{proof}

\begin{prop}\label{prop:ration}
 Let $(E^*,\theta)$ be a $\mu_H$-semistable parabolic Higgs bundle with zero parabolic Chern classes. There exists
 a $\mu_H$-semistable parabolic Higgs bundle $({E'}^*,\theta')$ with  the same properties and rational weights such that $(E,\theta)=(E', \theta')$.
\end{prop}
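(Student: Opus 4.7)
The plan is to combine the two preparatory lemmas already in place: Lemma \ref{lemma:QapproxCh}, which lets us perturb the weights into $\Q$ while preserving the vanishing of parabolic Chern classes, and Lemma \ref{lemma:perturbwts}, which says $\mu_H$-semistability is open under small perturbations of weights. The key observation making this work is that both lemmas change only the \emph{normalized weights}, leaving the underlying quasi-parabolic bundle $E \supsetneq F^1(E) \supsetneq \cdots \supsetneq F^r(E) \supsetneq E(-D)$ unchanged. In particular, the Higgs field can be reused verbatim.

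First I would apply Lemma \ref{lemma:perturbwts} to $(E^*,\theta)$ to obtain an $\epsilon_1 > 0$ such that every parabolic Higgs bundle that is $\epsilon_1$-close to $(E^*, \theta)$ is $\mu_H$-semistable. Next I would apply Lemma \ref{lemma:QapproxCh} with $\epsilon = \epsilon_1$ to the parabolic bundle $E^*$ (forgetting $\theta$ for the moment) to produce a parabolic bundle $E'^*$ that is $\epsilon_1$-close to $E^*$, has trivial parabolic Chern classes, and has weights in $\Q$. By construction, $E'^*$ has the same underlying quasi-parabolic structure as $E^*$, so in particular the filtration $F^\bullet(E)$ is unaltered.

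Now set $\theta' := \theta$. The Higgs field condition $\theta(E^\alpha) \subseteq \Omega_X^1(\log D) \otimes E^\alpha$ is equivalent to the condition $\theta(F^i(E)) \subseteq \Omega_X^1(\log D) \otimes F^i(E)$ on the quasi-parabolic pieces, which depends only on the common quasi-parabolic structure; hence $\theta$ defines a parabolic Higgs field on $E'^*$, and $(E',\theta') = (E,\theta)$ as a pair on $X$. By the choice of $\epsilon_1$, $(E'^*, \theta')$ is $\mu_H$-semistable, and its parabolic Chern classes vanish by construction of $E'^*$.

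The one subtle point to verify is that Lemma \ref{lemma:perturbwts} is applied to the underlying \emph{Higgs} bundle, so the $\theta$-invariance requirement on the competing subsheaves $V$ remains the same as for $(E^*,\theta)$; since the quasi-parabolic structure is unchanged, the same subsheaves $V \subseteq E$ are tested on both sides, and only the numerical inequality on slopes needs to survive the perturbation, which is precisely what Lemma \ref{lemma:perturbwts} ensures (using Jordan--H\"older in the semistable case). There is no real obstacle; the proposition is essentially a bookkeeping step that packages Lemmas \ref{lemma:QapproxCh} and \ref{lemma:perturbwts} into the form required later when invoking Yokogawa's moduli space, which demands rational weights.
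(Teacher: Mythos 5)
Your proof is correct and is essentially the paper's own argument: the paper's proof of this proposition is the single line ``This follows from lemma \ref{lemma:QapproxCh} and \ref{lemma:perturbwts},'' and your write-up simply supplies the (correct) details of how those two lemmas combine, including the key point that only the normalized weights change while the quasi-parabolic structure and Higgs field are reused verbatim.
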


\begin{proof}
This follows from lemma \ref{lemma:QapproxCh} and \ref{lemma:perturbwts}.

\end{proof}

We discuss the penultimate forms of the main result.
Given any parabolic Higgs bundle $(E^*,\theta)$, we have the associated de Rham complex
 $$\DR(E,\theta) = E\stackrel{\theta}{\to} \Omega_X^1(\log D)\otimes E\to
 \ldots.$$

\begin{thm}\label{thm:nilvan}
Let $(E^*,\theta)$ be a $\mu_H$-semistable parabolic Higgs bundle on
$(X, D)$ with vanishing Chern classes and with $\theta$ nilpotent. 
Let $L$ be an ample  line bundle on $X$.
Then 
$$
 \mathbb{H}^i(X,\DR(E,\theta)\otimes L)=0  
$$
for $i> d$, where $d=\dim X$.
\end{thm}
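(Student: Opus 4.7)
The plan is to reduce the theorem to the special case proved in \cite{arapura}, where the parabolic structure is trivial, via two successive reductions: rationalization of the weights and passage to a Kawamata cover. The crucial observation throughout is that the complex $\DR(E,\theta)\otimes L$ depends only on the underlying holomorphic bundle $E$ and the Higgs field $\theta$, not on the parabolic filtration, so perturbing the weights does not affect the cohomology we are computing.

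First, I would invoke proposition~\ref{prop:ration} to replace $E^*$ by an $\epsilon$-close parabolic structure ${E'}^*$ with rational weights lying in $\tfrac{1}{N}\ZZ$, still $\mu_H$-semistable with vanishing parabolic Chern classes; since the underlying $E$ and $\theta$ are unchanged, it suffices to prove the theorem under this assumption. Next, by theorem~\ref{thm:biswas}, there is a Kawamata cover $\pi:Y\to X$ with Galois group $G$ and a $G$-equivariant Higgs bundle $(\mathcal{E},\vartheta)$ on $Y$ corresponding to $({E'}^*,\theta)$. By lemma~\ref{lemma:Chern} the Chern classes of $\mathcal{E}$ vanish; by lemmas~\ref{lemma:stab} and \ref{lemma:equivstable} the pair $(\mathcal{E},\vartheta)$ is $\mu_{\pi^*H}$-semistable in the ordinary (non-equivariant) sense; $\vartheta$ remains nilpotent because this is a pointwise condition that is preserved by the correspondence; and $\pi^*L$ is ample since $\pi$ is finite. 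Applying the special case of the theorem from \cite{arapura} on $Y$ then yields
\[
\mathbb{H}^i(Y,\DR(\mathcal{E},\vartheta)\otimes\pi^*L)=0 \qquad\text{for } i>d.
\]

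To finish, I would exhibit $\mathbb{H}^i(X,\DR(E,\theta)\otimes L)$ as a direct summand of the hypercohomology just shown to vanish. The Biswas correspondence already gives $E=(\pi_*\mathcal{E})^G$, and combined with the identity $(\pi_*\Omega_Y^k(\log\tilde D))^G = \Omega_X^k(\log D)$ for the Kawamata cover (verified in local coordinates $y^{k_iN}=x$), one obtains a canonical isomorphism of complexes $\DR(E,\theta)\otimes L \cong (\pi_*(\DR(\mathcal{E},\vartheta)\otimes\pi^*L))^G$. Since $\pi$ is finite, $\pi_*$ is exact and computes hypercohomology directly; taking $G$-invariants in characteristic zero then splits off $\mathbb{H}^i(X,\DR(E,\theta)\otimes L)$ as a direct summand of $\mathbb{H}^i(Y,\DR(\mathcal{E},\vartheta)\otimes\pi^*L)$. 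The main obstacle in carrying this out is precisely the last compatibility check: verifying that $\theta$ and $\vartheta$ intertwine correctly with the $G$-invariant pushforward so that the identification holds as complexes, and not merely in each degree separately.
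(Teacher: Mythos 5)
Your proposal is correct and follows essentially the same route as the paper: rationalize the weights via proposition~\ref{prop:ration} (noting the underlying $(E,\theta)$ is unchanged), pass to the Kawamata cover through Biswas' correspondence with the stability, Chern class, and nilpotency checks, apply the main theorem of \cite{arapura} on $Y$, and descend by identifying $\DR(E,\theta)\otimes L$ with the $G$-invariants of $\pi_*(\DR(\mathcal{E},\vartheta)\otimes\pi^*L)$ using exactness of $\pi_*$ and $(-)^G$. The only cosmetic difference is that the paper justifies nilpotency of $\vartheta$ explicitly via the inclusion $\mathcal{E}\subset\pi^*(E\otimes\mathcal{O}_X(D))$ and phrases the descent through the isomorphism $\Omega_Y^i(\log\tilde D)\cong\pi^*\Omega_X^i(\log D)$ plus the projection formula, which is the same compatibility you flag as the main thing to check.
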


\begin{proof}
  For the $\mu_H$-semistable parabolic Higgs bundle $(E^*, \theta)$ on
  $X$, with trivial parabolic Chern classes, and $\theta$ nilpotent,
  by proposition \ref{prop:ration}, we can find a new
  $\mu_H$-semistable parabolic Higgs bundle $(E'^*, \theta)$ on $X$,
  with trivial parabolic Chern classes, $\theta$ nilpotent, and
  rational weights, such that $(E', \theta)=(E, \theta)$.  Thus we can
  assume that the weights of $(E^*, \theta)$ are in $\frac{1}{N}\Z^n$,
  for some integer $N$.

  Consider the Galois covering $\pi: Y\to X$ as described in
  section~\ref{sect:biswas}. By theorem \ref{thm:biswas}, \ lemma
  \ref{lemma:Chern}, and lemma \ref{lemma:stab}, we can find a
  $\mu_{\pi^*H}$-semistable $G$-equivariant Higgs bundle
  $(\mathcal{E}, \vartheta)$ on $Y$, such that $c_i(\mathcal{E})=0$
  and $\vartheta$ is nilpotent. The nilpotency of $\vartheta$ is
  coming from Biswas's construction of $\mathcal{E}$. Actually by
  Biswas \cite[(3.3)]{biswas},
  $\mathcal{E}\subset \pi^*(E\otimes\OO_X(D))$, over which $\theta$
  acts nilpotently. Also, by lemma \ref{lemma:equivstable}, we
    know that $(\mathcal{E}, \vartheta)$ is $\mu_{\pi^*H}$-semistable
    as a Higgs bundle.
Now we can apply the first main theorem of
  \cite{arapura} to conclude
$$\mathbb{H}^i(Y, \DR(\mathcal{E},\vartheta)\otimes \pi^*L)=0$$

Note that $Y\to X$ is a tower of cyclic covers, a standard calculation yields
\begin{equation}
  \label{eq:piOmega}
\Omega_Y^i(\log \tilde D)\cong \pi^*\Omega_X^i(\log  D)  
\end{equation}
So that  by the projection formula
\begin{equation}
  \label{eq:3}
  \begin{aligned}
    \pi_*(\Omega_Y^i(\log \tilde D)\otimes \E\otimes \pi^*L)^G &\cong
\Omega_X^i(\log  D)\otimes (\pi_*\E)^G \otimes L\\
&\cong \Omega_X^i(\log  D)\otimes E \otimes L\\
  \end{aligned}
\end{equation}
This can be seen to induce an isomorphism of complexes
$$\pi_*(\DR(\mathcal{E},\vartheta)\otimes \pi^*L)^G\cong \DR(E,
\theta)\otimes L$$
The next lemma shows that the natural map
$$\mathbb{H}^i(X, \DR(E, \theta)\otimes L)\to \mathbb{H}^i(Y, \DR(\mathcal{E},\vartheta)\otimes \pi^*L)^G$$
is an isomorphism.
\end{proof}

\begin{lemma}
  Given any cochain complex of complex vector spaces
  $(C^{\centerdot}, d)$ with a finite group $G$ action, there is a
  natural isomorphism
  $H^i((C^{\centerdot})^G)\cong H^i(C^{\centerdot})^G$.
\end{lemma}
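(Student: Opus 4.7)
The plan is to use the Reynolds (averaging) operator, which is available because we are working over $\mathbb{C}$ and $G$ is finite. First I would define
$$\rho : C^\bullet \to (C^\bullet)^G, \qquad \rho(c) = \frac{1}{|G|}\sum_{g\in G} g\cdot c.$$
Since the differential $d$ is $G$-equivariant, $\rho$ is a morphism of complexes, and it is a retraction of the inclusion $\iota : (C^\bullet)^G \hookrightarrow C^\bullet$. This already gives that the induced map on cohomology $\iota_* : H^i((C^\bullet)^G) \to H^i(C^\bullet)$ is split injective onto a subspace of $H^i(C^\bullet)^G$ (the image is $G$-invariant since the action on $(C^\bullet)^G$ is trivial).

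The key step is then to check that $\iota_*$ surjects onto $H^i(C^\bullet)^G$. I would take a $G$-invariant cohomology class $[z]\in H^i(C^\bullet)^G$ represented by a cocycle $z\in C^i$. For each $g\in G$ there is some $w_g\in C^{i-1}$ with $g\cdot z - z = dw_g$; averaging gives
$$\rho(z) - z = \frac{1}{|G|}\sum_{g\in G}(g\cdot z - z) = d\Bigl(\frac{1}{|G|}\sum_{g\in G} w_g\Bigr),$$
so $\rho(z)\in (C^i)^G$ is a cocycle representing the same class $[z]$ in $H^i(C^\bullet)$, hence $[z]\in \mathrm{im}(\iota_*)$.

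For injectivity, if $z\in (C^i)^G$ is a cocycle with $z = db$ for some $b\in C^{i-1}$, then $z = \rho(z) = \rho(db) = d\rho(b)$ with $\rho(b)\in (C^{i-1})^G$, so $z$ is already a coboundary in $(C^\bullet)^G$. This is essentially the statement that $(-)^G$ is an exact functor on $\mathbb{C}[G]$-modules for $G$ finite, packaged for complexes. I do not anticipate any real obstacle; the only point to be careful about is ensuring $\rho$ is well-defined and commutes with $d$, both of which are immediate.
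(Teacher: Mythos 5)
Your proof is correct and is essentially the paper's argument made explicit: the paper simply invokes the exactness of $(-)^G$ (Maschke's theorem), and your Reynolds-operator computation is precisely the averaging argument that underlies that exactness. No gap; the two are the same approach at different levels of detail.
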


\begin{proof}
  This follows from the exactness of the functor $(-)^G$ (Maschke's
  theorem).

\end{proof}

\begin{cor}\label{vhsvan1}
For a  Higgs bundle $(E^*, \theta)$ coming from a $\C$-PVHS 
we have 
$$\mathbb{H}^i(\DR(E, \theta)\otimes L)=0$$
for $i>d$.
\end{cor}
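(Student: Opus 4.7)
The plan is to invoke Theorem~\ref{thm:nilvan} after verifying that a Higgs bundle coming from a $\C$-PVHS satisfies its hypotheses. First I would recall that by the corollary immediately following Proposition~\ref{vhsharmonic}, a $\C$-PVHS $(H,\mathcal{D},k_H)$ on $U=X-D$ produces a parabolic Higgs bundle $(E^*,\theta)$ which is $\mu_H$-polystable, has vanishing parabolic Chern classes, and has nilpotent Higgs field $\theta$ (the nilpotency coming from the fact that $\theta$ shifts the Hodge grading $E=\bigoplus \mathrm{Gr}_F^p V^0$ by $-1$, so some power of $\theta$ annihilates $E$).

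Next I would observe that $\mu_H$-polystability, being by definition a direct sum of $\mu_H$-stable objects, trivially implies $\mu_H$-semistability. Hence $(E^*,\theta)$ satisfies every hypothesis of Theorem~\ref{thm:nilvan}: it is a $\mu_H$-semistable parabolic Higgs bundle on $(X,D)$ with vanishing parabolic Chern classes and nilpotent Higgs field.

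Therefore the conclusion of Theorem~\ref{thm:nilvan} applies verbatim, yielding $\mathbb{H}^i(X,\DR(E,\theta)\otimes L)=0$ for all $i>d$. There is really no obstacle here; the whole content has been packaged into the nonabelian Hodge correspondence (Theorem~\ref{khcorr} together with Proposition~\ref{vhsharmonic}) on one side and the main technical vanishing result (Theorem~\ref{thm:nilvan}) on the other. The only conceptual point worth flagging in writing is why the Higgs field of a $\C$-PVHS is nilpotent, which is already indicated in the corollary following Proposition~\ref{vhsharmonic} and can be cited directly.
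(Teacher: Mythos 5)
Your proposal is correct and is exactly the argument the paper intends: the corollary following Proposition~\ref{vhsharmonic} supplies $\mu_H$-polystability (hence semistability), vanishing parabolic Chern classes, and nilpotency of $\theta$, so Theorem~\ref{thm:nilvan} applies directly. The paper gives no separate proof for this corollary precisely because it is this immediate specialization.
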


By the similar argument, we get  a stronger result. We refer to
\cite{lazarsfeld} for definitions of nef and ample vector bundles.

\begin{thm}\label{thm:nilvan2}
Let $(E^*,\theta)$ be a parabolic Higgs bundle satisfying the same
assumptions as theorem \ref{thm:nilvan}.
Let $M$ be a nef  vector bundle on $X$ such that $M(-\Delta)$ is ample
for some $\Q$-divisor supported on $D$ with coefficients  in $[0,1)$.
Then 
$$
 \mathbb{H}^i(X,\DR(E,\theta)\otimes M(-D))=0  
$$
for $i\ge  d+\rank M$, where $d=\dim X$.
\end{thm}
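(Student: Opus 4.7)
The plan is to mirror the proof of Theorem \ref{thm:nilvan} verbatim, substituting a Le Potier/Kawamata-Viehweg refinement of the nilpotent Higgs vanishing theorem at the final step. First, I would apply Proposition \ref{prop:ration} to reduce to the case where the parabolic weights of $(E^*,\theta)$ lie in $\tfrac{1}{N}\ZZ$ for some positive integer $N$: perturbing the weights leaves the underlying bundle $E$, the Higgs field $\theta$, and the twist $M(-D)$ unchanged, so it does not affect the target cohomology. Second, I would take the Kawamata cover $\pi:Y\to X$ of Section \ref{sect:biswas} adapted to $N$ and, via Biswas's correspondence (Theorem \ref{thm:biswas}) together with Lemmas \ref{lemma:Chern}, \ref{lemma:stab}, and \ref{lemma:equivstable}, produce a $\mu_{\pi^*H}$-semistable $G$-equivariant nilpotent Higgs bundle $(\E,\vartheta)$ on $Y$ with $c_i(\E)=0$.

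Because $\pi$ is finite, the pullback $\pi^*M$ is nef and $\pi^*(M(-\Delta)) = \pi^*M(-\pi^*\Delta)$ is ample. After rewriting the coefficients $a_i k_i N$ of $\pi^*\Delta$ as an integer part plus a fractional part, this exhibits $\pi^*M$ (possibly after absorbing an integral divisor supported on $\tilde D$) as satisfying the analogous Kawamata-Viehweg-Le Potier hypothesis on $Y$. Applying the Le Potier refinement of the first main theorem of \cite{arapura} on $Y$ then yields
$$\mathbb{H}^i\bigl(Y, \DR(\E,\vartheta)\otimes \pi^*M \otimes \OO_Y(-\pi^*D)\bigr) = 0 \quad \textnormal{for } i \ge d+\rank M.$$

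Finally, the projection-formula computation \eqref{eq:3} in the proof of Theorem \ref{thm:nilvan} generalizes, using that $\pi^*\bigl(M(-D)\bigr) \cong \pi^*M\otimes \OO_Y(-\pi^*D)$, to an isomorphism of complexes
$$\pi_*\bigl(\DR(\E,\vartheta)\otimes \pi^*M \otimes \OO_Y(-\pi^*D)\bigr)^G \cong \DR(E,\theta) \otimes M(-D).$$
The Maschke-type averaging lemma already used in the proof of Theorem \ref{thm:nilvan} then identifies the target cohomology on $X$ with the $G$-invariants of the vanishing cohomology on $Y$.

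The main obstacle is the repackaging step on $Y$: the coefficients of $\pi^*\Delta$ generally lie outside $[0,1)$, so one must rewrite $\pi^*M(-\pi^*\Delta)$ as (a suitable integral twist of) a nef bundle minus a $\Q$-divisor on $\tilde D$ with coefficients in $[0,1)$ before the vanishing theorem on $Y$ can be invoked. Once this bookkeeping is carried out, everything else is parallel to the proof of Theorem \ref{thm:nilvan}.
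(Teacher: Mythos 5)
Your outline follows the paper's route (reduce to rational weights, pass to the Kawamata cover, apply the Le Potier refinement of the nilpotent vanishing theorem on $Y$, descend by $G$-invariants), but the step you yourself flag as ``the main obstacle'' is exactly where the proof lives, and the fix you sketch does not work. Writing $\pi^*\Delta=\lfloor\pi^*\Delta\rfloor+\{\pi^*\Delta\}$ and ``absorbing the integral divisor'' replaces $\pi^*M$ by $\pi^*M(-\lfloor\pi^*\Delta\rfloor)$, a nef bundle minus an effective divisor, which has no reason to remain nef; and even if the theorem on $Y$ applied to this modified bundle, its conclusion would concern $\DR(\E,\vartheta)$ twisted by that modified bundle, not by $\pi^*M$ itself, which is the twist you need in order to descend to $\DR(E,\theta)\otimes M(-D)$. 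The paper resolves this without touching the bundle, by shrinking the divisor instead: since $\pi^*M$ is nef and $\pi^*M^{\otimes m}(-\pi^*\Delta)=\bigl(\pi^*M(-\pi^*\Delta)\bigr)\otimes \pi^*M^{\otimes (m-1)}$ is ample for every $m>0$, so is its quotient $S^m(\pi^*M)(-\pi^*\Delta)$, hence the $\Q$-twist $\pi^*M(-\tfrac{1}{m}\pi^*\Delta)$ is ample for every $m$; for $m\gg 0$ the coefficients of $\tfrac{1}{m}\pi^*\Delta$ lie in $[0,1)$, and one applies \cite[theorem 3]{arapura} on $(Y,\tilde D)$ with this shrunken divisor. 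You need this scaling argument (or an equivalent one); as written, your proof has a gap at its load-bearing step.

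A secondary discrepancy: you state the vanishing on $Y$ and the descent with the twist $\OO_Y(-\pi^*D)$, but the vanishing theorem applied to the pair $(Y,\tilde D)$ yields vanishing for $\DR(\E,\vartheta)\otimes \pi^*M(-\tilde D)$, with the \emph{reduced} divisor. Since $\pi^*D-\tilde D$ is effective, these twists differ and vanishing does not transfer from one to the other. The paper therefore descends the $-\tilde D$ twist, using identities such as $(\pi_*\OO_Y(-\tilde D))^G=\OO_X(-D)$ in place of the bare projection formula for $\pi^*\OO_X(-D)$; your projection-formula computation should be adjusted accordingly.
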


\begin{proof}
This is essentially the same argument, and we explain the necessary
modifications.  We  may assume rational weights with  $\pi:Y\to X$ as above.
The hypotheses of the theorem implies that for all $m>0$,  $\pi^*M^{\otimes
  m}(-\pi^*\Delta)$, and therefore $S^m(\pi^*M)(-\pi^*\Delta)$ is ample for all $m>0$.
Thus $\pi^*M(-\frac{1}{m}\pi^*\Delta)$ is ample for all $m$. For $m\gg 0$, the
coefficients of $\frac{1}{m}\pi^*\Delta$ lie in $[0,1)$.
So we can apply \cite[theorem 3]{arapura} (in place of \cite[theorem 1]{arapura} above)
to conclude that
\begin{equation}
  \label{eq:vanDR}
\mathbb{H}^i(Y, \DR(\mathcal{E},\vartheta)(-D)\otimes \pi^*M)=0  
\end{equation}
The dual of \eqref{eq:piOmega} is
$$\Omega_Y^i(\log \tilde D)(-\tilde D)\cong \pi^*\Omega_X^i(\log  D)(-D)  $$
From this, we obtain
$$\pi_*(\DR(\mathcal{E},\vartheta)(-\tilde D)\otimes \pi^*M)^G\cong \DR(E,
\theta)(-D)\otimes M$$
Putting this together with \eqref{eq:vanDR} proves the theorem.
\end{proof}

\begin{cor}\label{vhsvan}
For a  Higgs bundle $(E^*, \theta)$ coming from a $\C$-PVHS 
we have 
$$\mathbb{H}^i(\DR(E, \theta)\otimes M(-D))=0$$
for $i>d$.
\end{cor}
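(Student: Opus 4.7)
The plan is to deduce this as a direct corollary of Theorem~\ref{thm:nilvan2} applied to the parabolic Higgs bundle $(E^*,\theta)$ attached to the $\C$-PVHS, in complete analogy with the way Corollary~\ref{vhsvan1} follows from Theorem~\ref{thm:nilvan}. What needs to be checked are the hypotheses of Theorem~\ref{thm:nilvan2}: that $(E^*,\theta)$ is $\mu_H$-semistable, has vanishing parabolic Chern classes, and has nilpotent Higgs field. All three are supplied by the preceding section. Proposition~\ref{vhsharmonic} identifies the parabolic structure on $E$ with the Deligne/nonabelian Hodge one, the Kobayashi-Hitchin correspondence (Theorem~\ref{khcorr}) then gives $\mu_H$-polystability and the vanishing of parabolic Chern classes, and the corollary immediately following Proposition~\ref{vhsharmonic} records that $\theta$ is nilpotent because it shifts the Hodge grading $E=\bigoplus_p E^p$ down by one.

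Once these hypotheses are in place, Theorem~\ref{thm:nilvan2} delivers $\mathbb{H}^i(X,\DR(E,\theta)\otimes M(-D))=0$ for $i\ge d+\rank M$. This already agrees with the claimed range $i>d$ when $M$ is a line bundle. For higher rank $M$, where the claimed bound is sharper than what Theorem~\ref{thm:nilvan2} produces directly, I would exploit the Hodge decomposition $E=\bigoplus_p E^p$ together with the constraint $\theta(E^p)\subseteq \Omega_X^1(\log D)\otimes E^{p-1}$. The ascending filtration $F_p=\bigoplus_{q\le p}E^q$ is then a filtration of $(E,\theta)$ by Higgs sub-bundles, with graded quotients $(E^p,0)$ in which the Higgs field vanishes. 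Each $E^p$ is itself a $\mu_H$-semistable parabolic bundle with vanishing parabolic Chern classes, since it is a summand of the polystable parabolic bundle $E^*$. An induction on the length of the Hodge filtration, using the long exact sequences of hypercohomology associated to $0\to \DR(F_{p-1},\theta)\to \DR(F_p,\theta)\to \DR(E^p,0)\to 0$, reduces the problem to showing $\mathbb{H}^i(X,\DR(E^p,0)\otimes M(-D))=0$ for $i>d$. Since the differential is zero, this unwinds to the Le Potier-Sommese type statement $H^q(X,\Omega_X^k(\log D)\otimes E^p\otimes M(-D))=0$ for $k+q>d$.

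The main obstacle is precisely this last Le Potier-Sommese vanishing, since a direct application of Theorem~\ref{thm:nilvan2} to the trivial Higgs pair $(E^p,0)$ gives only the weaker range $k+q\ge d+\rank M$. A natural route is to pull back along the projective bundle $\pi:\PP(M^\vee)\to X$, use that $\pi^*M$ admits a tautological line bundle quotient $\mathcal O_{\PP(M^\vee)}(1)$ which is relatively ample, and combine this with a Leray spectral sequence to reduce the rank-$r$ vanishing on $X$ to an ample line bundle vanishing on $\PP(M^\vee)$ to which Theorem~\ref{thm:nilvan} applies directly.
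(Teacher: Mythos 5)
Your first paragraph is, in substance, the paper's entire proof: Corollary \ref{vhsvan} is stated with no argument at all and is meant to follow from Theorem \ref{thm:nilvan2} exactly as Corollary \ref{vhsvan1} follows from Theorem \ref{thm:nilvan}, the hypotheses (semistability, vanishing parabolic Chern classes, nilpotency of $\theta$) being supplied by Proposition \ref{vhsharmonic}, Theorem \ref{khcorr}, and the corollary following Proposition \ref{vhsharmonic}. The range discrepancy you correctly spotted --- Theorem \ref{thm:nilvan2} gives $i\ge d+\rank M$ while the corollary asserts $i>d$ --- is a defect in the paper's statement, not something your argument can repair: for $\rank M\ge 2$ the asserted range is simply false. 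Take $D=\emptyset$, the trivial $\C$-PVHS $(E,\theta)=(\cO_X,0)$ on $X=\PP^2$, and $M=T_{\PP^2}$ (nef, with $M(-\Delta)=M$ ample since $\Delta$ must be $0$); then $\mathbb{H}^3(\DR(E,0)\otimes M)\supseteq H^1(\PP^2,\omega_{\PP^2}\otimes T_{\PP^2})\cong H^1(\PP^2,\Omega^1_{\PP^2})^\vee\ne 0$, although $3>d=2$. Le Potier's bound $p+q\ge d+\rank M$ is sharp, so the intended statement must be $i\ge d+\rank M$ (consistently, Theorem \ref{stablevan} only claims vanishing for $i>d+\rank M$).

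Consequently the second and third paragraphs of your proposal pursue an unprovable strengthening, and the individual steps also fail. The Le Potier--Sommese statement you reduce to, namely $H^q(X,\Omega_X^k(\log D)\otimes E^p\otimes M(-D))=0$ for $k+q>d$, is exactly what the example above contradicts. The projective-bundle trick cannot rescue it: $\dim\PP(M^\vee)=d+\rank M-1$, so applying the line-bundle case upstairs and pushing down returns precisely the range $i\ge d+\rank M$, never $i>d$. In addition, your intermediate claim that each Hodge graded piece $E^p$ is $\mu_H$-semistable with vanishing parabolic Chern classes is false in general: polystability of $(E^*,\theta)$ as a \emph{Higgs} bundle says nothing about the summands $E^p$, which are not $\theta$-invariant, and for the weight-one VHS of a family of curves the smallest Hodge bundle has positive degree. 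The correct course is to keep only your first paragraph and read the corollary with the range $i\ge d+\rank M$.
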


\begin{rmk}\label{rmk:2implies1}
  By taking $M=L(D)$ with $L$ an  ample line bundle, we see
  that theorem \ref{thm:nilvan2} implies theorem \ref{thm:nilvan} by
  choosing $\Delta= (1-\epsilon)D$, with $0<\epsilon\ll
  1$. Nevertheless, it seemed  clearer to state  and prove them
  separately. 
\end{rmk}

\section{Vanishing theorem: general case }

In this section, we will use the results in previous sections to prove
the vanishing theorem for semistable parabolic Higgs bundles with
vanishing parabolic Chern classes. 

\begin{lemma}\label{lemma:sstos}
Let $M$ be a vector bundle on $X$. 
Let $(E_1^*, \theta_1)\sim (E_2^*,\theta_2)$ be equivalent $p$-semistable
bundles with $(E_1, \theta_1)$ $p$-polystable (a direct sum of
$p$-stable bundles).
If 
$$\mathbb{H}^i(\DR(E_1,\theta_1)\otimes M)=0$$
then 
$$\mathbb{H}^i(\DR(E_2,\theta_2)\otimes M)=0$$

\end{lemma}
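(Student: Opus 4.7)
The plan is to unpack the equivalence relation and then reduce everything to short exact sequences of $\DR$-complexes via Jordan--Hölder filtrations. First, recall that $(E_1^*,\theta_1)\sim (E_2^*,\theta_2)$ means that $Gr^W_*(E_1^*,\theta_1)\cong Gr^{W'}_*(E_2^*,\theta_2)\otimes \mathcal{L}$ for Jordan--Hölder filtrations $W,W'$ and a line bundle $\mathcal{L}$; since we are working over a point, $\mathcal{L}$ is trivial, so we have an isomorphism of parabolic Higgs bundles $Gr^W(E_1^*,\theta_1)\cong Gr^{W'}(E_2^*,\theta_2)$. The hypothesis that $(E_1,\theta_1)$ is $p$-polystable means it is already a direct sum of its Jordan--Hölder factors, so $(E_1,\theta_1)\cong \bigoplus_j W_j'/W_{j-1}'$ as Higgs bundles, where $0=W_0'\subset W_1'\subset\cdots\subset W_k'=(E_2^*,\theta_2)$ is the Jordan--Hölder filtration of $(E_2^*,\theta_2)$.

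Next I would exploit the short exact sequences
\[
0\to W_{j-1}'\to W_j'\to W_j'/W_{j-1}'\to 0
\]
of parabolic Higgs bundles. Because each $W_j'$ is $\theta_2$-stable, these give short exact sequences of $\DR$-complexes; tensoring with the locally free sheaf $M$ preserves exactness, so we obtain long exact sequences in hypercohomology. Since $\DR$ commutes with finite direct sums, the decomposition of $(E_1,\theta_1)$ gives
\[
\mathbb{H}^i(\DR(E_1,\theta_1)\otimes M)\;\cong\;\bigoplus_j \mathbb{H}^i(\DR(W_j'/W_{j-1}',\theta_2|)\otimes M),
\]
so the hypothesis forces $\mathbb{H}^i(\DR(W_j'/W_{j-1}',\theta_2|)\otimes M)=0$ for every factor.

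Now I would induct on $j$. The base case $W_0'=0$ is trivial. For the inductive step, the three-term exact fragment
\[
\mathbb{H}^i(\DR(W_{j-1}',\theta_2|)\otimes M)\to \mathbb{H}^i(\DR(W_j',\theta_2|)\otimes M)\to \mathbb{H}^i(\DR(W_j'/W_{j-1}',\theta_2|)\otimes M)
\]
has both outer terms vanishing (the left by induction, the right by the paragraph above), so the middle term vanishes. After $k$ steps we arrive at $\mathbb{H}^i(\DR(E_2,\theta_2)\otimes M)=0$, which is the desired conclusion.

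The only genuine subtlety is identifying $(E_1,\theta_1)$ with $\bigoplus W_j'/W_{j-1}'$ as Higgs bundles: one must verify that the equivalence relation $\sim$, which a priori is a relation on $\sim$-classes in a moduli space, actually produces an isomorphism of the underlying Higgs bundles (modulo the trivial line bundle in the absolute setting). Once that identification is in place the remainder is formal homological algebra, as the short exact sequences of Higgs bundles and the flatness of $M$ do all the work.
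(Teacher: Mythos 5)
Your proposal is correct and follows essentially the same route as the paper: identify $(E_1,\theta_1)$ with the associated graded of a Jordan--H\"older filtration on $(E_2,\theta_2)$ (using polystability of $E_1$ and triviality of the line bundle over a point), deduce vanishing for each graded piece since $\DR$ commutes with direct sums, and then climb the filtration via the short exact sequences of $\DR$-complexes tensored with $M$. The paper's proof is just a terser version of exactly this induction.
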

\begin{proof}
The assumptions say that $(E_1,\theta_1)\cong Gr^J_*(E_2,\theta_2)$, where
 $J$ is a  Jordan-H\"older filtration on $(E_2,\theta_2)$, and that
$$\mathbb{H}^i(\DR(Gr^{J}_*(E_2,\theta_2))\otimes M)=0$$
The conclusion follows easily from the exact sequences
$$0\to \DR(J_{i-1}(E_2, \theta_2))\to \DR(J_{i}(E_2,\theta_2))\to \DR(Gr_i^J(E_2,\theta_2))\to 0$$
and induction.
\end{proof}

The following is the main theorem.

\begin{thm} \label{stablevan}
Let $(E_*, \theta)$ be a $\mu_H$-semistable parabolic Higgs bundle on $(X,D)$
with vanishing parabolic Chern classes.
Let $M$ be a nef  vector bundle on $X$ such that $M(-\Delta)$ is ample
for some $\Q$-divisor supported on $D$ with coefficients  in $[0,1)$.
Then
$$\mathbb{H}^i(X,\DR(E,\theta)\otimes M(-D))=0$$ for $i>d+\rank M$.
\end{thm}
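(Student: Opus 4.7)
The plan is to reduce theorem \ref{stablevan} to the nilpotent case (theorem \ref{thm:nilvan2}) by deforming $(E^*, \theta)$ along the natural $\mathbb{C}^*$-action on Yokogawa's moduli space of parabolic Higgs bundles and invoking upper semicontinuity of cohomology.

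First, by proposition \ref{prop:ration} I would replace $(E^*, \theta)$ with a $\mu_H$-semistable parabolic Higgs bundle $(E'^*, \theta)$ having the same underlying Higgs bundle and rational weights. Since $\DR(E, \theta) \otimes M(-D)$ depends only on $(E, \theta)$, vanishing for $(E'^*, \theta)$ suffices. With rational weights and vanishing parabolic Chern classes, $(E'^*, \theta)$ is $p$-semistable (by comparing Hilbert polynomial coefficients), hence determines a point of the Yokogawa moduli space $\overline{M}(X, D, \Gamma)$ for an appropriate $\Gamma$. The $\mathbb{C}^*$-action $(F^*, \phi) \mapsto (F^*, t\phi)$ on $\overline{M}$ is equivariant with respect to the Hitchin map, with the scaling on $\mathfrak{V}$ contracting to $0$. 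Since $\mathfrak{h}$ is projective (theorem \ref{hitchin}), the orbit through $[(E'^*, \theta)]$ extends to a morphism $\mathbb{A}^1 \to \overline{M}$ whose image at $0$ lies in the nilpotent cone $\mathfrak{h}^{-1}(0)$ (remark \ref{rmk:nilp}). Lifting this to a morphism from a smooth curve $C$ into $R^0$ (remark \ref{rmk:Mpt}) and pulling back the semi-universal sheaf yields a flat family $\mathcal{F}$ of $p$-semistable parabolic Higgs sheaves on $X \times C$ with nilpotent Higgs field on the central fiber $\mathcal{F}_0$ and with generic fibers $S$-equivalent to $(E'^*, \theta)$.

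Next I would apply upper semicontinuity of cohomology to $R\pi_*(\DR(\mathcal{F}) \otimes M(-D))$, where $\pi: X \times C \to C$. The polystable representative of the $S$-equivalence class of $\mathcal{F}_0$ is a direct sum of $\mu_H$-stable parabolic Higgs bundles with vanishing parabolic Chern classes and nilpotent Higgs fields, so theorem \ref{thm:nilvan2} applies summand by summand; lemma \ref{lemma:sstos} then transfers the vanishing to $\mathcal{F}_0$ itself. Upper semicontinuity therefore forces vanishing in a Zariski neighborhood of the central point of $C$. Moreover, for $t \neq 0$ the rescaling $\alpha_p = t^{-p} \cdot \mathrm{id}$ on $\Omega_X^p(\log D) \otimes E$ defines an isomorphism of complexes $\DR(F, t\phi) \cong \DR(F, \phi)$, so the cohomology dimension function is constant on $C \setminus \{0\}$. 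Combined with the vanishing near $0$, this forces $\mathbb{H}^i$ to vanish on a generic fiber, and a final application of lemma \ref{lemma:sstos} transfers the vanishing to $(E'^*, \theta)$.

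The main obstacle will be the moduli-theoretic step: constructing a flat family $\mathcal{F}$ on $X \times C$ realizing the $\mathbb{C}^*$-limit to a nilpotent, semistable fiber. The naive family $(E'^*, t\theta)$ over $\mathbb{A}^1$ is insufficient since its $t = 0$ fiber typically fails to be semistable. One must extract a usable family from Yokogawa's semi-universal sheaf, using projectivity of the Hitchin map to compactify the orbit and carefully track how $S$-equivalence classes evolve along $C$. Once this is in place, the interplay of upper semicontinuity with $\mathbb{C}^*$-rescaling of the de Rham complex closes the argument.
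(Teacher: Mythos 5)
Your overall strategy is exactly the paper's: rational weights via Proposition \ref{prop:ration}, Yokogawa's moduli space, the $\C^*$-orbit compactified through the proper Hitchin map, a lift to a curve in $R^0$ carrying the semi-universal family, the nilpotent/PVHS case at the limit point, and upper semicontinuity plus the rescaling isomorphism $\DR(E,t\theta)\cong\DR(E,\theta)$. But there is a genuine gap: you never reduce to the $\mu_H$-\emph{stable} case, and the paper's first step (``by lemmas \ref{lemma:sstos} and \ref{twostable}, it suffices to assume $(E^*,\theta)$ is $\mu_H$-stable'') is doing real work. It matters twice. First, $\mu_H$-semistability does \emph{not} imply $p$-semistability (only the stable implication of Lemma \ref{twostable} is available: when par-$\mu_H(V^*)=\text{par-}\mu_H(E^*)$ the comparison of lower-order Hilbert polynomial coefficients can go either way), so your $(E'^*,\theta)$ need not even define a point of $\overline{M}(X,D,\Gamma)$. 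Second, and more seriously, closed points of $\overline{M}$ are only $S$-equivalence classes, so for $q\in C\setminus\{0\}$ the fiber $\mathcal{F}_q$ of the semi-universal family is merely $S$-equivalent to $(E'^*,t\theta)$. Your ``final application of lemma \ref{lemma:sstos}'' then runs the lemma backwards: Lemma \ref{lemma:sstos} transfers vanishing \emph{from} the $p$-polystable representative \emph{to} an $S$-equivalent semistable bundle (its proof is the long exact sequence for the Jordan--H\"older filtration, which only goes that direction), so knowing $\mathbb{H}^i(\DR(\mathcal{F}_q)\otimes M(-D))=0$ does not give vanishing for $(E'^*,t\theta)$. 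The same problem undermines your claim that the cohomology dimension is constant on $C\setminus\{0\}$, which implicitly identifies distinct fibers up to rescaling of $\theta$.

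The fix is the paper's: apply Lemma \ref{lemma:sstos} at the very start to replace $(E^*,\theta)$ by the graded pieces of a Jordan--H\"older filtration, which are $\mu_H$-stable (one must also know their parabolic Chern classes vanish), hence $p$-stable by Lemma \ref{twostable}. For a stable bundle the moduli point determines the isomorphism class, so $(\mathcal{E}^*,\vartheta)_q\cong(E^*,t\theta)$ for $q$ over $t\neq 0$ and the transfer back is immediate. Your treatment of the central fiber is a legitimate minor variant: where the paper invokes Mochizuki's result that the $\C^*$-fixed limit comes from a $\C$-PVHS and then Corollary \ref{vhsvan}, you propose applying Theorem \ref{thm:nilvan2} directly to the stable summands of the polystable representative of $\mathcal{F}_0$; this works provided you verify that those summands are $\mu_H$-semistable (true, since $p$-stable implies $\mu_H$-semistable by the leading-coefficient comparison) and have vanishing parabolic Chern classes, a point that deserves an explicit argument in either version.
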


\begin{proof} By lemmas \ref{lemma:sstos} and \ref{twostable}, it suffices assume that
  $(E^*, \theta)$ is  $\mu_H$-stable.
By proposition \ref{prop:ration}, there is no loss of generality in assuming that
$(E^*, \theta)$ has rational weights $0<\alpha_1<\alpha_2<...<\alpha_l<1$ 
Let $m= \rank E$, and $E\supsetneq F^1(E)\supsetneq ...\supsetneq
 F^l(E)\supsetneq E(-D)$ denote the quasi-parabolic structure.
 Denote the Hilbert polynomials of $E$ with respect to $H$ by $P$, 
and the Hilbert polynomials of $E/{F^i(E)}$ by $P_i$, respectively,
and let $\Gamma= (m, \alpha_i, P, P_i)$.
Then by theorem \ref{moduli}, the isomorphism class $(E^*, \theta)$
can be regarded as a closed point, which is denoted by $p$, in the
moduli space 
${M}(X, D,\Gamma)\subset \overline{{M}}(X, D,\Gamma)$ described
earlier.  Now we consider the Hitchin map $\mathfrak{h}:
\overline{{M}}(X, D,\Gamma)\to\mathfrak{V}(X, m)$.
 We can assume that $\mathfrak{h}(p)\neq 0$, otherwise $\theta$ is
 nilpotent and we are done by theorem \ref{thm:nilvan}. 
Let $\C\cdot \mathfrak{h}(p)$ be the complex affine line passing
through $0$ and $\mathfrak{h}(p)$ in $\mathfrak{V}(X, m)$. Also, 
considering the $\C^*$-action
$$t: \overline{M}(X, D,\Gamma)\to \overline{{M}}(X, D,\Gamma)$$ 
$$(E^*, \theta)\mapsto (E^*, t\theta)$$ 
we will get a $\C^*$-orbit $C_p$ of the point $p$ as a curve in 
$\overline{{M}}(X, D,\Gamma)$. By properness of the Hitchin map, 
i.e., theorem \ref{hitchin}, we can extend the curve $C_p$ to
$\overline{C}_p$ by adding a point 
$p_0$ in the fiber $\overline{{M}}(X, D,\Gamma)_0$ over 0 of the Hitchin fibration. 

Now by earlier  remarks, we get the following commutative diagram

$$
\xymatrix{
\widetilde{C}_p \ar[d] \ar[r] &{{R}^0}(X, D,\Gamma)\ar[d]^{pr} & \tilde{p} \ar@{|->}[d]\\ 
\overline{C}_p \ar[d] \ar[r] &\overline{{M}}(X, D,\Gamma)\ar[d]^{\mathfrak{h}} & p \ar@{|->}[d]\\
\C\cdot \mathfrak{h}(p) \ar@{^{(}->}[r]&\mathfrak{V}(X, m) &\mathfrak{h}(p)}
$$
 where $\widetilde{C}_p$ is some curve in  ${R}^0(X, D,\Gamma)$
 mapping finitely to $\overline{C}_p$. Let
 $\tilde{p},\tilde{p}_0\in \widetilde{C}_p$ lie over  $p$ and $p_0$ respectively.
Now pullback the semi-universal parabolic bundle to the curve
$\widetilde{C}_p$ 
to obtain a parabolic bundle 
$(\mathcal{E^*, \vartheta})$ over $X\times \widetilde{C}_p$ which
is flat over $\widetilde{C}_p$. 
Note that for any point $q \in \widetilde{C}_p$ not lying over $p_0$,
the parabolic Higgs bundle $(\mathcal{E}^*, \vartheta)_{q}$ is
equivalent (under $\sim$) to $(E^*, t\theta)$, for some $t\in \C^*$, and is therefore stable.
Consequently, $(\mathcal{E}^*, \vartheta)_{q}\cong (E^*, t\theta)$.

Now consider the parabolic Higgs bundle $({E_0}^*, \theta_0)$ 
 over $X$ corresponding to ${p}_0$. 
This is  a fixed point for
the $\C^*$-action, so   by Mochizuki \cite[proposition
 1.9]{moch1} it must, in fact, come from  a $\C$-PVHS.  
Hence by corollary \ref{vhsvan}, we have 
$$\mathbb{H}^i(\DR(({E_0}, \theta_0))\otimes M(-D))=0, \
\textnormal{for}\ i>d.$$  
By proposition \ref{vhsharmonic} and theorem \ref{khcorr}, 
we have that $({E_0}^*, \theta_0)$ is $\mu_H$-polystable. Hence it is
parabolic $p$-polystable by 
lemma \ref{twostable}. Since $(\mathcal{E}^*,
\vartheta)_{\tilde{p}_0}\sim ({E_0}^*, \theta_0)$, lemma
\ref{lemma:sstos} and corollary~\ref{vhsvan} implies 
$$\mathbb{H}^i(\DR((\mathcal{E}, \vartheta)_{\tilde{p}_0})\otimes M(-D))=0, \ \textnormal{for}\ i>d.$$
Since
$$q\mapsto \dim \mathbb{H}^i(\DR((\mathcal{E}, \vartheta)_{q})\otimes
M(-D)) $$
is upper semi-continuous,
$$\mathbb{H}^i(\DR((\mathcal{E},\vartheta)_{q})\otimes M(-D))=0$$
for $i>d$, and $q$ in a small open nighborhood of $\tilde{p}_0$ in $\widetilde{C}_p$. Thus we get
$$\mathbb{H}^i(\DR(E, t\theta)\otimes M(-D))=0$$
for $i>d$ and $t$ small enough. This implies 
$$\mathbb{H}^i(\DR(E, \theta)\otimes M(-D))=0$$
for $i>d$.

\end{proof}

\begin{cor}\label{cor:main}
  If $L$ be an ample line bundle over $X$,
then 
$$\mathbb{H}^i(\DR(E, \theta)\otimes L)=0$$
for $i>d$.
\end{cor}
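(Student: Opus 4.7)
The plan is to apply Theorem \ref{stablevan} with $M := L(D)$, in exact parallel with Remark \ref{rmk:2implies1} (which deduced Theorem \ref{thm:nilvan} from Theorem \ref{thm:nilvan2} by this substitution). The key identity is $M(-D) \cong L$, so the conclusion $\mathbb{H}^i(X, \DR(E,\theta) \otimes M(-D)) = 0$ furnished by Theorem \ref{stablevan} translates directly into $\mathbb{H}^i(X, \DR(E,\theta) \otimes L) = 0$.

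To invoke Theorem \ref{stablevan}, I choose the $\Q$-divisor $\Delta := (1-\epsilon) D$ for a fixed $0 < \epsilon \ll 1$. This is supported on $D$ with coefficients $1-\epsilon \in [0, 1)$. A short computation gives
\[
M(-\Delta) \;=\; L(D) - (1-\epsilon) D \;=\; L + \epsilon D,
\]
which is ample as a $\Q$-line bundle because $L$ lies in the interior of the ample cone of $\mathrm{NS}(X)_{\Q}$ and $\epsilon D$ is a small perturbation. The remaining hypothesis is that $M = L(D)$ is nef. Granting this, Theorem \ref{stablevan} applied to our rank-one $M$ yields the claimed vanishing $\mathbb{H}^i(X, \DR(E,\theta) \otimes L) = 0$ for $i > d$.

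The proof is essentially a mechanical substitution into the more flexible framework of Theorem \ref{stablevan}; the ampleness $L$ trick of Kodaira is recovered as the special case $M = L(D)$, $\Delta = (1-\epsilon) D$ of the Kawamata--Viehweg-type data $(M, \Delta)$. The one step warranting care is the nef verification for $L(D)$, which is the main technical observation; everything else reduces to matching notations between the corollary and the theorem.
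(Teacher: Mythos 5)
Your reduction is exactly the paper's: its proof of this corollary is a one-line appeal to remark \ref{rmk:2implies1}, i.e.\ the substitution $M=L(D)$, $\Delta=(1-\epsilon)D$ into theorem \ref{stablevan}, and your verification that $M(-\Delta)=L(\epsilon D)$ is ample for $0<\epsilon\ll 1$ is correct since ampleness is an open condition. So as far as strategy goes you have reproduced the intended argument.

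However, the one step you defer --- nefness of $M=L(D)$ --- is not a routine verification that can be ``granted'': it fails in general. An ample line bundle twisted by an effective divisor need not be nef. For instance, on a Hirzebruch surface $\mathbb{F}_n$ with negative section $C_0$ (so $C_0^2=-n$) and fiber class $f$, take $D=C_0$ and $L=\mathcal{O}(C_0+(n+1)f)$, which is ample; then $(L+D)\cdot C_0=-2n+(n+1)=1-n<0$ for $n\ge 2$, so $L(D)$ is not nef. Since nefness of $M$ is genuinely used in the proof of theorem \ref{thm:nilvan2} (it is what keeps $M^{\otimes m}(-\Delta)$ ample for all $m$ before invoking \cite{arapura}), the deduction cannot be completed as you (and remark \ref{rmk:2implies1} of the paper) state it. The robust route to the corollary is not to funnel the ample case through the nef statement, but to rerun the proof of theorem \ref{stablevan} verbatim with the ample twist $L$, substituting theorem \ref{thm:nilvan} and corollary \ref{vhsvan1} for theorem \ref{thm:nilvan2} and corollary \ref{vhsvan}; the moduli/semicontinuity argument there is indifferent to which coherent twist appears, and the paper itself gestures at this variant in remark \ref{rmk:reductiontoVHS}. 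A smaller point: theorem \ref{stablevan} as literally stated gives vanishing only for $i>d+\rank M=d+1$, so even granting nefness your substitution would yield $i\ge d+2$ rather than $i>d$; the proof of that theorem actually establishes $i>d$, so this is an indexing slip, but it should be flagged rather than silently absorbed.
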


\begin{proof}
This follows from remark \ref{rmk:2implies1}.
  
\end{proof}

\begin{rmk}\label{rmk:reductiontoVHS}
A second proof of this corollary can be obtained  by redoing the proof of the theorem
with corollary \ref{vhsvan1} in place of corollary \ref{vhsvan}.
Although this still ultimately hinges on
\cite[theorem 1]{arapura} which was proved by characteristic $p$
methods, it  is possible to give an entirely characteristic
$0$ proof with the above trick as follows. The deformation argument used
above  shows that it suffices to prove
$$\mathbb{H}^i(\DR(E,\theta)\otimes L)=0, \quad i>d$$
 when $(E,\theta)$ comes from a $\C$-PVHS
$H$. Since $H\oplus \bar H$ is an $\R$-PVHS, we are reduced
to proving vanishing in this case. This can be done {\em in principle} by adapting
Schnell's proof of Saito's vanishing \cite{schnell} to the category of pure $\R$-Hodge
modules introduced in \cite{saito}.
\end{rmk}

\section{Semipositivity}

As an application of the vanishing theorem, we can obtain a
semipositivity theorem in the spirit the 
Fujita-Kawamata theorem. In fact, it
was inspired by the fairly recent semipositivity results of  Brunebarbe \cite{brun}
and  Popa-Schnell \cite[theorem
47]{popa}.

Given two parabolic Higgs bundles $(E^*,\theta)$ and $(G^*, \psi)$, their tensor product
becomes a parabolic bundle with filtration
$$(E^*\otimes G^*)^\alpha = \sum_{\beta+ \gamma=\alpha}
E^\beta\otimes G^\gamma$$
and Higgs field $\theta\otimes I_G + I_E\otimes \psi$. It is possible
for $(E^*\otimes G^*)^0\supsetneqq E\otimes G$. However,  there is an evident
criterion for equality.

\begin{lemma}\label{lemma:tensor}
  We have $(E\otimes G)^0= E\otimes G$ if the only solution to
  $\beta+\gamma=0$ is $\beta=\gamma=0$, where $\beta$ and $\gamma$ are weights of $E$ and $G$.
\end{lemma}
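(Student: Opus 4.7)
The inclusion $E \otimes G \subseteq (E \otimes G)^0$ is immediate: the pair $(\beta, \gamma) = (0, 0)$ appears in the indexing set of the defining sum and contributes the summand $E^0 \otimes G^0 = E \otimes G$.

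For the reverse inclusion, my plan is to reduce to parabolic line bundles and check the statement by bookkeeping on the weight data. By Definition~\ref{df:paravect}, we may pass to a Zariski open cover $\{U_k\}$ of $X$ on which each of $E$ and $G$ splits as a direct sum of parabolic line bundles. Since tensor product distributes over direct sums and the parabolic filtration on a direct sum is the direct sum of the individual filtrations, it suffices to check $(L^* \otimes M^*)^0 = L \otimes M$ for a pair of parabolic line bundles with normalized weight vectors $\beta^L, \gamma^M \in \mathrm{Hom}(\Comp(D), \R)$.

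For a single parabolic line bundle $L^*$, the filtration $L^\alpha$ is piecewise constant in $\alpha$, with jumps on $D_i$ exactly at $\alpha \in \beta^L_i + \Z$; each constant piece has the form $L(-\sum_i n_i D_i)$ for integers $n_i$ determined by the interval. Similarly $M^{-\alpha}$ has jumps at $-\alpha \in \gamma^M_i + \Z$, with constant pieces $M(-\sum_i m_i D_i)$. The defining sum $(L \otimes M)^0 = \sum_\alpha L^\alpha \otimes M^{-\alpha}$ therefore takes only finitely many distinct values, each of the form $(L \otimes M)(-\sum_i (n_i + m_i)D_i)$, and such a summand is contained in $L \otimes M$ precisely when $n_i + m_i \ge 0$ on every component $D_i$.

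The heart of the argument --- and what I expect to be the main obstacle --- is to show that any $\alpha$ for which $n_i + m_i < 0$ on some $D_i$ forces a pair of honest weights $\omega$ of $L$ and $\omega'$ of $M$ with $\omega + \omega' = 0$ and $(\omega, \omega') \neq (0, 0)$, contradicting the hypothesis. Concretely, a negative total exponent at $D_i$ records the simultaneous event that $\alpha$ lies past a weight $\omega = \beta^L_i + a$ of $L$ at $D_i$ and $-\alpha$ lies past a weight $\omega' = \gamma^M_i + b$ of $M$ at $D_i$ in a way whose shift contributions fail to cancel, so that matching endpoints of the jump intervals produces the forbidden weight pair summing to zero. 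Granting this combinatorial step, every summand $L^\alpha \otimes M^{-\alpha}$ lies inside $L \otimes M$, and reassembling the local direct sum decompositions gives $(E \otimes G)^0 = E \otimes G$ globally on $X$.
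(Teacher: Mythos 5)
The paper gives no proof of this lemma at all---it is asserted as an ``evident criterion'' just before the statement---so there is no argument of the authors to compare yours against. Your overall plan (use Definition~\ref{df:paravect} to split locally into parabolic line bundles, then do floor-function bookkeeping on the exponents along each $D_i$) is certainly the intended route, and both your forward inclusion and the reduction to a pair of parabolic line bundles are fine.

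The difficulty is that the step you defer with ``Granting this combinatorial step'' is the entire content of the lemma, and the implication as you state it---a negative total exponent at some $D_i$ forces a pair of weights summing to zero---is false, so the gap cannot be closed by more careful bookkeeping. Take $D$ irreducible and $L^*=M^*$ a parabolic line bundle with single normalized weight $\beta=\gamma=\tfrac23$. The weights of $L$ and of $M$ are both $\tfrac23+\Z$, so any sum of a weight of $L$ and a weight of $M$ lies in $\tfrac43+\Z$ and never equals $0$: the hypothesis holds vacuously. But $\tfrac12$ and $0$ both lie strictly between the consecutive weights $-\tfrac13$ and $\tfrac23$ of $L$, so $L^{1/2}=L^{0}=L$ by axioms P1 and P4, while $-\tfrac12$ and $-1$ both lie strictly between the consecutive weights $-\tfrac43$ and $-\tfrac13$ of $M$, so $M^{-1/2}=M^{-1}=M(D)$ by axioms P2 and P4. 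Hence
$$(L^*\otimes M^*)^0\ \supseteq\ L^{1/2}\otimes M^{-1/2}\ =\ (L\otimes M)(D)\ \supsetneq\ L\otimes M,$$
independently of any endpoint convention. (This is consistent with additivity of $\textnormal{par-}c_1$: the tensor product has normalized weight $\tfrac13$ and underlying bundle $(L\otimes M)(D)$.) What your exponent count actually needs at each component is that $\beta+\gamma<1$ for every normalized weight $\beta$ of $E$ and $\gamma$ of $G$ at that component (apart from $\beta=\gamma=0$); under that condition the computation does close, since with the formula of example~\ref{ex:paraline} the exponent is $\lfloor 1+\alpha-\beta\rfloor+\lfloor 1-\alpha-\gamma\rfloor\ge\lfloor 2-\beta-\gamma\rfloor-1\ge 0$. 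The hypothesis as printed only excludes $\beta+\gamma=1$, not $\beta+\gamma\in(1,2)$, so you should either prove the lemma under the corrected hypothesis or flag the discrepancy rather than attribute the remaining work to a ``combinatorial step.''
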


We can also define the symmetric powers of $(E^*,\theta)$ as a
quotient of the tensor power by the symmetric group.

\begin{cor}\label{cor:sym}
  Suppose that the weights of $(E^*,\theta)$ satisfy 
$$\alpha_{i_1}+\ldots +\alpha_{i_n}=0 \Rightarrow \alpha_{i_1}=\ldots=
\alpha_{i_n}=0$$
Then $S^n(E^*)^0= S^n(E)$.
\end{cor}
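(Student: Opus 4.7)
The plan is to reduce the statement about symmetric powers to the tensor-power statement $(E^{\otimes n})^0 = E^{\otimes n}$ and then apply Lemma~\ref{lemma:tensor} once. Since $S^n(E^*)$ is defined as the quotient of $E^{\otimes n}$ by the symmetric group $\Sigma_n$, and the parabolic filtration on the quotient is the image of the one upstairs, once $(E^{\otimes n})^0 = E^{\otimes n}$ is established we obtain $S^n(E^*)^0$ as the image of $E^{\otimes n}$ in $S^n(E)$, which is all of $S^n(E)$.

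To prove $(E^{\otimes n})^0 = E^{\otimes n}$, I would write $E^{\otimes n} = E^{\otimes (n-1)} \otimes E$ and apply Lemma~\ref{lemma:tensor} to this pair. Iterating the definition of the tensor-product parabolic structure, the weights of $E^{\otimes(n-1)}$ lie among the $(n-1)$-fold sums $\alpha_{i_1}+\cdots+\alpha_{i_{n-1}}$ of weights of $E$, since $Gr^{\beta}(E^{\otimes(n-1)})$ receives contributions only from pieces $Gr^{\alpha_{i_1}}E \otimes \cdots \otimes Gr^{\alpha_{i_{n-1}}}E$ with $\sum_j \alpha_{i_j}=\beta$. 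Given such a weight $\beta=\alpha_{i_1}+\cdots+\alpha_{i_{n-1}}$ of $E^{\otimes (n-1)}$ and a weight $\gamma=\alpha_{i_n}$ of $E$ satisfying $\beta+\gamma=0$, the hypothesis of the corollary applied to the $n$-fold sum $\alpha_{i_1}+\cdots+\alpha_{i_n}=0$ forces every $\alpha_{i_j}$ to vanish, so $\beta=\gamma=0$. Lemma~\ref{lemma:tensor} then yields the desired equality.

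The only (mild) obstacle is the bookkeeping above: verifying that the weights appearing in the iterated tensor product really are $(n-1)$-fold sums of the original weights. This is a direct consequence of associativity of the tensor-product parabolic structure together with the description of its associated graded. No further input is required; Lemma~\ref{lemma:tensor} does the work in one application, and the passage to $\Sigma_n$-coinvariants at the end is automatic.
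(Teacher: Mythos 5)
Your argument is correct and is exactly the route the paper intends: the corollary is stated without proof as an immediate consequence of Lemma~\ref{lemma:tensor} and the definition of $S^n(E^*)$ as the $\Sigma_n$-quotient of the tensor power, and your write-up just supplies the omitted bookkeeping (weights of $E^{\otimes(n-1)}$ are $(n-1)$-fold sums of weights of $E$, so the hypothesis on $n$-fold sums feeds directly into one application of the lemma, and surjectivity onto the quotient finishes it). The only caveat, shared equally by the paper's own phrasing, is that the conditions ``$\beta+\gamma=0$'' and ``$\alpha_{i_1}+\cdots+\alpha_{i_n}=0$'' should be read modulo $\Z$ for normalized weights (equivalently, on the full integer-translation-invariant weight sets), since otherwise they are vacuous for weights in $[0,1)$.
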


Given two tame harmonic bundles their tensor product carries a tame
harmonic metric, and this is compatible with tensor products of the
parabolic Higgs bundles. These facts are summarized in the proof of
\cite[corollary 5.18]{moch2}, although the details appear in \cite{moch15}.
Therefore by combining this with  theorem \ref{khcorr}, we obtain:

\begin{prop}\label{prop:tensor}
  If $(E^*,\theta)$ and $(G^*, \psi)$ are $\mu_H$-polystable Higgs
  bundles with trivial
  parabolic Chern classes, then their tensor product and
  symmetric powers have the same properties.
\end{prop}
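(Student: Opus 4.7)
The strategy is to transport the question across the Kobayashi--Hitchin correspondence, where tensor and symmetric operations behave transparently. Given $\mu_H$-polystable parabolic Higgs bundles $(E^*,\theta)$ and $(G^*,\psi)$ with trivial parabolic Chern classes, theorem~\ref{khcorr} produces tame harmonic bundles $(H_E,\mathcal{D}_E,K_E)$ and $(H_G,\mathcal{D}_G,K_G)$ over $U=X-D$ corresponding to them. I would then invoke the fact, cited in the paragraph preceding the statement, that the tensor product of two tame harmonic bundles, equipped with the connection $\mathcal{D}_E\otimes I + I\otimes \mathcal{D}_G$ and the product metric $K_E\otimes K_G$, is again a tame harmonic bundle; the corresponding parabolic Higgs bundle is, by Mochizuki's compatibility result, canonically isomorphic to $(E^*\otimes G^*,\theta\otimes I+I\otimes \psi)$ as defined in this section.

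Applying the correspondence in the reverse direction then shows $(E^*\otimes G^*,\theta\otimes I+I\otimes \psi)$ is itself $\mu_H$-polystable with vanishing parabolic Chern classes. This settles the claim for tensor products.

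For the symmetric power $S^n(E^*)$, I would deduce it from the tensor case by observing that in characteristic zero the symmetric power is a direct summand of the $n$-fold tensor power via the symmetrization projector, which is a morphism of parabolic Higgs bundles (it commutes with the Higgs field since $\theta\otimes I+I\otimes \theta$ is symmetric in the tensor factors). By iteration of the tensor case, $(E^*)^{\otimes n}$ is $\mu_H$-polystable with trivial parabolic Chern classes; and any direct summand of a polystable bundle is polystable, while parabolic Chern classes are additive under direct sums, so they vanish on the summand $S^n(E^*)$ as well.

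The only real content lies in invoking Mochizuki's compatibility between the tensor operation on harmonic bundles and on parabolic Higgs bundles, which is essentially the main obstacle and is disposed of by citation to \cite{moch2} and \cite{moch15}; the rest is formal. Note that the corollary~\ref{cor:sym} discussion showing $S^n(E)^0 = S^n(E)$ under a weight-positivity hypothesis is not needed here, since the parabolic structure on $S^n(E^*)$ is defined intrinsically as a quotient of the tensor power structure, and it is this intrinsic structure whose polystability is asserted.
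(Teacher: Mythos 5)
Your treatment of the tensor product is exactly the paper's argument: the paper's ``proof'' consists of the paragraph preceding the proposition, which cites Mochizuki (\cite{moch2}, \cite{moch15}) for the fact that the tensor product of tame harmonic bundles is tame harmonic compatibly with the tensor product of the associated parabolic Higgs bundles, and then invokes theorem~\ref{khcorr} in both directions. That part of your proposal is correct and needs no further comment.

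The symmetric-power step, however, contains a genuine gap as written. You deduce the vanishing of the parabolic Chern classes of $S^n(E^*)$ from the claim that ``parabolic Chern classes are additive under direct sums.'' They are not: the total Chern class is multiplicative, $c(A\oplus B)=c(A)\,c(B)$, and it is the Chern \emph{character} that is additive. Neither version lets you conclude vanishing on a direct summand from vanishing on the whole: $c(A)c(B)=1$ does not force $c(A)=1$, and $\mathrm{ch}(A)+\mathrm{ch}(B)=\rank A+\rank B$ does not force $\mathrm{ch}(A)=\rank A$ (take $A=\cO(1)$, $B=\cO(-1)$ on a curve). So the summand argument, by itself, does not yield triviality of $\parch_i(S^nE^*)$. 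The polystability of the summand is fine (semistable Higgs sheaves of a fixed slope form an abelian category in which polystable means semisimple, and summands of semisimple objects are semisimple), but the Chern class claim needs a different justification. Two repairs are available: (i) argue via the splitting principle of section~\ref{sect:chern} that if all parabolic Chern classes of $E^*$ vanish then so do those of any Schur functor of $E^*$, since the parabolic Chern roots of $S^nE^*$ are sums of those of $E^*$; or (ii) avoid the summand argument entirely and note, as the paper implicitly does, that the symmetric power of the tame harmonic bundle is again tame harmonic and corresponds to $S^n(E^*)$ under Mochizuki's compatibility, so theorem~\ref{khcorr} delivers polystability \emph{and} vanishing of the parabolic Chern classes in one stroke. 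Your closing remark that corollary~\ref{cor:sym} is not needed here is correct; that corollary is only used later, in theorem~\ref{thm:semipos}, to guarantee $S^n(E^*)^0=S^n(E)$ after perturbing the weights.
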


\begin{thm}\label{thm:semipos}
  Suppose that $(E,\theta)$ is a $\mu_H$-polystable parabolic Higgs
  bundle on $(X,D)$ with vanishing parabolic Chern classes, and that there is a decomposition
  $E=E_+\oplus E_-$ 
  such that $\theta(E)\subseteq \Omega_X^1(\log D)\otimes E_-$.  
Let $L$ be a nef  line bundle on $X$ such that $L(-\Delta)$ is ample
for some $\Q$-divisor supported on $D$ with coefficients  in $[0,1)$.
Then
$$H^i(X,\omega_X\otimes E_+\otimes L)=0$$
for $i>0$. Furthermore, $E_+$ is nef.
\end{thm}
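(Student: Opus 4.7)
The plan is to first establish the cohomology vanishing $H^i(X, \omega_X \otimes E_+ \otimes L) = 0$ and then deduce nefness of $E_+$ by applying that vanishing to all symmetric powers. Write $d = \dim X$, $K = \DR(E, \theta) \otimes L(-D)$, $K_- = \DR(E_-, \theta|_{E_-}) \otimes L(-D)$, and $K_+ = \DR(E_+, 0) \otimes L(-D)$. The assumption $\theta(E) \subseteq \Omega_X^1(\log D) \otimes E_-$ makes $(E_-, \theta|_{E_-})$ a Higgs subbundle of $(E, \theta)$ whose quotient is the Higgs bundle $(E_+, 0)$ with zero Higgs field, giving a short exact sequence of complexes $0 \to K_- \to K \to K_+ \to 0$. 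Because the induced differential on $K_+$ vanishes, $K_+$ splits in the derived category as $\bigoplus_{p=0}^d \Omega_X^p(\log D)(-D) \otimes E_+ \otimes L\,[-p]$.

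The connecting morphism $\delta \colon \mathbb{H}^n(K_+) \to \mathbb{H}^{n+1}(K_-)$, restricted to the $p$th direct summand $H^{n-p}(X, \Omega_X^p(\log D)(-D) \otimes E_+ \otimes L)$, is induced (via the standard lift-and-differentiate recipe) by the component $\theta\colon K_+^p \to K_-^{p+1}$ of the Higgs field. For $p = d$ the target $K_-^{d+1}$ is zero since $\Omega_X^{d+1}(\log D) = 0$, so $\delta$ kills the top summand $H^i(X, \omega_X \otimes E_+ \otimes L)$ of $\mathbb{H}^{d+i}(K_+)$, using $\Omega_X^d(\log D) \otimes \OO_X(-D) \cong \omega_X$. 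By exactness of the long exact sequence, this summand lies in the image of $\mathbb{H}^{d+i}(K) \to \mathbb{H}^{d+i}(K_+)$. Applying Theorem \ref{stablevan} to the $\mu_H$-semistable bundle $(E, \theta)$ with $M = L$ gives $\mathbb{H}^{d+i}(K) = 0$ for $i > 0$, and the vanishing $H^i(X, \omega_X \otimes E_+ \otimes L) = 0$ follows.

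For nefness, Proposition \ref{prop:tensor} ensures that $(S^n E, \theta^{(n)})$ is again $\mu_H$-polystable with trivial parabolic Chern classes. The natural decomposition $S^n E = S^n E_+ \oplus Q_n$, where $Q_n$ collects all symmetric summands containing at least one factor of $E_-$, satisfies $\theta^{(n)}(S^n E) \subseteq \Omega_X^1(\log D) \otimes Q_n$. Applying the first part to $(S^n E, \theta^{(n)})$ yields $H^i(X, \omega_X \otimes S^n E_+ \otimes L) = 0$ for all $i > 0$, all $n \geq 0$, and every $L$ in the allowed class; in particular this holds for every ample line bundle by taking $\Delta = 0$. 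A standard Castelnuovo--Mumford regularity argument with respect to a fixed very ample line bundle then shows that $S^n E_+ \otimes A$ is globally generated for some fixed ample $A$ and every $n \geq 0$, which means $\OO_{\PP(E_+)}(n) \otimes p^*A$ is globally generated for all $n$. Hence $\OO_{\PP(E_+)}(1) = \lim_{n\to\infty}\tfrac{1}{n}\bigl(\OO_{\PP(E_+)}(n) \otimes p^*A\bigr)$ is nef, and therefore $E_+$ is nef.

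The main obstacle I anticipate is the precise identification of the connecting homomorphism $\delta$ on each summand of $K_+$: one has to trace the boundary operator of the distinguished triangle carefully through the derived-category splitting of $K_+$ and confirm that on the $p$th summand it is simply the map induced by $\theta\colon K_+^p \to K_-^{p+1}$. Once this identification is in hand, the vanishing of the target for $p = d$ is immediate, and the subsequent steps (including the Castelnuovo--Mumford deduction of nefness) are routine.
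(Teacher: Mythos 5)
Your first part is correct and is essentially the paper's own argument in a more roundabout form. The paper simply observes that, since the last differential $\theta\colon \Omega_X^{d-1}(\log D)\otimes E \to \Omega_X^{d}(\log D)\otimes E$ has image inside $\Omega_X^{d}(\log D)\otimes E_-$, the term $\Omega_X^{d}(\log D)\otimes E_+[-d]$ is a direct summand \emph{of the complex} $\DR(E,\theta)$ itself: the sheaf-level splitting of the degree-$d$ term is already a splitting of complexes because $d$ is the top degree. Hence $H^i(X,\omega_X\otimes E_+\otimes L)$ is a direct summand of $\mathbb{H}^{d+i}(X,\DR(E,\theta)\otimes L(-D))$ and theorem \ref{stablevan} finishes. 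Your long-exact-sequence route reaches the same conclusion, and the connecting-map identification you flag as the main obstacle is indeed harmless on the top summand for exactly this reason; but the worry evaporates once you notice the summand splits off at the level of complexes, so there is no boundary map to analyze.

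The genuine gap is in the nefness step. You invoke Proposition \ref{prop:tensor} and then ``the first part'' for $(S^nE,\theta^{(n)})$. But Proposition \ref{prop:tensor} concerns the \emph{parabolic} symmetric power $S^n(E^*)$, whose underlying bundle is $(S^n(E^*))^0=\sum_{\beta_1+\cdots+\beta_n\le 0}E^{\beta_1}\cdots E^{\beta_n}$, and the paper explicitly warns that this can be strictly larger than $S^n(E)$. The polystability and Chern-class vanishing furnished by Proposition \ref{prop:tensor}, and therefore the vanishing theorem, apply to $(S^n(E^*))^0$, not to $S^nE$; moreover your decomposition $S^nE=S^nE_+\oplus Q_n$ need not extend to $(S^n(E^*))^0$, because the parabolic filtration is not assumed compatible with $E=E_+\oplus E_-$. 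This is precisely why the paper first replaces $E^*$ by an $\epsilon$-close parabolic Higgs bundle with \emph{generic} weights (no nontrivial relations $\alpha_{i_1}+\cdots+\alpha_{i_n}=0$), so that Corollary \ref{cor:sym} yields $(S^n(E^*))^0=S^n(E)$ for every $n$, while Lemma \ref{lemma:perturbwts} keeps the perturbed bundle semistable. Your proposal skips this perturbation entirely, so the assertion $H^i(X,\omega_X\otimes S^nE_+\otimes L)=0$ is not justified as written. The closing Castelnuovo--Mumford/global-generation argument is fine and is exactly what the cited \cite[lemma 3.1]{arapura} packages.
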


\begin{proof}
  The assumptions imply that
$$\omega_X\otimes E_+\subseteq \DR(E,\theta)$$
is a direct summand.  Therefore, there is an injection
$$H^i(X,\omega_X\otimes E_+\otimes L)\to
\mathbb{H}^i(X,\DR(E,\theta)\otimes L)$$
Therefore Theorem \ref{stablevan} implies the vanishing statement.

For some $0<\epsilon \ll 1$, we can replace $E$ with an 
$\epsilon$-close parabolic  Higgs bundle with generic 
weights.  Specifically, generic means that the conditions of corollary
\ref{cor:sym} hold for all $n$.
This ensures that
$S^n(E^*)^0= S^n(E)$. Proposition \ref{prop:tensor} imply
that this is $\mu_H$-polystable with trivial parabolic Chern classes.
Furthermore, we get a decomposition as above with $(S^nE)_+ = S^n(E_+)$.
Therefore
$$H^i(X,\omega_X\otimes S^n(E_+)\otimes L)=0$$
for all $n>0$ by the first part of the theorem.
Now apply \cite[lemma 3.1]{arapura} to conclude that $E_+$ is nef.
\end{proof}

We refer to  Viehweg \cite{viehweg} for the definition and basic properties of
weak positivity.

\begin{cor}
Let $V_o$ be $\C$-PVHS on a smooth Zariski open subset $U$ of a projective
variety $Z$. Then  the smallest Hodge bundle $F^{\max } V_o$ extends to
a  torsion free sheaf  $F$  over $Z$ which is weakly positive over $U$. 
\end{cor}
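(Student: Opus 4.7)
The plan is to reduce the statement to theorem \ref{thm:semipos} on a log resolution of $Z$ and then descend back to $Z$.

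First, by Hironaka I would choose a birational morphism $\mu:X\to Z$ with $X$ smooth projective, $\mu$ an isomorphism over $U$, and $D:=(\mu^{-1}(Z\setminus U))_{\text{red}}$ a simple normal crossing divisor. Since $\mu$ is an isomorphism over $U$, the $\C$-PVHS $V_o$ may be regarded as being defined on $\mu^{-1}(U)\subset X$. Proposition \ref{vhsharmonic} together with the corollary following it then produce a parabolic Higgs bundle $(E^*,\theta)$ on $(X,D)$ which, by theorem \ref{khcorr}, is $\mu_H$-polystable (for any ample polarization $H$ on $X$) with vanishing parabolic Chern classes.

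Second, exploit the Hodge grading $E=\bigoplus_p\Gr_F^p V^0$ from \eqref{eq:Fmax}. Setting $E_+:=F^{\max}V^0=\Gr_F^{p_{\max}}V^0$ and letting $E_-$ be the sum of the remaining graded pieces, one obtains a splitting $E=E_+\oplus E_-$ with $\theta(E)\subseteq \Omega_X^1(\log D)\otimes E_-$, because $\theta$ shifts the Hodge grading by $-1$. Taking any ample line bundle $L$ on $X$ together with $\Delta=\epsilon D$ for $0<\epsilon\ll 1$, all hypotheses of theorem \ref{thm:semipos} are met, and that theorem yields that $E_+$ is nef on $X$.

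Third, set $F:=\mu_*E_+$. This is a torsion free coherent sheaf on $Z$ (being the direct image of a locally free sheaf under a proper morphism), and since $\mu$ is an isomorphism over $U$ one has $F|_U\cong F^{\max}V_o$. To conclude weak positivity of $F$ over $U$, I would argue: given an ample line bundle $A$ on $Z$ and an integer $\alpha>0$, nefness of $E_+$ combined with the bigness of $\mu^*A$ yields that $S^{\alpha\beta}(E_+)\otimes \mu^*A^{\otimes\beta}$ is generically globally generated for $\beta\gg 0$; pushing down via $\mu$ and using that $\mu$ is an isomorphism over $U$, one obtains global generation of $\hat S^{\alpha\beta}(F)\otimes A^{\otimes\beta}$ at every point of $U$, which is precisely Viehweg's \cite{viehweg} definition of weak positivity.

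The main obstacle is this last descent step: converting nefness of $E_+$ on the log resolution $X$ into weak positivity of $F=\mu_*E_+$ on the possibly singular base $Z$. It requires comparing $\mu_*S^n(E_+)$ with $\hat S^n(\mu_*E_+)$, a comparison which is unproblematic over the open locus $U$ where $\mu$ is an isomorphism, and which is the sort of compatibility that is standard in Viehweg's weak positivity machinery.
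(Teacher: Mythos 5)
Your proposal follows essentially the same route as the paper: resolve $Z$ by a log resolution that is an isomorphism over $U$, apply theorem \ref{thm:semipos} to the Hodge-graded decomposition $E=E_+\oplus E_-$ supplied by the $\C$-PVHS to conclude that $E_+=F^{\max}V$ is nef, and push forward to $Z$. The only difference is in the final descent step, which the paper disposes of by simply citing \cite[lemma 1.4]{viehweg}; your sketch of that step is the right idea but slightly imprecise, since Viehweg's definition of weak positivity over $U$ requires global generation of $\hat S^{\alpha\beta}(F)\otimes A^{\otimes\beta}$ at \emph{every} point of $U$, not merely generic global generation upstairs.
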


\begin{proof}
  We can choose a resolution of singularities $p:X\to Z$ which is an
  isomorphism over $U$ and such that $D=X-U$ has simple normal
  crossings. We have an extension $F^{\max } V =Gr^{\max }_FV$
  described  in \eqref{eq:Fmax}, which is nef by the above theorem.
Set $F= p_*F^{\max } V$. This is weakly positive over $U$ by \cite[lemma 1.4]{viehweg}.
\end{proof}

\begin{rmk}
  One can see that $F$ is independent of the choice of resolution.
\end{rmk}

\end{document}